\newtheorem{thm}{Theorem}
\newtheorem{cor}{Corollary}
\newtheorem{lem}{Lemma}[section]
\newtheorem{prop}[lem]{Proposition}
\newtheorem{rem}{Remark}
\newcommand{\C}{\mathbb{C}}
\newcommand{\HH}{\mathbb{H}}
\newcommand{\R}{\mathbb{R}}
\newcommand{\beqt}{\begin{equation}}  \newcommand{\eeqt}{\end{equation}}
\newcommand{\bal}{\begin{align}}      \newcommand{\eal}{\end{align}}
\newcommand{\ba}{\begin{array}}      \newcommand{\ea}{\end{array}}
\newcommand{\bc}{\begin{center}}     \newcommand{\ec}{\end{center}}
\newcommand{\be}{\begin{enumerate}}  \newcommand{\ee}{\end{enumerate}}
\newcommand{\beq}{\begin{eqnarray}}  \newcommand{\eeq}{\end{eqnarray}}
\newcommand{\beQ}{\begin{eqnarray*}} \newcommand{\eeQ}{\end{eqnarray*}}
\newcommand{\bi}{\begin{itemize}}    \newcommand{\ei}{\end{itemize}}
\newcommand{\bt}{\begin{tabular}}    \newcommand{\et}{\end{tabular}}
\begin{document}
\title{Spinorial representation of submanifolds in metric Lie groups}
\author{Pierre Bayard, Julien Roth and Berenice Zavala Jim\'enez}

\maketitle

\begin{abstract}
In this paper we give a spinorial representation of submanifolds of any dimension and codimension into Lie groups equipped with left invariant metrics. As applications, we get a spinorial proof of the Fundamental Theorem for submanifolds into Lie groups, we recover previously known representations of submanifolds in $\R^n$ and in the 3-dimensional Lie groups $S^3$ and $E(\kappa,\tau),$ and we get a new spinorial representation for surfaces in the 3-dimensional semi-direct products: this achieves the spinorial representations of surfaces in the 3-dimensional homogeneous spaces. We finally indicate how to recover a Weierstrass-type representation for CMC-surfaces in 3-dimensional metric Lie groups recently given by Meeks, Mira, Perez and Ros.
\end{abstract}
\noindent
{\it Keywords:} Spin geometry, metric Lie groups, isometric immersions, Weierstrass representation.\\\\
\noindent
{\it 2010 Mathematics Subject Classification:} 53C27, 53C40.

\date{}
\maketitle\pagenumbering{arabic}
\section{Introduction}

The purpose of this paper is to give a spinorial representation of an isometric immersion of a Riemannian manifold $M$ into a Lie group $G$ equipped with a left invariant metric. The result is roughly the following: if $M$ is a simply connected Riemannian manifold, $E$ is a real vector bundle on $M$ equipped with a fibre metric and a compatible connection, and $B:TM\times TM\rightarrow E$ is bilinear and symmetric, then an isometric immersion of $M$ into $G$ with normal bundle $E$ and second fundamental form $B$ is equivalent to a spinor field $\varphi$ solution of a Killing-type equation on $M;$ the spinor bundle is constructed from the Clifford algebra of the metric Lie algebra $\mathcal{G}$ of the group, and the immersion is explicitly obtained by the integration of a $\mathcal{G}$-valued 1-form on $M$ defined in terms of the spinor field $\varphi.$ A precise statement with the suitable necessary hypotheses is given in Section \ref{section main result} of the paper.

The explicit representation formula of the immersion in terms of the spinor field may be considered as a generalized Weierstrass representation formula for manifolds into metric Lie groups. 

We then give some applications of this result. We first obtain an easy proof of a theorem by Piccione and Tausk \cite{PT}: under suitable hypotheses, the necessary equations of Gauss, Codazzi and Ricci are also sufficient to obtain an immersion of a simply connected manifold into a metric Lie group. We then show how our general result permits to recover the known spinorial representation for submanifolds in $\R^n$ \cite{BLR2}, and also obtain a new spinorial representation for submanifolds in $\HH^n$ considered as a metric Lie group. We finally study more precisely the case of surfaces in a 3-dimensional metric Lie group:  we recover the known spinorial representations in $S^3$ \cite{Mo} and $E(\kappa,\tau)$ \cite{Ro}, and obtain a new spinorial representation of surfaces in a general semi-direct product; this especially includes the cases of surfaces into the groups $Sol_3$ and $\HH^2\times\R,$ which achieves the spinorial representations of surfaces into the 3-dimensional homogeneous spaces initiated in \cite{Fr,Mo,Ro}. We also deduce alternative proofs of the Fundamental Theorems for surfaces in $E(\kappa,\tau)$ by Daniel \cite{Da} and in $Sol_3$ by Lodovici \cite{Lod}. We finish the paper showing how the general spinorial representation formula permits to recover the recent Weierstrass-type representation formula by Meeks, Mira, Perez and Ros \cite[Theorem 3.12]{MP} concerning constant mean curvature surfaces in 3-dimensional metric Lie groups.

The main result of the paper thus gives a general framework for a variety of Weierstrass-type representation formulas existing in the literature, and is also a tool to get representation formulas in new contexts. 

We quote the following related papers: Friedrich obtained in \cite{Fr} a geometric spinorial representation of a surface in $\R^3$ showing that a surface in $\R^3$ may be represented by a constant spinor field of $\R^3$ restricted to the surface; this result was then extended to $S^3$ and $\HH^3$ by Morel \cite{Mo} and to other 3-dimensional homogeneous spaces by Roth \cite{Ro}. It was then extended by Bayard, Lawn and Roth to surfaces in dimension 4 \cite{BLR} and afterwards to manifolds in $\R^n$ \cite{BLR2}. Spinorial representation were also studied in pseudo-Riemannian spaces, by Lawn in $\R^{2,1}$ \cite{La}, Lawn and Roth in 3-dimensional Lorentzian space forms \cite{LR}, Bayard in $\R^{3,1}$ \cite{Ba}, Bayard and Patty \cite{BP} and Patty \cite{Pa} in $\R^{2,2}.$ Close to the purpose of the paper, Berdinskii and Taimanov gave in \cite{BT} a spinorial representation for a surface in a 3-dimensional metric Lie group. 

The outline of the paper is as follows: Section \ref{sec preliminaries} is dedicated to preliminaries concerning notation and spin geometry of a submanifold in a metric Lie group, Section \ref{section main result} to the statement and the proof of the main theorem, and Section \ref{section fundamental theorem} to a spinorial proof of the Fundamental Theorem for submanifolds in a metric Lie group. We then give further applications in Section \ref{section special cases}: we study the cases of a submanifold in $\R^n$ and $\HH^n,$ and of a hypersurface in a general metric Lie group, specifying further to the cases of a surface in $S^3,$ $E(\kappa,\tau)$ and a semi-direct product, as $Sol_3$ and $\HH^2\times\R.$ We finally consider the case of a CMC-surface in a 3-dimensional metric Lie group. An appendix ends the paper concerning the links between the Clifford product and some natural operations on skew-symmetric operators.
\section{Preliminaries}\label{sec preliminaries}
\subsection{Notations}
Let $G$ be a Lie group, endowed with a left invariant metric $\langle.,.\rangle$, and $\mathcal{G}$ its Lie algebra: $\mathcal{G}$ is the space of the left invariant vector fields on $G$, equipped with the Lie bracket $[.,.]$ and is identified to the linear space tangent to $G$ at the identity. We consider the Maurer-Cartan form $\omega_G\in\Omega^1(G,\mathcal{G})$ defined by
\begin{equation}\label{def maurer-cartan}
\omega_G(v)={L_{g^{-1}}}_*(v)\hspace{.5cm}\in\ \mathcal{G}
\end{equation}
for all $v\in T_gG,$ where $L_{g^{-1}}$ denotes the left multiplication by $g^{-1}$ on $G$ and ${L_{g^{-1}}}_*:T_gG\rightarrow\mathcal{G}$ is its differential. This form induces a bundle isomorphism
\begin{eqnarray}\label{TG trivial}
TG&\rightarrow&G\times\mathcal{G}\\
(g,v)&\mapsto& (g,\omega_G(v)).\nonumber
\end{eqnarray}
which preserves the fibre metrics. We note that a vector field $X\in\Gamma(TG)$ is left invariant if, by (\ref{TG trivial}), $X:G\rightarrow\mathcal{G}$ is a constant map. Let us consider the Levi-Civita connection $\nabla^G$ of $(G,\langle.,.\rangle)$ and the linear map
\begin{eqnarray*}
\Gamma:\hspace{.5cm}\mathcal{G}&\rightarrow&\Lambda^2\mathcal{G}\\
X&\mapsto&\Gamma(X)
\end{eqnarray*}
such that, for all $X,Y\in\mathcal{G}$
\begin{equation}\label{def Gamma}
\nabla^G_XY=\Gamma(X)(Y).
\end{equation}
By the Koszul formula, $\Gamma$ is determined by the metric as follows: for all $X,Y,Z\in\mathcal{G},$ 
\begin{equation}\label{koszul formula}
\langle\Gamma(X)(Y),Z\rangle=\frac{1}{2}\langle[X,Y],Z\rangle+\frac{1}{2}\langle[Z,X],Y\rangle-\frac{1}{2}\langle[Y,Z],X\rangle.
\end{equation}
Since $\nabla^G$ is without torsion, we have, for all $X,Y\in\mathcal{G},$
\begin{equation}\label{nablaG without torsion}
\Gamma(X)(Y)-\Gamma(Y)(X)=[X,Y].
\end{equation}
We note that the curvature of $\nabla^G$ is given by
\begin{equation}\label{curvature nablaG}
R^G(X,Y)=\left[\Gamma(X),\Gamma(Y)\right]-\Gamma([X,Y])\hspace{.5cm}\in\Lambda^2\mathcal{G}
\end{equation}
for all $X,Y\in\mathcal{G}.$ In the formula the first brackets stand for the commutator of the endomorphisms.
\subsection{The spinor bundle of $G$}
Let us denote by $Cl(\mathcal{G})$ the Clifford algebra of $\mathcal{G}$ with its scalar product, and let us consider the representation
\begin{eqnarray*}
\rho:\hspace{.5cm} Spin(\mathcal{G})&\rightarrow& GL(Cl(\mathcal{G}))\\
a&\mapsto& \xi\mapsto a\xi.
\end{eqnarray*}
This representation is a real representation and is not irreducible in general: it is a sum of irreducible representations \cite{Lou}. By (\ref{TG trivial}) the principal bundle $Q_G$ of the positively oriented and orthonormal frames of $G$ is trivial
$$Q_G\simeq G\times SO(\mathcal{G}),$$
and we may consider the trivial spin structure
$$\tilde Q_G:= G\times Spin(\mathcal{G})$$
and the corresponding spinor bundle 
$$\Sigma:=\tilde Q_G\times_\rho Cl(G)\simeq G\times Cl(\mathcal{G}).$$
We will say that a spinor field $\varphi\in\Gamma(\Sigma)$ is \emph{left invariant} if it is constant as a map $G\rightarrow Cl(\mathcal{G}).$ The covariant derivative of a left invariant spinor field is
\begin{equation}\label{nabla spinor invariant}
\nabla^G_X\varphi=\frac{1}{2}\Gamma(X)\cdot\varphi
\end{equation}
where $\Gamma(X)\in\Lambda^2\mathcal{G}\subset Cl(\mathcal{G})$ and the dot "$\cdot$" stands for the Clifford product.

\subsection{The spin representation of $Spin(p)\times Spin(q)$}
Let us assume that $p+q=n,$ and fix an orthonormal basis $e^o_1,e^o_2,\ldots,e^o_n$ of $\mathcal{G};$ this gives a splitting $\mathcal{G}=\R^p\oplus\R^q$ (the first factor corresponds to the first $p$ vectors, and the second factor to the last $q$ vectors of the basis) and a natural map
$$Spin(p)\times Spin(q)\rightarrow Spin(\mathcal{G}),\hspace{1cm}(a_p,a_q)\mapsto a:=a_p\cdot a_q$$
associated to the isomorphism
$$Cl(\mathcal{G})=Cl_p\hat{\otimes} Cl_q.$$
We thus also have a representation, still denoted by $\rho,$
\begin{eqnarray}
\rho:\hspace{.5cm} Spin(p)\times Spin(q)&\rightarrow& GL(Cl(\mathcal{G}))\label{rep spin p spin q}\\
(a_p,a_q)&\mapsto& \xi\mapsto a\xi.\nonumber
\end{eqnarray}
\subsection{The twisted spinor bundle}\label{section twisted spinor bundle}
We consider a $p$-dimensional Riemannian manifold $M$ and a bundle $E\rightarrow M$ of rank $q,$ with a fibre metric and a compatible connection. We assume that $E$ and $TM$ are oriented and spin, with given spin structures
$$\tilde{Q}_M\stackrel{2:1}{\rightarrow} Q_M\hspace{.5cm}\mbox{and}\hspace{.5cm}\tilde{Q}_E\stackrel{2:1}{\rightarrow} Q_E$$
where $Q_M$ and $Q_E$ are the bundles of positively oriented orthonormal frames of $TM$ and $E,$ and we set
$$\tilde{Q}:=\tilde{Q}_M\times_M \tilde{Q}_E;$$
this is a $Spin(p)\times Spin(q)$ principal bundle. We define
$$\Sigma:=\tilde{Q}\times_{\rho} Cl(\mathcal{G})$$
and 
$$U\Sigma:=\tilde{Q}\times_{\rho} Spin(\mathcal{G})\hspace{.5cm}\subset\ \Sigma$$
where $\rho$ is the representation (\ref{rep spin p spin q}). Similarly to the usual construction in spin geometry, if we consider the representation
$$Ad:\hspace{.5cm}Spin(p)\times Spin(q)\rightarrow Spin(\mathcal{G})\stackrel{2:1}{\rightarrow} SO(\mathcal{G})\rightarrow GL(Cl(\mathcal{G}))$$
and the Clifford bundle
$$Cl(TM\oplus E)=\tilde{Q}\times_{Ad}Cl(\mathcal{G}),$$
there is a Clifford action of $Cl(TM\oplus E)$ on $\Sigma;$ this action will be denoted below by a dot $"\cdot".$ The vector bundle $\Sigma$ is moreover equipped with the covariant derivative $\nabla$ naturally associated to the spinorial connections on $\tilde{Q}_M$ and $\tilde{Q}_E.$ Let us denote by $\tau:Cl(\mathcal{G})\rightarrow Cl(\mathcal{G})$ the anti-automorphism of $Cl(\mathcal{G})$ such that
$$\tau(x_1\cdot x_2\cdots x_k)=x_k\cdots x_2\cdot x_1$$ 
for all $x_1,x_2,\ldots, x_k\in\mathcal{G},$ and set
\begin{eqnarray}
\langle\langle.,.\rangle\rangle:\hspace{.5cm}Cl(\mathcal{G})\times Cl(\mathcal{G})&\rightarrow& Cl(\mathcal{G})\label{def brackets 1}\\
(\xi,\xi')&\mapsto& \tau(\xi')\xi.\nonumber
\end{eqnarray}
This map is $Spin(\mathcal{G})-$invariant: for all $\xi,\xi'\in Cl(\mathcal{G})$ and $g\in Spin(\mathcal{G})$ we have
$$\langle\langle g\xi,g\xi'\rangle\rangle=\tau(g\xi')g\xi= \tau(\xi')\tau(g)g\xi= \tau(\xi')\xi=\langle\langle \xi,\xi'\rangle\rangle,$$
since $Spin(\mathcal{G})\subset \{g\in Cl^0(\mathcal{G}):\ \tau(g)g=1\};$ this map thus induces a $Cl(\mathcal{G})-$valued map
\begin{eqnarray}
\langle\langle.,.\rangle\rangle:\hspace{.5cm}\Sigma\times \Sigma&\rightarrow& Cl(\mathcal{G})\label{def brackets 2}\\
(\varphi,\varphi')&\mapsto& \langle\langle[\varphi],[\varphi']\rangle\rangle\nonumber
\end{eqnarray}
where $[\varphi]$ and $[\varphi']\in Cl(\mathcal{G})$ represent $\varphi$ and $\varphi'$ in some spinorial frame $\tilde{s}\in\tilde{Q}.$
\begin{lem}
The map $\langle\langle.,.\rangle\rangle:$ $\Sigma\times \Sigma\rightarrow Cl(\mathcal{G})$ satisfies the following properties: for all $\varphi,\psi\in\Gamma(\Sigma)$ and $X\in \Gamma(TM\oplus E),$
\begin{equation}\label{scalar product property1}
\langle\langle \varphi,\psi\rangle\rangle=\tau\langle\langle\psi,\varphi\rangle\rangle
\end{equation}
and
\begin{equation}\label{scalar product property2}
\langle\langle X\cdot\varphi,\psi\rangle\rangle=\langle\langle\varphi,X\cdot \psi\rangle\rangle.
\end{equation}
\end{lem}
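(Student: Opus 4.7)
The plan is to carry out both verifications on representatives in $Cl(\mathcal{G})$. The computation preceding the lemma already established that the bracket $\langle\langle\cdot,\cdot\rangle\rangle$ is $Spin(\mathcal{G})$-invariant, so any identity that holds at the level of a fixed spinorial frame $\tilde{s}\in\tilde{Q}$ descends automatically to $\Sigma$; it is therefore enough to work at the algebraic level in $Cl(\mathcal{G})$.

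For (\ref{scalar product property1}) I would simply apply $\tau$ to $\langle\langle\psi,\varphi\rangle\rangle=\tau([\varphi])[\psi]$. Because $\tau$ is an involutive anti-automorphism of $Cl(\mathcal{G})$, this reverses the product and cancels the outer $\tau$, yielding $\tau([\psi])[\varphi]=\langle\langle\varphi,\psi\rangle\rangle$. This step is immediate.

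For (\ref{scalar product property2}) I would represent $X\in\Gamma(TM\oplus E)$ by an element $[X]\in\mathcal{G}\subset Cl(\mathcal{G})$ in the frame $\tilde{s}$, using that the Clifford action on $\Sigma$ is realized on representatives as left Clifford multiplication, so $X\cdot\varphi$ is represented by $[X][\varphi]$. Then
\[
\langle\langle X\cdot\varphi,\psi\rangle\rangle=\tau([\psi])[X][\varphi], \qquad \langle\langle\varphi,X\cdot\psi\rangle\rangle=\tau([X][\psi])[\varphi]=\tau([\psi])\tau([X])[\varphi],
\]
and these agree because $\tau$ fixes degree-one elements: $\tau([X])=[X]$.

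The main (minor) obstacle is the identification step used in (\ref{scalar product property2}): one must check, with the conventions coming from the $Ad$-representation together with the splitting $Cl(\mathcal{G})=Cl_p\hat\otimes Cl_q$, that the Clifford action of a vector $X\in TM\oplus E$ on a spinor corresponds on representatives to left Clifford multiplication by $[X]\in\mathcal{G}$, with no stray sign coming from the graded tensor product. Once this identification is in hand, both properties follow directly from the identities $\tau(ab)=\tau(b)\tau(a)$, $\tau^2=\iid$ and $\tau|_{\mathcal{G}}=\iid$.
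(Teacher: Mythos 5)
Your proof is correct and is essentially the same as the paper's: both identities are verified on representatives in a fixed spinorial frame, using that $\tau$ is an anti-automorphism fixing $\mathcal{G}$, with well-definedness guaranteed by the $Spin(\mathcal{G})$-invariance established just before the lemma. The identification of the Clifford action with left multiplication by $[X]$ on representatives is exactly the convention the paper uses, so no further work is needed there.
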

\begin{proof}
We have
$$\langle\langle \varphi,\psi\rangle\rangle=\tau[\psi]\ [\varphi]=\tau(\tau[\varphi]\ [\psi])=\tau\langle\langle\psi,\varphi\rangle\rangle$$
and
$$\langle\langle X\cdot\varphi,\psi\rangle\rangle=\tau[\psi]\ [X][\varphi]=\tau([X][\psi])[\varphi]=\langle\langle\varphi,X\cdot \psi\rangle\rangle$$
where $[\varphi],$ $[\psi]$ and $[X]\ \in Cl(\mathcal{G})$ represent $\varphi,$ $\psi$ and $X$ in some given frame $\tilde{s}\in\tilde{Q}.$
\end{proof}
\begin{lem}
The connection $\nabla$ is compatible with the product $\langle\langle.,.\rangle\rangle:$
$$\partial_X\langle\langle\varphi,\varphi'\rangle\rangle=\langle\langle\nabla_X\varphi,\varphi'\rangle\rangle+\langle\langle\varphi,\nabla_X\varphi'\rangle\rangle$$
for all $\varphi,\varphi'\in\Gamma(\Sigma)$ and $X\in\Gamma(TM).$ 
\end{lem}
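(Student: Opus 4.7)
The plan is to verify the identity in a local spinorial frame. Fix $\tilde s\in\Gamma(\tilde Q)$ over some open set of $M$, and represent $\varphi,\varphi'$ by the $Cl(\mathcal{G})$-valued functions $[\varphi],[\varphi']$ in this frame. In this trivialization, the spinorial connection takes the standard form
\[
\nabla_X\varphi \;\longleftrightarrow\; \partial_X[\varphi] + \omega(X)\cdot[\varphi],
\]
where $\omega$ is the connection $1$-form on $\tilde Q=\tilde Q_M\times_M\tilde Q_E$, valued in $\mathfrak{spin}(p)\oplus\mathfrak{spin}(q)\subset\Lambda^2\mathcal{G}\subset Cl(\mathcal{G})$, obtained from the Levi-Civita connection of $TM$ and the compatible connection of $E$ via the usual lift.

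The key algebraic ingredient is that $\tau$ acts as $-\mathrm{id}$ on $\Lambda^2\mathcal{G}$: for a bivector $\eta=e_i\cdot e_j$ with $i\neq j$ one has $\tau(\eta)=e_j\cdot e_i=-e_i\cdot e_j=-\eta$. Hence $\tau(\omega(X))=-\omega(X)$.

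I would then simply compute both sides in the frame $\tilde s$. Using the definition $\langle\langle\varphi,\varphi'\rangle\rangle=\tau[\varphi']\,[\varphi]$, the derivation $\partial_X$ gives
\[
\partial_X\langle\langle\varphi,\varphi'\rangle\rangle = (\partial_X\tau[\varphi'])\,[\varphi] + \tau[\varphi']\,(\partial_X[\varphi]).
\]
On the other hand,
\[
\langle\langle\nabla_X\varphi,\varphi'\rangle\rangle + \langle\langle\varphi,\nabla_X\varphi'\rangle\rangle
= \tau[\varphi']\bigl(\partial_X[\varphi]+\omega(X)[\varphi]\bigr)
+ \tau\bigl(\partial_X[\varphi']+\omega(X)[\varphi']\bigr)[\varphi].
\]
Since $\tau$ is an anti-automorphism, $\tau(\omega(X)[\varphi'])=\tau[\varphi']\,\tau(\omega(X))=-\tau[\varphi']\,\omega(X)$, so the two $\omega(X)$-terms cancel, and what remains is exactly $\partial_X\langle\langle\varphi,\varphi'\rangle\rangle$.

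The only real point to check is frame independence, which follows because both $\nabla$ and $\langle\langle\cdot,\cdot\rangle\rangle$ are defined by constructions that are $\mathrm{Spin}(p)\times\mathrm{Spin}(q)$-equivariant (the latter by the lemma just proved). I expect no genuine obstacle: the whole proof rests on the single observation $\tau|_{\Lambda^2\mathcal{G}}=-\mathrm{id}$, combined with the anti-automorphism property of $\tau$.
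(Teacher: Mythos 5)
Your proof is correct and coincides with the paper's own argument: both work in a local spinorial frame where $\nabla_X$ becomes $\partial_X$ plus left Clifford multiplication by a bivector-valued connection term, and both rely on the single fact that $\tau$ equals $-\mathrm{id}$ on $\Lambda^2\mathcal{G}$ together with the anti-automorphism property to cancel the connection contributions. The remark on frame independence is a harmless addition that the paper leaves implicit.
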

\begin{proof}
If $\varphi=[\tilde{s},[\varphi]]$ is a section of $\Sigma=\tilde{Q}\times_\rho Cl(\mathcal{G}),$ we have
\begin{equation}\label{nabla phi rho}
\nabla_X\varphi=\left[\tilde{s},\partial_X[\varphi]+\rho_*(\tilde{s}^*\alpha(X))([\varphi])\right],\hspace{1cm}\forall X\in\ TM,
\end{equation}
where $\rho$ is the representation (\ref{rep spin p spin q}) and $\alpha$ is the connection form on $\tilde{Q};$ the term $\rho_*(\tilde{s}^*\alpha(X))$ is an endomorphism of $Cl(\mathcal{G})$ given by the multiplication on the left by an element belonging to $\Lambda^2\mathcal{G}\subset Cl(\mathcal{G}),$ still denoted by  $\rho_*(\tilde{s}^*\alpha(X)).$ Such an element satisfies
$$\tau\left( \rho_*(\tilde{s}^*\alpha(X))\right)=-\rho_*(\tilde{s}^*\alpha(X)),$$
and we have
\begin{eqnarray*}
\langle\langle\nabla_X\varphi,\varphi'\rangle\rangle+\langle\langle\varphi,\nabla_X\varphi'\rangle\rangle&=&\tau\{[\varphi']\}\left(\partial_X[\varphi]+\rho_*(\tilde{s}^*\alpha(X))[\varphi]\right)\\
&&+\tau\left\{\partial_X[\varphi']+\rho_*(\tilde{s}^*\alpha(X))[\varphi']\right\}[\varphi]\\
&=&\tau\{[\varphi']\}\partial_X[\varphi]+\tau\left\{\partial_X[\varphi']\right\}[\varphi]\\
&=&\partial_X\langle\langle\varphi,\varphi'\rangle\rangle.
\end{eqnarray*}
\end{proof}
We finally note that there is a natural action of $Spin(\mathcal{G})$ on $U\Sigma,$ by right multiplication: for $\varphi=[\tilde s,[\varphi]]\in U\Sigma=\tilde{Q}\times_{\rho}Spin(\mathcal{G})$ and $a\in Spin(\mathcal{G})$ we set
\begin{equation}\label{def right action}
\varphi\cdot a:=[\tilde{s},[\varphi]\cdot a]\ \in U\Sigma.
\end{equation}

\subsection{The spin geometry of a submanifold of $G$} 
We keep the notation of the previous section, assuming moreover here that $M$ is a submanifold of a Lie group $G$ and that $E\rightarrow M$ is its normal bundle. If we consider spin structures on $TM$ and on $E$ whose sum is the trivial spin structure of $TM\oplus E$ \cite{Mi2}, we have
$$\Sigma=\tilde{Q}\times_{\rho} Cl(\mathcal{G})\simeq M\times Cl(\mathcal{G}),$$
where the last bundle is the spinor bundle of $G$ restricted to $M.$ Two connections are thus defined on $\Sigma,$ the connection $\nabla$ and the connection $\nabla^G;$ they satisfy the following Gauss formula:
\begin{equation}\label{gauss formula}
\nabla^G_X\varphi=\nabla_X\varphi+\frac{1}{2}\sum_{j=1}^p e_j\cdot B(X,e_j)\cdot\varphi
\end{equation} 
for all $\varphi\in\Gamma(\Sigma)$ and all $X\in \Gamma(TM),$ where $B:TM\times TM\rightarrow E$ is the second fundamental form of $M$ into $G$ and $e_1,\ldots,e_p$ is an orthonormal basis of $TM.$ We refer to \cite{Ba} for the proof (in a slightly different context). Since the covariant derivative of a left invariant spinor field is given by (\ref{nabla spinor invariant}), the restriction to $M$ of such a spinor field satisfies
\begin{equation}\label{gauss formula Gamma}
\nabla_X\varphi=-\frac{1}{2}\sum_{j=1}^p e_j\cdot B(X,e_j)\cdot\varphi+\frac{1}{2}\Gamma(X)\cdot\varphi
\end{equation} 
for all $X\in TM.$
\section{Main result}\label{section main result}
We consider a $p$-dimensional Riemannian manifold $M$ and a bundle $E\rightarrow M$ of rank $q,$ with a fibre metric and a compatible connection. We assume that $E$ and $TM$ are oriented and spin, with given spin structures, and consider the spinor bundles $\Sigma$ and $U\Sigma$ introduced in the previous section. We suppose that a bilinear and symmetric map $B:TM\times TM\rightarrow E$ is given, and we moreover do the following two assumptions:
\\
\begin{enumerate}
\item There exists a bundle isomorphism
\begin{equation}\label{def bundle iso f}
f:\ TM\oplus E\rightarrow M\times\mathcal{G}
\end{equation}
which preserves the metrics; this mapping permits to define a bundle map
\begin{equation}
\Gamma:\ TM\oplus E\rightarrow\Lambda^2(TM\oplus E)
\end{equation}
such that, for all $X,Y\in \Gamma(TM\oplus E),$
\begin{equation}\label{def Gamma TM+E}
f(\Gamma(X)(Y))=\Gamma(f(X))(f(Y))
\end{equation}
where on the right-hand side $\Gamma$ is the map defined on $\mathcal{G}$ by (\ref{def Gamma}), together with the following notion: a section $Z\in \Gamma(TM\oplus E)$ will be said to be left invariant if $f(Z):M\rightarrow\mathcal{G}$ is a constant map.
\item The covariant derivative of a left invariant section $Z\in \Gamma(TM\oplus E)$ is given by
\begin{equation}\label{nabla field invariant}
\nabla_XZ=\Gamma(X)(Z)-B(X,Z^T)+B^*(X,Z^N)
\end{equation}
for all $X\in TM,$ where  $Z=Z^T+Z^N$ in $TM\oplus E$ and $B^*:TM\times E\rightarrow TM$ is the bilinear map such that for all $X,Y\in\Gamma(TM)$ and $N\in\Gamma(E)$
$$\langle B(X,Y),N\rangle =\langle Y,B^*(X,N)\rangle.$$
\end{enumerate}
\begin{rem}
These two assumptions are equivalent to the assumptions made in \cite{Lod,PT}: they are necessary to write down the equations of Gauss, Codazzi and Ricci in a general metric Lie group, and to obtain a Fundamental Theorem for immersions in that context; see Section \ref{section fundamental theorem}.
\end{rem}
\begin{rem}\label{rmk cond frame}
Sometimes it is convenient to write these assumptions in some local frames. For sake of simplicity, we assume that $E$ is a trivial line bundle, oriented by a unit section $\nu.$ Let $(e_1^o,e_2^o,\ldots,e_n^o)$ be an orthonormal basis of $\mathcal{G}$ and $\Gamma_{ij}^k\in\R,$ $1\leq i,j,k\leq n,$ be such that
$$\Gamma(e_i^o)(e_j^o)=\sum_{k=1}^n\Gamma_{ij}^k\ e_k^o.$$
We set, for $i=1,\ldots,n,$ $\underline{e}_i\in\Gamma(TM\oplus E)$ such that $f(\underline{e}_i)=e_i^o,$ and $f_i\in C^{\infty}(M),$ $T_i\in\Gamma(TM)$ such that $\underline{e}_i=T_i+f_i\nu.$ Since $f$ preserves the metrics, the vectors $\underline{e}_1,\underline{e}_2,\ldots,\underline{e}_n$ are orthonormal, and we have
\begin{equation}\label{cond e_i orth}
\langle T_i,T_j\rangle+f_if_j=\delta_{ij}
\end{equation}
for all $i,j=1,\ldots,n.$ The assumption (\ref{nabla field invariant}) then reads as follows: for all $X\in TM$ and $j=1,\ldots,n,$
\begin{equation}\label{eqn 1 trad}
\nabla_XT_j=\sum_{i,k}\Gamma_{ij}^k\langle X,T_i\rangle T_k+f_jS(X),
\end{equation}
\begin{equation}\label{eqn 2 trad}
df_j(X)=\sum_{i,k}\Gamma_{ij}^kf_k\langle X,T_i\rangle-h(X,T_j)
\end{equation}
where $S(X)=B^*(X,\nu)$ and $h(X,Y)=\langle B(X,Y),\nu\rangle.$ Conversely, if vector fields $T_i\in\Gamma(TM)$ and functions $f_i\in C^{\infty}(M),$ $1\leq i\leq n,$ are given such that (\ref{cond e_i orth}), (\ref{eqn 1 trad}) and (\ref{eqn 2 trad}) hold, we may define a bundle isomorphism $f:TM\oplus E\rightarrow M\times\mathcal{G}$ preserving the metrics and such that (\ref{nabla field invariant}) holds: setting $\underline{e}_i=T_i+f_i\nu,$ we define $f$ such that $f(\underline{e}_i)=e_i^o,$ $i=1,\ldots,n.$
\end{rem}
We state the main result of the paper:
\begin{thm}\label{thm main result}
We moreover assume that $M$ is simply connected. The following statements are equivalent:
\begin{enumerate}
\item There exists a section $\varphi\in\Gamma(U\Sigma)$ such that
\begin{equation}\label{killing equation}
\nabla_X\varphi=-\frac{1}{2}\sum_{j=1}^pe_j\cdot B(X,e_j)\cdot\varphi+\frac{1}{2}\Gamma(X)\cdot\varphi
\end{equation}
for all $X\in TM.$
\item There exists an isometric immersion $F:\ M\rightarrow G$ with normal bundle $E$ and second fundamental form $B.$
\end{enumerate}
More precisely, if $\varphi$ is a solution of (\ref{killing equation}), replacing $\varphi$ by $\varphi\cdot a$ for some $a\in Spin(\mathcal{G})$ if necessary, and considering the $\mathcal{G}-$valued 1-form $\xi$ defined by
\begin{equation}\label{def xi}
\xi(X):=\langle\langle X\cdot\varphi,\varphi\rangle\rangle
\end{equation}
for all $X\in TM,$ the formula $F=\int\xi$  defines an isometric immersion in $G$ with normal bundle $E$ and second fundamental form $B.$ Here $\int$ stands for the Darboux integral, i.e. $F=\int\xi:M\rightarrow G$ is such that $F^*\omega_G=\xi,$ where $\omega_G\in\Omega^1(G,\mathcal{G})$ is the Maurer-Cartan form of $G$ defined in (\ref{def maurer-cartan}). Reciprocally, an isometric immersion $M\rightarrow G$ with normal bundle $E$ and second fundamental form $B$ may be written in that form.
\end{thm}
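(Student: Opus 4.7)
The plan is to prove (1)$\Rightarrow$(2) by constructing the immersion $F$ as a Darboux primitive of the $\mathcal{G}$-valued form $\xi$, and to obtain (2)$\Rightarrow$(1) as essentially a reformulation of the preliminaries. In the direction (2)$\Rightarrow$(1), given $F$, the Maurer--Cartan form applied to $dF$ and to the ambient orthogonal complement of $dF(TM)$ furnishes the bundle isomorphism $f:TM\oplus E\to M\times\mathcal{G}$ of assumption (\ref{def bundle iso f}); condition (\ref{nabla field invariant}) then reads as the combined Gauss--Weingarten formulas for $F$. Any unit left-invariant spinor field on $G$ restricted to $M$ solves (\ref{killing equation}) by (\ref{gauss formula Gamma}), and for such a choice the formula $F=\int\xi$ is tautological.

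For the substantive direction (1)$\Rightarrow$(2), I would proceed in four steps. First, show that $\xi$ is genuinely $\mathcal{G}$-valued: in any spinorial frame $[\varphi]\in\mathrm{Spin}(\mathcal{G})$ (since $\varphi\in U\Sigma$), one has
\begin{equation*}
\xi(X)=\tau([\varphi])\cdot[X]\cdot[\varphi]=\mathrm{Ad}_{[\varphi]^{-1}}([X]),
\end{equation*}
which lies in $\mathcal{G}$ with $|\xi(X)|=|X|$. Second, establish the Maurer--Cartan structure equation $d\xi+\tfrac{1}{2}[\xi\wedge\xi]=0$; this is the technical heart of the proof. Third, apply Darboux's theorem, valid because $M$ is simply connected, to produce $F:M\to G$ with $F^*\omega_G=\xi$; the map $F$ is an isometric immersion by the isometry property established in step one together with the metric property of $\omega_G$ in (\ref{TG trivial}). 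Fourth, extend (\ref{def xi}) to $\tilde\xi:TM\oplus E\to M\times\mathcal{G}$ by $\tilde\xi(Z)=\langle\langle Z\cdot\varphi,\varphi\rangle\rangle$; the same Adjoint argument shows $\tilde\xi$ is a global metric isomorphism, and since it restricts to $\xi$ on $TM$, the image $\tilde\xi(E)$ is the orthogonal complement of $dF(TM)$ in $\mathcal{G}$, i.e.\ the normal bundle of $F$ transported via $\omega_G$. Comparing (\ref{killing equation}) with the Gauss formula (\ref{gauss formula}) applied to $F$ then forces the second fundamental form of $F$ to coincide with $B$.

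The main obstacle is the Maurer--Cartan equation in step two. I would compute $d\xi(X,Y)$ by differentiating $\xi(Y)$ along $X$, using the compatibility of $\nabla$ with $\langle\langle\cdot,\cdot\rangle\rangle$ together with the Leibniz rule for the Clifford product, and substituting the Killing equation (\ref{killing equation}) for $\nabla_X\varphi$ and $\nabla_Y\varphi$. The terms involving the second fundamental form $B$ should cancel by the symmetry of $B$ combined with the anticommutation of orthogonal vectors in $Cl(TM\oplus E)$. The surviving terms involving $\Gamma$ should collapse into the required bracket $\tfrac{1}{2}[\xi(X),\xi(Y)]$ via the Clifford identity $\omega\cdot X-X\cdot\omega=2\omega(X)$ for $\omega\in\Lambda^2\mathcal{G}$ and $X\in\mathcal{G}$ (the kind of lemma flagged for the paper's appendix), together with the torsion-free identity (\ref{nablaG without torsion}). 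The freedom to replace $\varphi$ by $\varphi\cdot a$ in the statement corresponds to the initial choice of orthonormal frame at the base point, and is precisely what aligns the ambient spin structure on the normal bundle of $F$ with that prescribed on~$E$.
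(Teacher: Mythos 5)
Your overall architecture (show $\xi$ is $\mathcal{G}$-valued and isometric, prove the structure equation, integrate by Darboux, identify normal bundle and second fundamental form; converse via a restricted left-invariant spinor) matches the paper's. But there is a genuine gap at what you correctly identify as the technical heart, step two. Carrying out your own computation of $d\xi(X,Y)$ with the Killing equation, the $B$-terms do cancel, but the surviving $\Gamma$-terms give
\begin{equation*}
d\xi(X,Y)=-\langle\langle\{\Gamma(X)(Y)-\Gamma(Y)(X)\}\cdot\varphi,\varphi\rangle\rangle=-\xi\bigl(\Gamma(X)(Y)-\Gamma(Y)(X)\bigr),
\end{equation*}
where $\Gamma$ on $TM\oplus E$ is the pullback of the ambient $\Gamma$ by the isomorphism $f$ of hypothesis (\ref{def bundle iso f}). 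By the torsion-free identity this equals $-\xi\bigl(f^{-1}([f(X),f(Y)])\bigr)$, and this is $-[\xi(X),\xi(Y)]$ only if $\xi\circ f^{-1}$ is an automorphism of the Lie algebra $\mathcal{G}$. For an arbitrary solution $\varphi$ of (\ref{killing equation}) this fails: one only knows a priori that $\xi=T\circ f$ for some $T\in SO(\mathcal{G})$, and a general $T$ does not preserve the bracket. This is precisely why the theorem says ``replacing $\varphi$ by $\varphi\cdot a$ if necessary.'' The missing step in your outline is the intermediate lemma that $\xi(Z)$ is \emph{constant} for every left-invariant section $Z$ of $TM\oplus E$ — this is where hypothesis (\ref{nabla field invariant}) and the commutator identities of the appendix are used, and it is the only place in the direction $(1)\Rightarrow(2)$ where (\ref{nabla field invariant}) enters; it yields $\xi=T\circ f$ with $T$ constant, after which one normalizes $T=\mathrm{Id}$ by the right action of a suitable $a\in Spin(\mathcal{G})$, so that $\xi=f$ and the bracket identity, hence the structure equation, holds. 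Your closing remark that the replacement $\varphi\mapsto\varphi\cdot a$ merely ``aligns the ambient spin structure on the normal bundle'' misattributes its role: it is needed to make the structure equation true at all.

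The same normalization is also needed in your step four: to show $B_F(\xi(X),\xi(Y))=\xi(B(X,Y))$ one must identify $\langle\langle\Gamma(X)(Y)\cdot\varphi,\varphi\rangle\rangle=\xi(\Gamma(X)(Y))$ with $\Gamma(\xi(X))(\xi(Y))$, which again uses $\xi=f$ together with (\ref{def Gamma TM+E}). Your alternative of ``comparing (\ref{killing equation}) with the Gauss formula (\ref{gauss formula}) applied to $F$'' presupposes an identification of the abstract bundle $\Sigma$ with the restricted spinor bundle of $G$ along $F$ carrying $\varphi$ to a restricted left-invariant spinor; that identification is essentially what is being proved, so the paper instead verifies the second fundamental form and normal connection directly from the derivative of $\xi$.
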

The formula $F=\int \xi$ where $\xi$ is defined by (\ref{def xi}) may be regarded as a generalized Weierstrass representation formula.
\\

This theorem generalizes the main result of \cite{BLR2} to a Lie group equipped with a left invariant metric (see Section \ref{section special cases}). 
\begin{rem}
If $\varphi$ is a solution of (\ref{killing equation}) and $a$ belongs to $Spin(\mathcal{G}),$ $\varphi':=\varphi\cdot a$ is also a solution of (\ref{killing equation}) (see (\ref{def right action}) for the definition of $\varphi\cdot a$). Moreover the associated 1-forms $\xi_{\varphi}$ and $\xi_{\varphi'}$ are linked by 
\begin{equation}\label{xi phi g}
\xi_{\varphi'}\ =\ \tau(a)\ \xi_{\varphi}\ a=Ad(a^{-1})\circ\xi_{\varphi}.
\end{equation}
Let us recall that a 1-form $\xi\in\Omega^1(M,\mathcal{G})$ is Darboux integrable if and only if it satisfies the structure equation $d\xi+[\xi,\xi]=0$ ($M$ is simply connected). The theorem thus says that if $\varphi$ is a solution of (\ref{killing equation}), it is possible to find an other solution $\varphi'$ of this equation such that $\xi_{\varphi'}$ is Darboux integrable and $F=\int\xi_{\varphi'}$ is an immersion with normal bundle $E$ and second fundamental form $B$. The proof of $(1)\Rightarrow (2)$ in the theorem will in fact follow these lines. See also Remark \ref{rmk congruence} below. 
\end{rem}
\begin{rem}
Setting
$$\vec{H}=\frac{1}{2}\sum_{j=1}^pB(e_j,e_j)\ \in E\hspace{.5cm}\mbox{and}\hspace{.5cm}\gamma=\frac{1}{2}\sum_{j=1}^pe_j\cdot\Gamma(e_j)\ \in Cl(TM\oplus E)$$
where $e_1,\ldots,e_p$ is an orthonormal basis of $TM,$ a solution $\varphi$ of (\ref{killing equation}) is a solution of the Dirac equation
\begin{equation}\label{dirac equation}
D\varphi:=\sum_{j=1}^pe_j\cdot\nabla_{e_j}\varphi=\left(\vec{H}+\gamma\right)\cdot\varphi.
\end{equation}
This equation will be especially interesting for the representation of a surface in a 3-dimensional Lie group (see Section \ref{section special cases}). 
\end{rem}
We now prove the theorem: $(1)\Rightarrow (2)$ will be a consequence of Propositions \ref{lem xi closed} and \ref{lem F isometry} below, and $(2)\Rightarrow (1)$ will be proved at the end of the section.
\begin{prop} \label{lem xi closed}
Assume that $\varphi\in\Gamma(U\Sigma)$ is a solution of (\ref{killing equation}) and define $\xi$ by (\ref{def xi}). Then
\begin{enumerate}
\item \label{lem xi closed 1} $\xi$ takes its values in $\mathcal{G}\subset Cl(\mathcal{G});$
\item \label{lem xi closed 2} there exists $T\in SO(\mathcal{G})$ such that $\xi=T\circ f;$ 
\item \label{lem xi closed 3} replacing $\varphi$ by $\varphi\cdot a$ where $a\in Spin(\mathcal{G})$ is such that $Ad(a)=T,$ we have $\xi=f,$ and $\xi$ satisfies the structure equation 
\begin{equation}\label{xi structure equation}
d\xi+[\xi,\xi]=0.
\end{equation}
\end{enumerate} 
\end{prop}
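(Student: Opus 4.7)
The plan for (1) is immediate: fixing a local spinorial frame $\tilde s \in \tilde Q$ and writing $[\varphi] = \Phi$, we have $\Phi \in Spin(\mathcal{G})$ since $\varphi \in U\Sigma$, hence $\tau(\Phi) = \Phi^{-1}$. For any $X \in (TM \oplus E)_p$ with representative $[X] \in \mathcal{G}$ in the associated orthonormal frame,
\[
\xi(X) = \tau(\Phi)\cdot [X] \cdot \Phi = \Phi^{-1}\cdot [X] \cdot \Phi
\]
lies in $\mathcal{G}$ since the twisted adjoint action of $Spin(\mathcal{G})$ preserves $\mathcal{G} \subset Cl(\mathcal{G})$.

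For (2), at each $p$ both $\xi_p$ and $f_p$ are fibrewise isometries onto $\mathcal{G}$, so $T_p := \xi_p \circ f_p^{-1} \in SO(\mathcal{G})$ is well defined pointwise. My plan is to show $T$ is constant by verifying that $\xi(\underline{X}):M\to\mathcal{G}$ is constant for every left invariant section $\underline{X} := f^{-1}(X_0)$, $X_0 \in \mathcal{G}$. Using compatibility of $\nabla$ with $\langle\langle.,.\rangle\rangle$,
\[
\partial_Y\,\xi(\underline{X}) = \langle\langle (\nabla_Y \underline{X})\cdot\varphi, \varphi\rangle\rangle + \langle\langle \underline{X}\cdot \nabla_Y\varphi, \varphi\rangle\rangle + \langle\langle \underline{X}\cdot\varphi, \nabla_Y\varphi\rangle\rangle.
\]
After substituting the Killing equation $\nabla_Y\varphi = A(Y)\cdot\varphi$ with $A(Y) := \frac{1}{2}\Gamma(Y) - \frac{1}{2}\sum_j e_j\cdot B(Y,e_j) \in \Lambda^2(TM\oplus E)$ (so $\tau(A(Y)) = -A(Y)$), the last two terms combine, via (\ref{scalar product property1})--(\ref{scalar product property2}), into $\langle\langle [\underline{X},A(Y)]_{Cl}\cdot\varphi,\varphi\rangle\rangle$ with $[\cdot,\cdot]_{Cl}$ the Clifford commutator. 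The appendix's identification $[V,\omega]_{Cl} = -2\widehat{\omega}(V)$ between $\omega \in \Lambda^2$ and its associated skew endomorphism, together with the explicit form of $A(Y)$, then yields $[\underline{X},A(Y)]_{Cl} = -\bigl(\Gamma(Y)(\underline{X}) - B(Y,\underline{X}^T) + B^*(Y,\underline{X}^N)\bigr)$, which by assumption (\ref{nabla field invariant}) equals $-\nabla_Y\underline{X}$. The derivative therefore vanishes.

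For (3), surjectivity of $Ad:Spin(\mathcal{G})\to SO(\mathcal{G})$ lets me pick $a \in Spin(\mathcal{G})$ with $Ad(a) = T$; by (\ref{xi phi g}) the substitution $\varphi \mapsto \varphi\cdot a$ turns $\xi$ into $Ad(a^{-1})\circ T\circ f = f$. For the structure equation, running the same expansion with both arguments in $TM$ and using the torsion-free identity $\nabla_X Y - \nabla_Y X = [X,Y]$ to kill the $\nabla$-contributions leaves $d\xi(X,Y) = \xi([Y,A(X)]_{Cl}) - \xi([X,A(Y)]_{Cl})$. Specializing the above bracket computation to $Y \in TM$ (so $Y^N = 0$) gives $[Y,A(X)]_{Cl} = B(X,Y) - \Gamma(X)(Y)$ and analogously for the swapped term; the $B$-parts cancel by symmetry of $B$, while the remaining $-\xi\bigl(\Gamma(X)(Y)-\Gamma(Y)(X)\bigr)$ equals $-\xi\bigl(f^{-1}[f(X),f(Y)]_{\mathcal{G}}\bigr)$ by (\ref{def Gamma TM+E}) and (\ref{nablaG without torsion}), and this is $-[\xi(X),\xi(Y)]$ since $\xi = f$. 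Hence $d\xi + [\xi,\xi] = 0$.

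The main difficulty is bookkeeping rather than conceptual: tracking sign conventions in the Clifford/$\Lambda^2$ correspondence, correctly separating tangential and normal components when applying (\ref{nabla field invariant}), and keeping the two roles of $\Gamma$ straight---as a map on $\mathcal{G}$ versus its transport to $TM\oplus E$ via $f$---so that the torsion-free identity (\ref{nablaG without torsion}) produces the final cancellation.
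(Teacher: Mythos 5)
Your proposal is correct and follows essentially the same route as the paper: part (1) via the adjoint action of $Spin(\mathcal{G})$ on $\mathcal{G}$, part (2) by showing $\xi$ is constant on left invariant sections using the Killing equation, the appendix lemmas and assumption (\ref{nabla field invariant}), and part (3) by surjectivity of $Ad$ followed by the $d\xi$ computation reduced to (\ref{def Gamma TM+E}) and (\ref{nablaG without torsion}). The only cosmetic difference is that you keep the torsion terms explicit rather than evaluating at a point where $\nabla X=\nabla Y=0$.
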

\begin{proof} 
(\ref{lem xi closed 1}). By the very definition of $\xi,$ we have
$$\xi(X)=\tau[\varphi][X][\varphi]$$  
for all $X\in TM,$ where $[X]$ and $[\varphi]$ represent $X$ and $\varphi$ in a given frame $\tilde{s}$ of $\tilde{Q}.$ Since $[X]$ belongs to $\mathcal{G}\subset Cl(\mathcal{G})$ and $[\varphi]$ is an element of $Spin(\mathcal{G}),$ $\xi(X)$ belongs to $\mathcal{G}.$
\\(\ref{lem xi closed 2}). Let us first show that for every left invariant section $Z\in\Gamma(TM\oplus E),$ the map $\xi(Z):M\rightarrow \mathcal{G}$ is constant: if $Z\in\Gamma(TM\oplus E)$ is left invariant, we compute, for $X\in TM,$
$$\partial_X\ \xi(Z)=\langle\langle\nabla_XZ\cdot\varphi,\varphi\rangle\rangle+\langle\langle Z\cdot\nabla_X\varphi,\varphi\rangle\rangle+\langle\langle Z\cdot\varphi,\nabla_X\varphi\rangle\rangle.$$
But, by (\ref{killing equation}), 
\begin{eqnarray}
\langle\langle Z\cdot\nabla_X\varphi,\varphi\rangle\rangle+\langle\langle Z\cdot\varphi,\nabla_X\varphi\rangle\rangle&=&\langle\langle[-\Gamma(X)+\sum_{j=1}^p e_j\cdot B(X,e_j),Z]\cdot\varphi,\varphi\rangle\rangle\nonumber\\
&=&\langle\langle \left\{-\Gamma(X)(Z)+B(X,Z^T)-B^*(X,Z^N)\right\}\cdot\varphi,\varphi\rangle\rangle\label{d xi computation}
\end{eqnarray}
where the brackets $[.,.]$ stand here for the commutator in $Cl(TM\oplus E)$ and where we use Lemmas \ref{lem1 ap1} and \ref{lem3 ap1} in the last step. Thus $\partial_X\ \xi(Z)=0$ by (\ref{nabla field invariant}), and $\xi(Z):M\rightarrow \mathcal{G}$ is constant. Now, if $(e_1^o,\ldots,e_n^o)$ is a fixed orthonormal basis of $\mathcal{G}$ and denoting by $\underline{e}_1,\ldots,\underline{e}_n$ the left invariant sections of $TM\oplus E$ such that $f(\underline{e}_i)=e_i^o,$ $i=1,\ldots,n,$ we have, for all section $Z=\sum_i Z_i\underline{e}_i\in\Gamma(TM\oplus E),$ 
$$\xi(Z)=\sum_{i=1}^n Z_i\ \xi(\underline{e}_i)$$ 
where $(\xi(\underline{e}_1),\ldots,\xi(\underline{e}_n))$ is a constant orthonormal basis of $\mathcal{G}.$ Considering the orthogonal transformation $T:\mathcal{G}\rightarrow \mathcal{G}$ such that $T(e_i^o)=\xi(\underline{e}_i),$ $i=1,\ldots,n,$ we get 
$$\xi(Z)=\sum_{i=1}^nZ_i\ T(e^o_i)=T\left(\sum_{i=1}^nZ_ie^o_i\right)=T(f(Z)),$$ 
i.e. $\xi=T\circ f.$ 
\\(\ref{lem xi closed 3}). For all $a\in Spin(\mathcal{G})$ and $X\in TM,$ we have
\begin{eqnarray*}
\langle\langle X\cdot(\varphi\cdot a),\varphi\cdot a\rangle\rangle&=&\tau([\varphi]a)[X][\varphi]a\\
&=&\tau(a)\ \langle\langle X\cdot\varphi,\varphi\rangle\rangle\ a\\
&=&Ad(a^{-1})(\xi(X))\\
&=&Ad(a^{-1})(T\circ f(X));
\end{eqnarray*}
thus, replacing $\varphi$ by $\varphi\cdot a$ where  $a\in Spin(\mathcal{G})$ is such that $Ad(a)=T$ we get $\xi=f.$ By the computation in (\ref{d xi computation}), we have, for $X,Y\in \Gamma(TM)$ such that $\nabla X=\nabla Y=0$ at $x_0,$
\begin{eqnarray*}
\partial_X\ \xi(Y)&=&\langle\langle Y\cdot\nabla_X\varphi,\varphi\rangle\rangle+\langle\langle Y\cdot\varphi,\nabla_X\varphi\rangle\rangle\\
&=&\langle\langle \left\{-\Gamma(X)(Y)+B(X,Y)\right\}\cdot\varphi,\varphi\rangle\rangle
\end{eqnarray*}
and thus
\begin{eqnarray*}
d\xi(X,Y)&=&\partial_X\ \xi(Y)-\partial_Y\ \xi(X)\\
&=&-\langle\langle \left\{\Gamma(X)(Y)-\Gamma(Y)(X)\right\}\cdot\varphi,\varphi\rangle\rangle\\
&=&-\xi(\Gamma(X)(Y)-\Gamma(Y)(X))\\
&=&-[\xi(X),\xi(Y)],
\end{eqnarray*}
since $\xi=f,$ $\Gamma$ satisfies (\ref{def Gamma TM+E}), and by (\ref{nablaG without torsion}).
\end{proof}
We keep the assumption and notation of Proposition \ref{lem xi closed}, and moreover assume that $M$ is simply connected; we consider 
$$F:M\rightarrow G$$ 
such that $F^*\omega_G=\xi$ (assuming that $\varphi$ is chosen in such a way that $\xi$ satisfies the structure equation (\ref{xi structure equation})). The next proposition follows from the properties of the Clifford product:
\begin{prop}\label{lem F isometry}
1. The map $F:M\rightarrow G$ is an isometry.
\\2. The map
\begin{eqnarray*}
\Phi_E:\hspace{1cm} E&\rightarrow& F(M)\times\mathcal{G}\\
X\in E_m&\mapsto& (F(m),\xi(X)) 
\end{eqnarray*}
is an isometry between $E$ and the normal  bundle of $F(M)$ into $G,$ preserving connections and second fundamental forms. Here, for $X\in E,$ $\xi(X)$ still stands for the quantity $\langle\langle X\cdot\varphi,\varphi\rangle\rangle.$
\end{prop}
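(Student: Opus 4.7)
The plan is to split the statement into two parts: (1) $F:M\to G$ is an isometric immersion, and (2) $\Phi_E$ isometrically identifies $E$ with the normal bundle of $F(M)$ in $G$, preserving both the normal connection and the second fundamental form. The whole argument builds on the fact, already secured in Proposition \ref{lem xi closed}, that $\xi=f$ on $TM\oplus E$ and $F^*\omega_G=\xi$ on $TM$.

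For (1), the differential of $F$ satisfies $dF(X)={L_{F(m)}}_*(\xi(X))={L_{F(m)}}_*(f(X))$ for every $X\in T_mM$. Since ${L_{F(m)}}_*$ is a fibre isometry by left-invariance of the metric and $f|_{TM}$ preserves the metric by assumption, $F$ is an isometric immersion. The analogous formula applied fibrewise to $E$ shows that $\Phi_E$ is a fibre isometry. Moreover, because $f:TM\oplus E\to M\times\mathcal{G}$ is a metric-preserving isomorphism, $f(E_m)$ is the orthogonal complement of $f(T_mM)$ in $\mathcal{G}$; translating by ${L_{F(m)}}_*$ identifies $\Phi_E(E_m)$ with the orthogonal complement of $dF(T_mM)$ in $T_{F(m)}G$, so $\Phi_E$ realizes $E$ as the normal bundle of $F(M)$.

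For the preservation of connections and of the second fundamental form, I would fix an orthonormal basis $(e_1^o,\ldots,e_n^o)$ of $\mathcal{G}$ and work with the associated local sections $\underline{e}_1,\ldots,\underline{e}_n$ of $TM\oplus E$ from Remark \ref{rmk cond frame}, setting $\Phi:=dF\oplus\Phi_E:TM\oplus E\to T_{F(M)}G$. Since $\xi(\underline{e}_i)=f(\underline{e}_i)=e_i^o$ is constant, $\Phi(\underline{e}_i)$ coincides along $F(M)$ with the restriction of the left-invariant vector field $e_i^o$ of $G$; using (\ref{def Gamma}) for left-invariant fields together with the compatibility (\ref{def Gamma TM+E}) of $\Gamma$ with $f$, one obtains $\nabla^G_{dF(X)}\Phi(\underline{e}_j)=\Phi(\Gamma(X)(\underline{e}_j))$ for every $X\in TM$. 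Hypothesis (\ref{nabla field invariant}) then rewrites $\Gamma(X)(\underline{e}_j)=\nabla_X\underline{e}_j+B(X,\underline{e}_j^T)-B^*(X,\underline{e}_j^N)$. Splitting this identity into its $TM$ and $E$ components and comparing with the Gauss--Weingarten formulas for the immersion $F(M)\subset G$ yields simultaneously that $dF$ intertwines the Levi-Civita connection of $M$ with the induced connection on $F(M)$, that $\Phi_E$ intertwines the connection on $E$ with the normal connection of $F(M)$, and that the second fundamental form of $F(M)$ at $(dF(X),dF(Y))$ equals $\Phi_E(B(X,Y))$. A standard extension by $\mathcal{C}^\infty(M)$-linearity to arbitrary sections finishes the proof. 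The main obstacle is essentially bookkeeping --- carefully decomposing (\ref{nabla field invariant}) with the correct sign conventions to match the Weingarten formula $\nabla^G_Y\tilde N=-A_{\tilde N}(Y)+\nabla^\perp_Y\tilde N$ --- but conceptually everything is forced by the identification $\xi=f$ and by the fact that each $\Phi(\underline{e}_i)$ is the restriction to $F(M)$ of a left-invariant vector field of $G$.
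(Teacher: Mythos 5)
Your proof is correct, but it takes a genuinely different route from the paper's for the main part of the statement. The paper proves that $\xi$ preserves fibre metrics by a direct Clifford computation ($\xi(X)\xi(Y)+\xi(Y)\xi(X)=\tau[\varphi]([X][Y]+[Y][X])[\varphi]=-2\langle X,Y\rangle$, valid for any $\varphi\in\Gamma(U\Sigma)$), and then establishes $\xi(B(X,Y))=B_F(\xi(X),\xi(Y))$ and the preservation of the normal connection by differentiating $\xi(Y)=\langle\langle Y\cdot\varphi,\varphi\rangle\rangle$ with the Killing equation (\ref{killing equation}) via the computation (\ref{d xi computation}), so the spinor field stays at the centre of the argument throughout. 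You instead dispense with $\varphi$ entirely after Proposition \ref{lem xi closed}: from $\xi=f$ and $F^*\omega_G=\xi$ you observe that $\Phi(\underline{e}_i)$ is the restriction along $F$ of a left invariant vector field of $G$, differentiate it with (\ref{def Gamma}) and (\ref{def Gamma TM+E}), and substitute the structural hypothesis (\ref{nabla field invariant}); this is essentially the classical moving-frame derivation and makes transparent that, once the Darboux primitive of $f$ exists, the extrinsic geometry of $F$ is forced by hypotheses (1)--(2) of Section \ref{section main result} alone. The paper's route buys economy (it reuses (\ref{d xi computation}) verbatim and keeps everything inside the spinorial formalism); yours buys geometric clarity and separates cleanly what depends on the spinor from what depends on the frame hypotheses. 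One point to tighten: for a fixed $j$ the section $\underline{e}_j$ mixes tangent and normal components, so splitting the identity $\nabla^G_{dF(X)}\Phi(\underline{e}_j)=\Phi(\Gamma(X)(\underline{e}_j))$ into $TM$ and $E$ parts yields equations in which $B_F$ and the Weingarten operator appear together; you should first extend the identity to arbitrary sections $Z$ (the difference $\nabla^G_{dF(X)}\Phi(Z)-\Phi(\nabla_XZ)$ is $C^\infty(M)$-linear in $Z$, so the extension is immediate) and only then specialize to purely tangent $Z=Y$ and purely normal $Z=N$ to read off the Gauss and Weingarten identities separately. With that reordering the argument is complete.
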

\begin{proof}
For $X,Y\in\Gamma(TM\oplus E),$ we have
\begin{eqnarray*}
\langle \xi(X),\xi(Y)\rangle &=&-\frac{1}{2}\left(\xi(X)\xi(Y)+\xi(Y)\xi(X)\right)\\
&=&-\frac{1}{2}\left(\tau[\varphi][X][\varphi]\tau[\varphi][Y][\varphi]+\tau[\varphi][Y][\varphi]\tau[\varphi][X][\varphi]\right)\\
&=&-\frac{1}{2}\tau[\varphi]\left([X][Y]+[Y][X]\right)[\varphi]\\
&=&\langle X,Y\rangle,
\end{eqnarray*}
since $[X][Y]+[Y][X]=-2\langle[X],[Y]\rangle=-2\langle X,Y\rangle.$ This implies that $F$ is an isometry, and that $\Phi_E$ is a bundle map between $E$ and the normal bundle of $F(M)$ into $G$ which preserves the metrics of the fibers. Let us denote by $B_F$ and $\nabla'^F$ the second fundamental form and the normal connection of the immersion $F;$ the aim is now to prove that
\begin{equation}\label{xi preserves ff connection}
\xi(B(X,Y))=B_F(\xi(X),\xi(Y))\hspace{.5cm}\mbox{and}\hspace{.5cm}\xi(\nabla'_XN)=\nabla'^F_{\xi(X)}\xi(N)
\end{equation}
for $X,Y\in \Gamma(TM)$ and $N\in\Gamma(E).$ First,
$$B_F(\xi(X),\xi(Y))=(\nabla^G_{\xi(X)}\xi(Y))^N=\left\{\partial_X\ \xi(Y)+\Gamma(\xi(X))(\xi(Y))\right\}^N$$
where the superscript $N$ means that we consider the component of the vector which is normal to the immersion. We fix a point $x_0\in M,$ assume that $\nabla Y=0$ at $x_0,$ and compute, using (\ref{d xi computation}):
\begin{eqnarray*}
\partial_X\ \xi(Y)&=&\langle\langle Y\cdot\nabla_X\varphi,\varphi\rangle\rangle+\langle\langle Y\cdot\varphi,\nabla_X\varphi\rangle\rangle\\
&=&\langle\langle B(X,Y)\cdot\varphi,\varphi\rangle\rangle-\langle\langle\Gamma(X)(Y)\cdot\varphi,\varphi\rangle\rangle.
\end{eqnarray*}  
Since $\langle\langle B(X,Y)\cdot\varphi,\varphi\rangle\rangle=\xi(B(X,Y))$ is normal to the immersion, we get
$$\{\partial_X\ \xi(Y)\}^N=\xi(B(X,Y))-\langle\langle \Gamma(X)(Y)\cdot\varphi,\varphi\rangle\rangle^N,$$
and thus
\begin{eqnarray*}
B_F(\xi(X),\xi(Y))&=&\xi(B(X,Y))-\langle\langle \Gamma(X)(Y)\cdot\varphi,\varphi,\rangle\rangle^N+\Gamma(\xi(X))(\xi(Y))^N\\
&=&\xi(B(X,Y))
\end{eqnarray*}
since 
\begin{eqnarray*}
\langle\langle\Gamma(X)(Y)\cdot\varphi,\varphi\rangle\rangle&=&\xi(\Gamma(X)(Y))\\
&=&f(\Gamma(X)(Y))\\
&=&\Gamma(f(X))(f(Y))\hspace{.5cm}(\mbox{by definition of }\Gamma\mbox{ on }TM\oplus E)\\
&=&\Gamma(\xi(X))(\xi(Y)).
\end{eqnarray*}

We finally show the second identity in (\ref{xi preserves ff connection}): we have
\begin{eqnarray*}
\nabla'^F_{\xi(X)}\xi(N)&=&(\nabla^G_{\xi(X)}\xi(N))^N\\
&=&(\partial_X\ \xi(N)+\Gamma(\xi(X))(\xi(N)))^N\\
&=&\langle\langle\nabla'_XN\cdot\varphi,\varphi\rangle\rangle^N+\langle\langle N\cdot\nabla_X\varphi,\varphi\rangle\rangle^N+\langle\langle N\cdot\varphi,\nabla_X\varphi\rangle\rangle^N\\
&&+\Gamma(\xi(X))(\xi(N))^N.
\end{eqnarray*}
The first term in the right-hand side is $\xi(\nabla'_XN),$ and we only need to show that
\begin{equation}\label{lem F isometry expr}
\langle\langle N\cdot\nabla_X\varphi,\varphi\rangle\rangle^N+\langle\langle N\cdot\varphi,\nabla_X\varphi\rangle\rangle^N+\Gamma(\xi(X))(\xi(N)))^N=0.
\end{equation}
From (\ref{d xi computation}), we have
$$\langle\langle N\cdot\nabla_X\varphi,\varphi\rangle\rangle+\langle\langle N\cdot\varphi,\nabla_X\varphi\rangle\rangle=-\langle\langle B^*(X,N)\cdot\varphi,\varphi\rangle\rangle-\langle\langle\Gamma(X)(N)\cdot\varphi,\varphi\rangle\rangle,$$
which gives (\ref{lem F isometry expr}) since $\langle\langle B^*(X,N)\cdot\varphi,\varphi\rangle\rangle$ is tangent to the immersion ($B^*(X,N)$ belongs to $TM$) and
$$\langle\langle \Gamma(X)(N)\cdot\varphi,\varphi\rangle\rangle=\Gamma(\xi(X))(\xi(N))$$
(see the first part of the proof above).
\end{proof}

We finally show the converse statement $(2)\Rightarrow (1):$ we suppose that $F:M\rightarrow G$ is an isometric immersion with normal bundle $E$ and second fundamental form $B,$ we consider the orthonormal frame $s_o=\textit{1}_{SO(\mathcal{G})}$ of $\mathcal{G}$, and the spinor frame $\tilde{s}_o=\textit{1}_{Spin(\mathcal{G})}$ (recall that $Q_G=G\times SO(\mathcal{G})$ and $\tilde{Q}_G=G\times Spin(\mathcal{G})$; see Section \ref{sec preliminaries}). The spinor field $\varphi=[\tilde{s}_o,\textit{1}_{Cl(\mathcal{G})}]$ satisfies (\ref{killing equation}) as a consequence of the Gauss formulas (\ref{gauss formula})-(\ref{gauss formula Gamma}); moreover, its associated 1-form is, for all $X\in TM,$
$$\xi(X)=\langle\langle F_*X\cdot\varphi,\varphi\rangle\rangle=\tau [\varphi]\ [F_*X]\ [\varphi]=[F_*X],$$
where $[F_*X]\in\mathcal{G}$ represents $F_*X$ in $s_o,$ that is $[F_*X]=\omega_G(F_*X)$ ($\omega_G\in\Omega^1(G,\mathcal{G})$ is the Maurer-Cartan form of $G$). Thus $\xi=F^*\omega_G,$ that is $F=\int\xi.$ 
\begin{rem}\label{rmk congruence}
We proved in Proposition \ref{lem F isometry} that if $\varphi\in\Gamma(U\Sigma)$ is a solution of (\ref{killing equation}) such that $\xi_{\varphi}$ satisfies the structure equation (\ref{xi structure equation}) then $F=\int\xi_{\varphi}$ is an immersion with normal bundle $E$ and second fundamental form $B.$ By (\ref{xi phi g}) it is clear that if $a\in Spin(\mathcal{G})$ is such that $Ad(a^{-1}):\mathcal{G}\rightarrow\mathcal{G}\ \in SO(\mathcal{G}) $ is an automorphism of Lie algebra, then $\xi_{\varphi\cdot a}$ satisfies the structure equation too; in fact, the corresponding immersions $F_{\varphi}=\int\xi_\varphi$ and $F_{\varphi\cdot a}=\int\xi_{\varphi\cdot a}$ are linked by the following formula: if $\Phi_a:G\rightarrow G$ is the automorphism of $G$ such that $d(\Phi_a)_e=Ad(a^{-1}),$ then $\Phi_a$ is also an isometry for the left invariant metric, and
\begin{equation}\label{expr F phi g}
F_{\varphi\cdot a}=L_{b}\circ\Phi_a\circ F_{\varphi}
\end{equation}
for some $b$ belonging to $G.$ This relies on the following formula: if $\Phi:G\rightarrow G$ is an automorphism, $\omega_G\in\Omega^1(G,\mathcal{G})$ is the Maurer-Cartan form of $G$ and $F:M\rightarrow G$ is a smooth map, then 
$$(\Phi\circ F)^*\omega_G=d(\Phi)_e\circ (F^*\omega_G).$$
This formula applied to $\Phi=\Phi_a$ and $F=F_\varphi$ shows that $\Phi_a\circ F_{\varphi}$ is a solution of the Darboux equation associated to the form $\xi_{\varphi\cdot a};$ thus, by uniqueness of a solution of the Darboux equation, (\ref{expr F phi g}) holds for some $b$ belonging to $G.$
\end{rem}

\section{An application: the Fundamental Theorem for immersions in a metric Lie group}\label{section fundamental theorem}
We now show that the equations of Gauss, Ricci and Codazzi on $B$ are exactly the integrability conditions of (\ref{killing equation}). We recall these equations for immersions in the metric Lie group $G$: if $R^G$ denotes the curvature tensor of $(G,\langle.,.\rangle),$ and if $R^T$ and $R^N$ stand for the curvature tensors of the connections on $TM$ and on $E$ ($M$ is a submanifold of $G$ and $E$ is its normal bundle), then we have, for all $X,Y,Z\in\Gamma(TM)$ and $N\in\Gamma(E),$
\begin{enumerate}
\item the Gauss equation
\begin{equation}\label{Gauss equation G}
(R^G(X,Y)Z)^T=R^T(X,Y)Z-B^*(X,B(Y,Z))+B^*(Y,B(X,Z)),
\end{equation}
\item the Ricci equation
\begin{equation}\label{Ricci equation G}
(R^G(X,Y)N)^N=R^N(X,Y)N-B(X,B^*(Y,N))+B(Y,B^*(X,N)),
\end{equation}
\item the Codazzi equation
\begin{equation}\label{Codazzi equation G}
(R^G(X,Y)Z)^N=\tilde{\nabla}_X B(Y,Z)-\tilde{\nabla}_Y B(X,Z);
\end{equation}
\end{enumerate}
in the last equation, $\tilde{\nabla}$ denotes the natural connection on $T^*M\otimes T^*M\otimes E.$\\

These equations make sense if $M$ is an abstract manifold and $E\rightarrow M$ is an abstract bundle as in Section \ref{section main result}, if we assume the existence of the bundle map $f$ in (\ref{def bundle iso f}), since $f$ permits to define $\Gamma$ on $TM\oplus E$ by (\ref{def Gamma TM+E}), and $R^G$ may be written in terms of $\Gamma$ only (see (\ref{nablaG without torsion})-(\ref{curvature nablaG})). We prove the following:
\begin{prop}
We assume that $M$ is simply connected. There exists $\varphi\in\Gamma(U\Sigma)$ solution of (\ref{killing equation}) if and only if $B:TM\times TM\rightarrow E$ satisfies the Gauss, Ricci and Codazzi equations.
\end{prop}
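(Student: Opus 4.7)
The plan is to interpret (\ref{killing equation}) as the horizontality condition $\tilde\nabla_X\varphi=0$ for the modified connection
$$\tilde\nabla_X\varphi := \nabla_X\varphi + \tfrac{1}{2}\sum_{j=1}^p e_j\cdot B(X,e_j)\cdot\varphi - \tfrac{1}{2}\Gamma(X)\cdot\varphi$$
on $\Sigma$. Both correction terms act by Clifford multiplication by elements of $\Lambda^2(TM\oplus E)$, which through the isomorphism $f$ is identified with the Lie algebra $\Lambda^2\mathcal{G}$ of $Spin(\mathcal{G})$; hence $\tilde\nabla$ preserves the subbundle $U\Sigma\subset\Sigma$. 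Since $M$ is simply connected, a global section $\varphi\in\Gamma(U\Sigma)$ with $\tilde\nabla\varphi=0$ exists if and only if $\tilde\nabla$ is flat. Setting $\alpha(X):=-\frac{1}{2}\sum_j e_j\cdot B(X,e_j)+\frac{1}{2}\Gamma(X)$, a direct computation gives
$$R^{\tilde\nabla}(X,Y) = R^\nabla(X,Y) - (d^\nabla\alpha)(X,Y) + [\alpha(X),\alpha(Y)],$$
acting on $\varphi$ by Clifford multiplication; since elements of $U\Sigma$ are units in $Cl(\mathcal{G})$, this vanishes on $\varphi$ if and only if the bracketed Clifford element $\mathcal{R}(X,Y)\in Cl(TM\oplus E)$ vanishes identically.

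The remaining task is to expand $\mathcal{R}(X,Y)$ and decompose it along
$$\Lambda^2(TM\oplus E)=\Lambda^2 TM\oplus(TM\otimes E)\oplus\Lambda^2 E.$$
The spin curvature $R^\nabla$ contributes $\frac{1}{2}R^T$ and $\frac{1}{2}R^N$ to the first and third summands. The $\Gamma$-$\Gamma$ part of $[\alpha(X),\alpha(Y)]$ combines with $\nabla_X\Gamma(Y)-\nabla_Y\Gamma(X)$ to reconstruct the ambient curvature $R^G$ restricted to $TM\oplus E$, via formulas (\ref{curvature nablaG}) and (\ref{nablaG without torsion}) together with the hypothesis (\ref{nabla field invariant}) controlling the covariant derivatives of the left invariant frame. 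The $\Gamma$-$B$ cross commutators, together with the derivatives $\nabla_X\bigl(e_j\cdot B(Y,e_j)\bigr)$, reassemble into the Codazzi covariant derivatives $\tilde\nabla_X B(Y,\cdot)-\tilde\nabla_Y B(X,\cdot)$. Finally, the $B$-$B$ commutator, simplified via the appendix lemmas relating Clifford products of bivectors of the form $e_j\cdot B(X,e_j)$ to tensorial contractions, produces the shape-operator quadratic terms $B^*(\cdot,B(\cdot,\cdot))$ and $B(\cdot,B^*(\cdot,\cdot))$ appearing in Gauss and Ricci.

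Matching the three homogeneous components, the identity $\mathcal{R}(X,Y)=0$ splits exactly into the Gauss equation (\ref{Gauss equation G}) on the $\Lambda^2 TM$ piece, the Codazzi equation (\ref{Codazzi equation G}) on the $TM\otimes E$ piece, and the Ricci equation (\ref{Ricci equation G}) on the $\Lambda^2 E$ piece. Conversely, if these three structural equations hold then each homogeneous component of $\mathcal{R}$ vanishes, hence $\mathcal{R}=0$; by flatness of $\tilde\nabla$ and simple connectedness of $M$ there exists a global parallel section starting from any initial value in $U\Sigma$ at a base point, which remains in $U\Sigma$ since $\tilde\nabla$ preserves this subbundle. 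The main obstacle is the Clifford-algebraic bookkeeping in the middle step: each term in $[\alpha(X),\alpha(Y)]$ and $(d^\nabla\alpha)(X,Y)$ must be projected onto the correct homogeneous summand and matched with the corresponding tensorial quantity, for which the appendix identities converting Clifford products into contraction operators on $TM\oplus E$ are essential.
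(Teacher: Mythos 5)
Your proposal is correct and follows essentially the same route as the paper: the paper likewise introduces the modified connection $\nabla'_X=\nabla_X+\frac{1}{2}\sum_je_j\cdot B(X,e_j)-\frac{1}{2}\Gamma(X)$, computes its curvature acting on the invertible spinor $[\varphi]\in Spin(\mathcal{G})$, and identifies the $\Lambda^2TM$, $TM\otimes E$ and $\Lambda^2E$ components of the resulting bivector with the Gauss, Ricci and Codazzi equations, the converse following from flatness of $\nabla'$ viewed as a principal connection on $U\Sigma$ over the simply connected base. The only content you defer is the explicit Clifford bookkeeping (the paper's Lemmas \ref{computation AB}--\ref{computation other terms} and the appendix identities), which you correctly locate and attribute.
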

\begin{proof}
We first prove that the Gauss, Ricci and Codazzi equations are necessary if we have a non-trivial solution of (\ref{killing equation}). We assume that $\varphi\in\Gamma(U\Sigma)$ is a solution of (\ref{killing equation}) and compute the curvature 
\begin{equation*}
R(X,Y)\varphi=\nabla_X\nabla_Y\varphi-\nabla_Y\nabla_X\varphi-\nabla_{[X,Y]}\varphi.
\end{equation*}
We fix a point $x_0\in M,$ and assume that $\nabla X=\nabla Y=0$ at $x_0.$ We have
\begin{eqnarray*}
\nabla_X\nabla_Y\varphi&=&-\frac{1}{2}\sum_{j=1}^pe_j\cdot\left(\tilde{\nabla}_X B(Y,e_j)\cdot\varphi+B(Y,e_j)\cdot\nabla_X\varphi\right)\\
&&+\frac{1}{2}\left(\nabla_X\Gamma(Y)\cdot\varphi+\Gamma(Y)\cdot\nabla_X\varphi\right)\\
&=&-\frac{1}{2}\sum_{j=1}^pe_j\cdot\tilde{\nabla}_X B(Y,e_j)\cdot\varphi-\frac{1}{4}\sum_{j,k=1}^pe_j\cdot e_k\cdot B(Y,e_j)\cdot B(X,e_k)\\
&&-\frac{1}{4}\sum_{j=1}^p e_j\cdot B(Y,e_j)\cdot\Gamma(X)\cdot\varphi+\frac{1}{2}\nabla_X\Gamma(Y)\cdot\varphi-\frac{1}{4}\Gamma(Y)\cdot\sum_{j=1}^pe_j\cdot B(X,e_j)\cdot\varphi\\
&&+\frac{1}{4}\Gamma(Y)\cdot\Gamma(X)\cdot\varphi.
\end{eqnarray*}
Thus
\begin{eqnarray}
R(X,Y)\varphi&=&-\frac{1}{2}\sum_{j=1}^pe_j\cdot\left(\tilde{\nabla}_X B(Y,e_j)-\tilde{\nabla}_Y B(X,e_j)\right)\cdot\varphi\nonumber\\
&&\underbrace{+\frac{1}{4}\sum_{j\neq k} e_j\cdot e_k\cdot \left(B(X,e_j)\cdot B(Y,e_k)-B(Y,e_j)\cdot B(X,e_k)\right)}_{\mathcal{A}}\cdot\varphi\label{R function B}\\
&&\underbrace{-\frac{1}{4}\sum_{j=1}^p\left(B(X,e_j)\cdot B(Y,e_j)-B(Y,e_j)\cdot B(X,e_j)\right)}_{\mathcal{B}}\cdot\varphi\nonumber\\
&&\underbrace{+\frac{1}{2}\left[\sum_{j=1}^pe_j\cdot B(X,e_j),\Gamma(Y)\right]}_{\mathcal{C}_1}\cdot \varphi +\underbrace{-\frac{1}{2}\left[\sum_{j=1}^pe_j\cdot B(Y,e_j),\Gamma(X)\right]}_{\mathcal{C}_2}\cdot \varphi \nonumber \\
&& \underbrace{+{\frac{1}{2}(\nabla_X\Gamma(Y)-\nabla_Y\Gamma(X))}}_{\mathcal{C}_3}\cdot \varphi+\underbrace{ -\frac{1}{2} [\Gamma(X),\Gamma(Y)]}_{\mathcal{C}_4}\cdot \varphi \nonumber
\end{eqnarray}
where the brackets stand for the commutator in the Clifford bundle $Cl(TM\oplus E):$ $\forall\eta,\xi\ \in Cl(TM\oplus E),$
$$[\eta,\xi]=\frac{1}{2}\left(\eta\cdot\xi-\xi\cdot\eta\right).$$
We computed the second and the third terms in \cite{BLR2}; we only recall the result here:
\begin{lem}\cite{BLR2} \label{computation AB} 
We have
$$\mathcal{A}=\frac{1}{2}\sum_{j< k}\left\{\langle B^*(X,B(Y,e_j)),e_k\rangle- \langle B^*(Y,B(X,e_j)),e_k\rangle\right\}e_j\cdot e_k$$
and
$$\mathcal{B}=\frac{1}{2}\sum_{k<l}\left\langle B(X,B^*(Y,n_k))-B(Y,B^*(X,n_k)),n_l\right\rangle n_k\cdot n_l,$$
where $e_1,\ldots,e_p$ and $n_1,\ldots,n_q$ are orthonormal bases of $TM$ and $E.$
\end{lem}
We now compute the other terms in (\ref{R function B}). We first compute the covariant derivative of $\Gamma,$ considering $\Gamma$ as a map
$$\Gamma:\hspace{.5cm} TM\oplus E\rightarrow End(TM\oplus E).$$
\begin{lem}\label{lemma formula nabla gamma}
If $X,Y\in TM$ and $Z\in TM\oplus E,$
\begin{eqnarray*}
(\nabla_X\Gamma)(Y)Z&=&\left\{\Gamma(X)\circ\Gamma(Y)-\Gamma(Y)\circ\Gamma(X)\right\}(Z)-\Gamma(\Gamma(X)Y)(Z)+\Gamma(B(X,Y))(Z)\\
&&-B(X,(\Gamma(Y)Z)^{T})+B^{*}(X,(\Gamma(Y)Z)^{N})+\Gamma(Y)(B(X,Z^T)-B^{*}(X,Z^N)).
\end{eqnarray*}
\end{lem}
\begin{proof}
Since the expression is tensorial, we may assume that $X,Y,Z\in \Gamma(TM\oplus E)$ are left invariant vector fields. By definition,
\begin{equation}\label{edo1_nabla}
(\nabla_{X}\Gamma)(Y)Z=\nabla_X(\Gamma(Y)Z) -\Gamma(\nabla_X Y)Z-\Gamma(Y)(\nabla_X Z).
\end{equation}
Since $X, Y$ and $Z$ are left invariant vector fields, so are $\Gamma(Y)Z,$ $\nabla_X Y$ and $\nabla_X Z,$ and, by (\ref{nabla field invariant}), 
 \begin{eqnarray*}
\nabla_X(\Gamma(Y)Z)=\Gamma(X)(\Gamma(Y)Z)-B(X,(\Gamma(Y)Z)^{T})+B^*(X,(\Gamma(Y)Z)^{N}),
\end{eqnarray*}
\begin{eqnarray*}
\Gamma(Y)(\nabla _X Z)=\Gamma(Y)(\Gamma(X)Z)-\Gamma(Y)B(X,Z^T)+\Gamma(Y)B^*(X,Z^N)
\end{eqnarray*}
and
\begin{eqnarray*}
\Gamma(\nabla _X Y)(Z)=\Gamma(\Gamma(X)Y)Z-\Gamma(B(X,Y^T))Z+\Gamma(B^*(X,Y^N))Z.
\end{eqnarray*}
Plugging these formulas in (\ref{edo1_nabla}) and using finally that $Y$ belongs to $TM$ (ie $Y^T=Y$ and $Y^N=0$), we get the result.
\end{proof}
We now regard $\Gamma$ as a map
$$\Gamma:\hspace{.5cm}TM\oplus E\rightarrow\Lambda^2(TM\oplus E)\subset Cl(TM\oplus E),$$
and compute the term $\mathcal{C}_3$ in (\ref{R function B}). According to Lemma \ref{lem1 ap1}, for all $X,Y\in TM\oplus E,$ 
$$\Gamma(X)(Y)=\left[\Gamma(X),Y\right].$$
\begin{lem}
If $X,Y\in TM,$
\begin{eqnarray*}
\frac{1}{2}\left((\nabla_X\Gamma)(Y)-(\nabla_Y\Gamma)(X)\right)&=&[\Gamma(X),\Gamma(Y)]-\frac{1}{2}\Gamma([\Gamma(X),Y]-[\Gamma(Y),X])\\
&&-\frac{1}{2}\left[\sum_{j=1}^pe_j\cdot B(X,e_j),\Gamma(Y)\right]+\frac{1}{2}\left[\sum_{j=1}^pe_j\cdot B(Y,e_j),\Gamma(X)\right].
\end{eqnarray*}
Here the brackets stand for the commutator in $Cl(TM\oplus E).$ 
\end{lem}
\begin{proof}
By Lemmas \ref{lem1 ap1} and \ref{lem2 ap1} in the appendix, the linear maps $\Gamma(X)\circ\Gamma(Y)-\Gamma(Y)\circ\Gamma(X),$ $Z\mapsto\Gamma(\Gamma(X)Y)Z$ and $Z\mapsto \Gamma(B(X,Y))Z$ appearing in Lemma \ref{lemma formula nabla gamma} are respectively represented by the bivectors  $[\Gamma(X),\Gamma(Y)],$ $\Gamma([\Gamma(X),Y])$ and $\Gamma(B(X,Y)).$ Moreover, by Lemma \ref{lem4 ap1} applied to the linear maps $B(X,.):TM\rightarrow E$ and $\Gamma(Y):TM\oplus E\rightarrow TM\oplus E,$ the map
$$Z\mapsto -B^{*}(X,(\Gamma(Y)Z)^{N})+\Gamma(Y)(B^{*}(X,Z^N))+B(X,(\Gamma(Y)Z)^{T})-\Gamma(Y)(B(X,Z^T))$$
is represented by the bivector
$$\left[\sum_{j=1}^pe_j\cdot B(X,e_j),\Gamma(Y)\right]\hspace{.3cm}\in Cl(TM\oplus E).$$
The result follows.
\end{proof}
We readily deduce the sum of the last four terms in (\ref{R function B}):
\begin{lem}\label{computation other terms}
Let us set, for $X,Y\in TM,$
$$R^G(X,Y)=[\Gamma(X),\Gamma(Y)]-\Gamma\left\{[\Gamma(X),Y]-[\Gamma(Y),X]\right\}\ \in\Lambda^2(TM\oplus E),$$
the curvature tensor of $G,$ pulled-back to $TM\oplus E$ by the bundle isomorphism $f$ introduced in (\ref{def bundle iso f}).
Then
$$\mathcal{C}_1+\mathcal{C}_2+\mathcal{C}_3+\mathcal{C}_4= \frac{1}{2}R^G(X,Y).$$
\end{lem}
We thus get from (\ref{R function B}) the formula
\begin{eqnarray}
R(X,Y)\varphi&=&-\frac{1}{2}\sum_{j=1}^pe_j\cdot\left(\tilde{\nabla}_X B(Y,e_j)-\tilde{\nabla}_Y B(X,e_j)\right)\cdot\varphi\label{R function B 2}\\
&&+\mathcal{A}\cdot\varphi+\mathcal{B}\cdot\varphi+\frac{1}{2}R^G(X,Y)\cdot\varphi\nonumber
\end{eqnarray}
where $\mathcal{A}$ and $\mathcal{B}$ are computed in Lemma \ref{computation AB} and $R^G$ may be conveniently written in the form
\begin{eqnarray*}
R^G(X,Y)&=&\sum_{1\leq j<k\leq p}\langle R^G(X,Y)(e_j),e_k\rangle e_j\cdot e_k\\
&&+\sum_{j=1}^{p}\sum_{r=1}^{q}\langle R^G(X,Y)(e_j),n_r\rangle e_j\cdot n_r\\
&&+\sum_{1\leq r<s\leq q}\langle R^G(X,Y)(n_s),n_r\rangle n_r\cdot n_s.
\end{eqnarray*}
On the other hand, the curvature of the spinorial connection is given by
\begin{eqnarray}
R(X,Y)\varphi&=&\frac{1}{2}\left(\sum_{1\leq j<k\leq p}\langle R^T(X,Y)(e_j),e_k\rangle\ e_j\cdot e_k\right.\label{R function RT RN}\\
&&+\left.\sum_{1\leq r<s\leq q}\langle R^N(X,Y)(n_r),n_s\rangle\ n_r\cdot n_s\right)\cdot\varphi.\nonumber
\end{eqnarray}
We now compare the expressions (\ref{R function B 2}) and (\ref{R function RT RN}): since in a given frame $\tilde{s}$ belonging to $\tilde{Q},$ $\varphi$ is represented by an element which is invertible in $Cl(\mathcal{G})$ (it is in fact represented by an element belonging to $Spin(\mathcal{G})$), we may identify the coefficients and get
$$\langle R^T(X,Y)(ej),e_k\rangle =\langle B^*(X,B(Y,e_j)),e_k\rangle- \langle B^*(Y,B(X,e_j)),e_k\rangle+\langle R^G(X,Y)(e_j),e_k\rangle,$$
$$\langle R^N(X,Y)(n_r),n_s\rangle=\langle B(X,B^*(Y,n_r)),n_s\rangle-\langle B(Y,B^*(X,n_r)),n_s\rangle+\langle R^G(X,Y)(n_r),n_s\rangle$$
and
$$\langle\tilde{\nabla}_X B(Y,e_j)-\tilde{\nabla}_Y B(X,e_j),n_r\rangle=\langle R^G(X,Y)(e_j),n_r\rangle$$
for all the indices. These equations are the equations of Gauss, Ricci and Codazzi. 
\\

We now prove that the equations of Gauss, Ricci and Codazzi are also sufficient to get a solution of (\ref{killing equation}). The calculations above in fact show that the connection on $\Sigma$ defined by
\begin{equation}\label{def nabla prime}
\nabla'_X\varphi:=\nabla_X\varphi+\frac{1}{2}\sum_{j=1}^pe_j\cdot B(X,e_j)\cdot\varphi-\frac{1}{2}\Gamma(X)\cdot\varphi
\end{equation}
for all $\varphi\in\Gamma(\Sigma)$ and $X\in\Gamma(TM)$ is flat if and only if the equations of Gauss, Ricci and Codazzi hold. But if this connection is flat there exists a solution $\varphi\in \Gamma(U\Sigma)$ of (\ref{killing equation}); this is because $\nabla'$ may be also interpreted as a connection on $U\Sigma$ regarded as a principal bundle (of group $Spin(\mathcal{G}),$ acting on the right): indeed, $\nabla$ defines such a connection (since it comes from a connection on $\tilde{Q}$), and the right hand side term in (\ref{def nabla prime}) defines a linear map
\begin{eqnarray*}
TM&\rightarrow&\chi^{inv}_V(U\Sigma)\\
X&\mapsto&\varphi\mapsto \frac{1}{2}\sum_{j=1}^pe_j\cdot B(X,e_j)\cdot\varphi-\frac{1}{2}\Gamma(X)\cdot\varphi
\end{eqnarray*}
from $TM$ to the vector fields on $U\Sigma$ which are vertical and invariant under the action of the group (these vector fields are of the form $\varphi\mapsto\eta\cdot\varphi,$ $\eta\in\Lambda^2(TM\oplus E)\subset Cl(TM\oplus E)).$ Assuming that the equations of Gauss, Codazzi and Ricci hold, we thus get a solution  $\varphi\in \Gamma(U\Sigma)$ of (\ref{killing equation}).
\end{proof}
The considerations above give a spinorial proof of the Fundamental Theorem of submanifold theory in the metric Lie group $G$ (see \cite{PT} for another proof). We keep the hypotheses and notation of the beginning of Section \ref{section main result}.
\begin{cor} 
We moreover assume that $M$ is simply connected and that $B:TM\times TM\rightarrow E$ satisfies the equations of Gauss, Codazzi and Ricci (\ref{Gauss equation G})-(\ref{Codazzi equation G}). Then there is an isometric immersion of $M$ into $G$ with normal bundle $E$ and second fundamental form $B.$ The immersion is unique up to a rigid motion in $G,$ that is up to a transformation of the form
\begin{eqnarray}
L_{b}\circ\Phi_a:\hspace{.5cm}G&\rightarrow& G\label{rigid motion}\\
g&\mapsto& b\Phi_a(g)\nonumber
\end{eqnarray}
where $a\in Spin(\mathcal{G})$ is such that $Ad(a):\ \mathcal{G}\rightarrow\mathcal{G}$ is an automorphism of Lie algebra, $\Phi_a:G\rightarrow G$ is the group automorphism such that $d(\Phi_a)_e=Ad(a),$ and $b$ belongs to $G.$ 
\end{cor}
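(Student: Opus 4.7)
The Corollary follows by combining two earlier results in a direct way. For existence, the Proposition just established shows that the Gauss, Ricci and Codazzi equations produce a section $\varphi\in\Gamma(U\Sigma)$ solving the Killing-type equation (\ref{killing equation}), and Theorem \ref{thm main result} then converts such a $\varphi$ into the desired isometric immersion $F=\int\xi_{\varphi}:M\to G$ with normal bundle $E$ and second fundamental form $B$. So the plan for existence is a one-line composition of those two statements.

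For uniqueness, suppose $F_1,F_2:M\to G$ are two isometric immersions with the same normal bundle $E$ and same second fundamental form $B$. The converse direction $(2)\Rightarrow(1)$ of Theorem \ref{thm main result} yields Killing spinors $\varphi_1,\varphi_2\in\Gamma(U\Sigma)$ with $F_i=\int\xi_{\varphi_i}$. I would fix a basepoint $x_0\in M$ and choose $a\in Spin(\mathcal{G})$ so that $\varphi_1(x_0)\cdot a=\varphi_2(x_0)$; this is possible because the fibre $U\Sigma_{x_0}$ is a $Spin(\mathcal{G})$-torsor under the right action (\ref{def right action}). Since the modified connection $\nabla'$ in (\ref{def nabla prime}) is flat (that is exactly the content of the previous proposition read in the other direction) and is preserved by right multiplication by a constant element of $Spin(\mathcal{G})$, the section $\varphi_1\cdot a$ is itself $\nabla'$-parallel; as $M$ is simply connected it must then coincide with $\varphi_2$ everywhere.

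By (\ref{xi phi g}) we then have $\xi_{\varphi_2}=Ad(a^{-1})\circ\xi_{\varphi_1}$. Both of these $\mathcal{G}$-valued 1-forms satisfy the Maurer-Cartan structure equation $d\xi+[\xi,\xi]=0$, and since $\xi_{\varphi_1}$ is pointwise surjective onto $\mathcal{G}$ (through the bundle isomorphism $f$ extending it to $TM\oplus E$), comparing the two structure equations forces $Ad(a^{-1})$ to preserve the Lie bracket on $\mathcal{G}$, i.e.\ to be a Lie algebra automorphism. Remark \ref{rmk congruence} then applies verbatim to give $F_2=L_b\circ\Phi_a\circ F_1$ for some $b\in G$, and a direct verification shows that any such composition preserves the isometric immersion data, closing the uniqueness statement.

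The main obstacle I expect is the identification of the space of Killing spinors in $U\Sigma$ with a $Spin(\mathcal{G})$-torsor — which requires interpreting $\nabla'$ as a genuine principal connection on $U\Sigma$ rather than merely a linear connection on $\Sigma$ — together with the subsequent extraction of the Lie algebra automorphism property of $Ad(a^{-1})$ from the joint Darboux integrability of $\xi_{\varphi_1}$ and $\xi_{\varphi_2}$; the remaining steps are essentially packaged into Remark \ref{rmk congruence}.
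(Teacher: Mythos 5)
Your existence argument and the overall uniqueness strategy coincide with the paper's: the preceding proposition supplies a solution $\varphi$ of (\ref{killing equation}), Theorem \ref{thm main result} converts it into the immersion, and two solutions of (\ref{killing equation}) differ by the right action (\ref{def right action}) of a constant $a\in Spin(\mathcal{G})$ because they are parallel sections of the flat principal connection $\nabla'$ of (\ref{def nabla prime}); the passage from $\varphi_2=\varphi_1\cdot a$ to $F_2=L_b\circ\Phi_a\circ F_1$ is then exactly Remark \ref{rmk congruence}. Up to that point your write-up is sound and in fact more detailed than the paper's own proof.

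The one step that does not work as you justify it is the extraction of the Lie-algebra-automorphism property of $Ad(a^{-1})$ from the joint Darboux integrability of $\xi_{\varphi_1}$ and $\xi_{\varphi_2}$. The structure equation $d\xi+[\xi,\xi]=0$ only involves the restriction of $\xi$ to $TM$, whose image at a point $x$ is the $p$-dimensional subspace $f(T_xM)\subset\mathcal{G}$, not all of $\mathcal{G}$; the bundle isomorphism $f$ of (\ref{def bundle iso f}) does extend $\xi$ to $TM\oplus E$, but that extension plays no role in the structure equation. Comparing the structure equation for $\xi_{\varphi_2}=Ad(a^{-1})\circ\xi_{\varphi_1}$ with that for $\xi_{\varphi_1}$ therefore only yields $Ad(a^{-1})[u,v]=[Ad(a^{-1})u,Ad(a^{-1})v]$ for $u,v$ ranging over the tangent planes $f(T_xM)$, $x\in M$, which is far from bracket preservation on all of $\mathcal{G}$ --- for $p=1$ it is no condition at all, since both sides vanish identically. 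So your ``pointwise surjective'' claim is false and the deduction collapses. To be fair, the paper's own proof does not establish this converse either: it only records that right multiplication by an $a$ with $Ad(a)$ a Lie algebra automorphism, together with the integration constant $b$, produces immersions related by the rigid motions (\ref{rigid motion}), i.e.\ it proves the easy implication and leaves unaddressed the classification of all pairs of immersions with the same data, which is precisely the point you are trying, unsuccessfully, to make rigorous. You should either restrict the uniqueness claim to that weaker form or supply a genuinely different argument (for instance via the structure of the isometry group of $(G,\langle.,.\rangle)$) for why the ambient congruence must be of the form $L_b\circ\Phi_a$.
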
 
\begin{proof}
The equations of Gauss, Codazzi and Ricci are the integrability conditions of (\ref{killing equation}). We thus get a solution $\varphi\in \Gamma(U\Sigma)$ of (\ref{killing equation}); with such a spinor field at hand, $F=\int\xi$ where $\xi$ is defined in (\ref{def xi}) is the immersion. Finally, a solution of (\ref{killing equation}) is unique up to the right action of an element of $Spin(\mathcal{G})$; the right multiplication of $\varphi$ by $a\in Spin(\mathcal{G})$ and the left multiplication by $b\in G$ in the last integration give also an immersion, if $Ad(a):\mathcal{G}\rightarrow\mathcal{G}$ is moreover an automorphism of Lie algebra. This immersion is obtained from the immersion defined by $\varphi$ by a rigid motion, as described in (\ref{rigid motion}).
\end{proof}
\begin{rem}
In $\R^n,$ a rigid motion as in (\ref{rigid motion}) is a transformation of the form
\begin{eqnarray*}
\R^n&\rightarrow&\R^n\\
x&\mapsto&ax+b,
\end{eqnarray*}
with $a\in SO(n)$ and $b\in\R^n.$
\end{rem}
\section{Special cases}\label{section special cases}
\subsection{Submanifolds in $\R^n$}\label{section Rn}
If the metric Lie group is $\R^n$ with its natural metric, we recover the main result of \cite{BLR2}. We suppose that $M$ is a $p$-dimensional Riemannian manifold, $E\rightarrow M$ a bundle of rank $q,$ with a fibre metric and a compatible connection. We assume that $TM$ and $E$ are oriented and spin with given spin structures, and that $B:TM\times TM\rightarrow E$ is bilinear and symmetric.
\begin{thm}\label{theorem section Rn}\cite{BLR2}
We moreover assume that $M$ is simply connected. The following statements are equivalent:
\begin{enumerate}
\item There exists a section $\varphi\in\Gamma(U\Sigma)$ such that
\begin{equation}\label{killing equation Rn}
\nabla_X\varphi=-\frac{1}{2}\sum_{j=1}^pe_j\cdot B(X,e_j)\cdot\varphi
\end{equation}
for all $X\in TM.$
\item There exists an isometric immersion $F:\ M\rightarrow \R^n$ with normal bundle $E$ and second fundamental form $B.$\\
Moreover, $F=\int\xi$ where $\xi$ is the $\R^n-$valued 1-form defined by
\begin{equation}\label{def xi Rn}
\xi(X):=\langle\langle X\cdot\varphi,\varphi\rangle\rangle
\end{equation}
for all $X\in TM.$
\end{enumerate}
\end{thm}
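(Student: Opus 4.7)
The plan is to obtain Theorem \ref{theorem section Rn} as the special case $G=\R^n$ of Theorem \ref{thm main result}, fully exploiting the abelian Lie algebra structure. By the Koszul formula (\ref{koszul formula}), since $[\cdot,\cdot]\equiv 0$ on $\R^n$, we have $\Gamma\equiv 0$; consequently equation (\ref{killing equation}) collapses to (\ref{killing equation Rn}), the structure equation $d\xi+[\xi,\xi]=0$ becomes simply $d\xi=0$, and Darboux integration of $\xi$ against the Maurer-Cartan form reduces to ordinary integration of a closed $\R^n$-valued 1-form.

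For the implication $(2)\Rightarrow (1)$, I would take $\varphi$ to be the restriction to $M$ of the constant spinor field $[\tilde{s}_o,1_{Cl(\R^n)}]$ on $\R^n$ (where $\tilde{s}_o=\textit{1}_{Spin(\R^n)}$). The Gauss formula (\ref{gauss formula}), together with (\ref{nabla spinor invariant}) and $\Gamma\equiv 0$, immediately yields (\ref{killing equation Rn}). The associated 1-form is then $\xi=F^*\omega_{\R^n}=dF$, recovering the integral representation.

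For $(1)\Rightarrow (2)$, I would mimic Propositions \ref{lem xi closed} and \ref{lem F isometry}, but in a much simplified setting where the bundle isomorphism $f$ and condition (\ref{nabla field invariant}) are entirely bypassed. Defining $\xi(X)=\langle\langle X\cdot\varphi,\varphi\rangle\rangle$, the argument of Proposition \ref{lem xi closed}(\ref{lem xi closed 1}) still shows that $\xi$ takes values in $\R^n\subset Cl(\R^n)$. For closedness, I would compute $d\xi(X,Y)$ at a point where $\nabla X=\nabla Y=0$: using the Killing equation (\ref{killing equation Rn}) and the algebraic manipulation of (\ref{d xi computation}) but without any $\Gamma$-terms (so invoking only Lemmas \ref{lem1 ap1} and \ref{lem3 ap1} applied to the $B$-part), I obtain $\partial_X\xi(Y)=\langle\langle B(X,Y)\cdot\varphi,\varphi\rangle\rangle=\xi(B(X,Y))$, and the symmetry of $B$ gives $d\xi=0$. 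Since $M$ is simply connected, integration produces $F\colon M\to\R^n$ with $dF=\xi$.

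The verification that $F$ is an isometric immersion with normal bundle $E$ and second fundamental form $B$ follows the proof of Proposition \ref{lem F isometry} almost verbatim, with significant simplifications: all occurrences of $\Gamma$ vanish identically, so the identities $B_F(\xi(X),\xi(Y))=\xi(B(X,Y))$ and $\nabla'^F_{\xi(X)}\xi(N)=\xi(\nabla'_XN)$ reduce to direct consequences of the Killing equation. I do not expect any real obstacle; the conceptual point is simply to observe that in the flat case, Proposition \ref{lem xi closed}(\ref{lem xi closed 2}) (which aligns $\xi$ with the pre-existing $f$ via some $T\in SO(\mathcal{G})$) can be skipped entirely, because $\xi$ itself plays the role of $f$, and its values on $M$ automatically furnish the required trivialization $TM\oplus E\to M\times\R^n$.
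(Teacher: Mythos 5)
Your proof is correct, and it reaches the result by a mildly but genuinely different route than the paper. The paper proves only $(1)\Rightarrow(2)$, and does so by \emph{reduction} to Theorem \ref{thm main result}: it defines the trivialization $f(Z):=\langle\langle Z\cdot\varphi,\varphi\rangle\rangle$ on all of $TM\oplus E$ (exactly your extended $\xi$), notes that the induced $\Gamma$ vanishes, and then verifies that the standing hypothesis (\ref{nabla field invariant}) holds for every section $Z$ with $f(Z)$ constant --- precisely via the computation (\ref{d xi computation}) with $\Gamma=0$ --- after which Theorem \ref{thm main result} applies as a black box. You instead bypass the hypotheses of the main theorem entirely and re-run the proofs of Propositions \ref{lem xi closed} and \ref{lem F isometry} in the flat setting: $\xi$ is $\R^n$-valued, $d\xi=0$ follows from the symmetry of $B$ via Lemmas \ref{lem1 ap1} and \ref{lem3 ap1}, and the isometry, second fundamental form and normal connection identities survive with every $\Gamma$-term deleted. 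Both routes hinge on the identical key computation, and your closing observation that ``$\xi$ itself plays the role of $f$'' is exactly the paper's construction; the difference is that the paper packages this as a verification of hypotheses (shorter, but logically routed through the alignment steps (\ref{lem xi closed 2})--(\ref{lem xi closed 3}) of Proposition \ref{lem xi closed}, which become vacuous since $T=\iid$ for this choice of $f$), whereas your version makes explicit that those steps, and the whole structure-equation apparatus, degenerate when $\mathcal{G}$ is abelian. Your $(2)\Rightarrow(1)$ via restriction of the constant spinor and the Gauss formula (\ref{gauss formula}) is the argument given at the end of Section \ref{section main result}; the paper omits it here, but it is correct.
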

\begin{proof}
We only prove $(1)\Rightarrow(2).$ This will be a consequence of Theorem \ref{thm main result} if we may define a bundle map $f$ as in (\ref{def bundle iso f}) such that (\ref{nabla field invariant}) holds. We assume that $\varphi$ is a solution of (\ref{killing equation Rn}), and set
\begin{eqnarray*}
f:\hspace{.5cm}TM\oplus E&\rightarrow& M\times\R^n\\
Z&\mapsto& \langle\langle Z\cdot\varphi,\varphi\rangle\rangle.
\end{eqnarray*}
The map $\Gamma$ defined by (\ref{def Gamma TM+E}) is $\Gamma=0.$ We now show that (\ref{nabla field invariant}) is satisfied for every $Z\in\Gamma(TM\oplus E)$ such that $f(Z):M\rightarrow\R^n$ is a constant map: for all $X\in TM,$ we have $\partial_X \{f(Z)\}=0,$ which reads
$$\langle\langle\nabla_XZ\cdot\varphi,\varphi\rangle\rangle+\langle\langle Z\cdot\nabla_X\varphi,\varphi\rangle\rangle+\langle\langle Z\cdot\varphi,\nabla_X\varphi\rangle\rangle=0.$$
But (\ref{killing equation Rn}) gives
$$\langle\langle Z\cdot\nabla_X\varphi,\varphi\rangle\rangle+\langle\langle Z\cdot\varphi,\nabla_X\varphi\rangle\rangle=\langle\langle \{B(X,Z^T)-B^*(X,Z^N)\}\cdot\varphi,\varphi\rangle\rangle$$
(see the computations in (\ref{d xi computation}) with $\Gamma=0$). Thus
$$\langle\langle\nabla_XZ\cdot\varphi,\varphi\rangle\rangle=\langle\langle \{-B(X,Z^T)+B^*(X,Z^N)\}\cdot\varphi,\varphi\rangle\rangle$$
and 
$$\nabla_XZ=-B(X,Z^T)+B^*(X,Z^N),$$
which is (\ref{nabla field invariant}) with $\Gamma=0.$
\end{proof}
\subsection{Submanifolds in $\mathbb{H}^n$}
Spinor representations of submanifolds in $\HH^n$ with its natural metric were already given in \cite{Mo,BLR,BLR2}. We give here another representation using the group structure of $\HH^n,$ with an arbitrary left invariant metric. 
Let us set
$$\HH^n=\{a=(a',a_n)\ \in\R^n:\ a_n>0\},$$
and, for $a\in \HH^n,$ the transformation 
\begin{eqnarray*}
\varphi_{a}:\hspace{.5cm}\R^{n-1}&\rightarrow&\R^{n-1}\\
x&\mapsto& a_n x+a';
\end{eqnarray*}
$\varphi_a$ is an homothety composed by a translation. The homotheties composed by translations naturally form a group under composition, and the bijection
\begin{eqnarray*}
\varphi:\hspace{.5cm}\HH^n&\rightarrow& \{\mbox{homotheties-translations }\R^{n-1}\rightarrow\R^{n-1}\}\\
a&\mapsto& \varphi_a
\end{eqnarray*}
induces a group structure on $\HH^n:$ it is such that
\begin{equation}\label{prod Hn}
ab=(a_nb'+a',a_nb_n)
\end{equation}
for all $a,b\in\HH^n;$ the identity element is $e=(0,1)\in\HH^n.$ Let us denote by $(e_1^o,e_2^o,\ldots,e_n^o)$ the canonical basis of $T_e\HH^n=\R^n$ and keep the same letters to denote the corresponding left invariant vector fields on $\HH^n$. The Lie bracket may be easily seen to be given by
$$[e_i^o,e_j^o]=0\hspace{.5cm}\mbox{and}\hspace{.5cm} [e_n^o,e_i^o]=e_i^o$$
for $i,j=1,\ldots,n-1.$ This may also be written in the form
\begin{equation}\label{def bracket Rn}
[X,Y]=l(X)Y-l(Y)X
\end{equation}
for all $X,Y\in \R^n,$ where $l:\R^n\rightarrow\R$ is the linear form such that $l(e_i^o)=0$ if $i\leq n-1$ and $l(e_n^o)=1.$ This property implies that every left invariant metric on $\HH^n$ has constant negative curvature $-|l|^2$ \cite{Mi, MP}. 
\\

We suppose that a left invariant metric $\langle.,.\rangle$ is given on $\HH^n,$ and consider the vector $U_o\in T_e\HH^n$ such that $l(X)=\langle U_o,X\rangle$ for all $X\in T_e\HH^n.$ We have $|U_o|=|l|,$ and, by the Koszul formula (\ref{koszul formula}), 
\begin{equation}\label{expr Gamma Hn}
\Gamma(X)(Y)=-\langle Y,U_o\rangle X+\langle X,Y\rangle U_o
\end{equation}
for all $X,Y\in T_e\HH^n.$ 
\\

We keep the hypotheses made at the beginning of Section \ref{section Rn}. We suppose moreover that $U\in\Gamma(TM\oplus E)$ is given such that $|U|=|l|$ and, for all $X\in TM,$
\begin{equation}\label{nabla U Hn}
\nabla_X U=-|U|^2X+\langle X,U\rangle U-B(X,U^T)+B^*(X,U^N).
\end{equation}
We set, for $X\in TM$ and $Y\in TM\oplus E,$
\begin{equation}\label{def Gamma Hn}
\Gamma(X)(Y)=-\langle Y,U\rangle X+\langle X,Y\rangle U.
\end{equation}
\begin{rem}\label{rem U solution eqn Z}
Equation (\ref{nabla U Hn}) implies the following:
\begin{enumerate}
\item $U$ is a solution of (\ref{nabla field invariant}), with the definition (\ref{def Gamma Hn}) of $\Gamma.$
\item The norm of $U$ is constant, since, by a straightforward computation, 
$$d|U|^2(X)=2\langle \nabla_XU,U\rangle=0$$
for all $X\in TM.$ The additional hypothesis $|U|=|l|$ is thus not very restrictive.
\end{enumerate}
We note that it is not necessary to assume the existence of $U$ solution of (\ref{nabla U Hn}) to get a spinor representation of a submanifold in $\HH^n$ if $\HH^n$ is regarded as the set of unit vectors in Minkowski space $\R^{n,1}$ \cite{Mo,BLR,BLR2}. Nevertheless, this hypothesis seems necessary if we consider $\HH^n$ as a group, since the group structure introduces an anisotropy: the vector $e_n\in T_e\HH^n$ is indeed a special direction for the group structure. 
\end{rem}
Let us construct the spinor bundles $\Sigma$ and $U\Sigma$ on $M$ as in Section \ref{section twisted spinor bundle} with here $\mathcal{G}=T_e\HH^n.$ 
\begin{thm}
We assume that $M$ is simply connected. The following statements are equivalent:
\begin{enumerate}
\item There exists a spinor field $\varphi\in\Gamma(U\Sigma)$ solution of (\ref{killing equation}) where $\Gamma$ is defined by (\ref{def Gamma Hn}).
\item There exists an isometric immersion $M\rightarrow \HH^n$  with normal bundle $E$ and second fundamental form $B.$ 
\end{enumerate}
\end{thm}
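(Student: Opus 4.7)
The approach is to reduce the theorem to Theorem \ref{thm main result} by constructing the bundle isomorphism $f: TM \oplus E \to M \times \mathcal{G}$ required by its structural hypotheses and verifying both the algebraic condition (\ref{def Gamma TM+E}) and the differential condition (\ref{nabla field invariant}). The key observation is that, because $\Gamma$ on $\mathcal{G}$ has the particularly simple form (\ref{expr Gamma Hn}), condition (\ref{def Gamma TM+E}) reduces, once $f$ is known to preserve the fibre metrics, to the single alignment requirement $f(U) = U_o$.

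For the easier direction $(2) \Rightarrow (1)$, an isometric immersion $F: M\to\HH^n$ supplies $f$ canonically via the Maurer--Cartan form of $\HH^n$, and setting $U := f^{-1}(U_o)$ one recovers (\ref{nabla U Hn}) as the Gauss formula (\ref{gauss formula}) applied to the left invariant vector field $U_o$ together with (\ref{expr Gamma Hn}). Theorem \ref{thm main result} then yields a spinor $\varphi$ satisfying (\ref{killing equation}).

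For the substantive direction $(1) \Rightarrow (2)$, I would mimic the strategy of Theorem \ref{theorem section Rn} and define $f$ from the spinor itself by
\[ f(Z) := \langle\langle Z \cdot \varphi, \varphi \rangle\rangle, \qquad Z \in \Gamma(TM \oplus E). \]
The Clifford computations of Proposition \ref{lem xi closed}(\ref{lem xi closed 1}) and Proposition \ref{lem F isometry} show that $f$ takes values in $\mathcal{G}$ and preserves fibre metrics, hence is a bundle isometry. Differentiating $f(U)$ and combining (\ref{nabla U Hn}) with the algebraic identity (\ref{d xi computation})---which depends only on the Killing equation and the Clifford Lemmas \ref{lem1 ap1} and \ref{lem3 ap1}, not on (\ref{nabla field invariant})---one checks that $\partial_X f(U) = 0$, so $f(U)$ is a constant vector of norm $|U_o|$ in $\mathcal{G}$. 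Choosing $a \in Spin(\mathcal{G})$ with $Ad(a^{-1})(f(U)) = U_o$ and replacing $\varphi$ by $\varphi \cdot a$---a substitution which preserves the Killing equation because right multiplication is $\nabla$-parallel---we arrange $f(U) = U_o$, which gives (\ref{def Gamma TM+E}).

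To establish (\ref{nabla field invariant}) for a left invariant section $Z$ (one with $f(Z)$ constant), differentiate $0 = \partial_X f(Z)$ and again apply (\ref{d xi computation}) to obtain
\[ f(\nabla_X Z) = f\bigl( \Gamma(X)(Z) - B(X, Z^T) + B^*(X, Z^N) \bigr), \]
from which (\ref{nabla field invariant}) follows by injectivity of $f$. Theorem \ref{thm main result} then produces the immersion. The step I expect to require the most care is verifying the actual constancy (not merely constant norm) of $f(U)$ on $M$, since it is this property that allows a single element of $Spin(\mathcal{G})$ to align $f(U)$ with $U_o$ globally; this constancy is ultimately a consequence of the precise form of (\ref{nabla U Hn}), and explains why the hypotheses $|U| = |l|$ and (\ref{nabla U Hn}) are the right ones to impose.
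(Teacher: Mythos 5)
Your proposal is correct and follows essentially the same route as the paper: define $f(Z)=\langle\langle Z\cdot\varphi,\varphi\rangle\rangle$, use (\ref{nabla U Hn}) together with the computation (\ref{d xi computation}) to show $f(U)$ is constant, rotate by $a\in Spin(\mathcal{G})$ so that $f(U)=U_o$ (which yields (\ref{def Gamma TM+E})), verify (\ref{nabla field invariant}) by differentiating $f(Z)$ for left invariant $Z$, and invoke Theorem \ref{thm main result}. The only addition is your explicit treatment of $(2)\Rightarrow(1)$ via the Gauss formula, which the paper leaves implicit.
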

\begin{proof}
We assume that $\varphi\in\Gamma(U\Sigma)$ is a solution of (\ref{killing equation}) where $\Gamma$ is defined by (\ref{def Gamma Hn}), and define $f:TM\oplus E\rightarrow M\times T_e\HH^n$ by 
$$f(Z)=\langle\langle Z\cdot\varphi,\varphi\rangle\rangle$$
for all $Z\in TM\oplus E.$ Let us first observe that if $Z$ is a vector field solution of (\ref{nabla field invariant}), then $f(Z)$ is constant: we have, for all $X\in TM,$
$$\partial_Xf(Z)=\langle\langle\nabla_XZ\cdot\varphi,\varphi\rangle\rangle+\langle\langle Z\cdot\nabla_X\varphi,\varphi\rangle\rangle+\langle\langle Z\cdot\varphi,\nabla_X\varphi\rangle\rangle;$$
this is 0, by (\ref{nabla field invariant}), (\ref{killing equation}) and the computation (\ref{d xi computation}). Since $U$ is a solution of (\ref{nabla field invariant}) (see Remark \ref{rem U solution eqn Z}), we deduce that $f(U)\in T_e\HH^n$ is a constant, and, since $|f(U)|=|U|=|U_o|,$ replacing $\varphi$ by $\varphi\cdot a$ for some $a\in Spin(T_e\HH^n)$ if necessary, we may suppose that $f(U)=U_o.$ Since $\Gamma$ is defined on $T_e\HH^n$ by (\ref{expr Gamma Hn}) and on $TM\oplus E$ by (\ref{def Gamma Hn}), and since $f$ preserves the metrics, it is straightforward to see that $f(\Gamma(X)(Y))=\Gamma(f(X))(f(Y))$ for all $X,Y\in TM\oplus E.$ Finally, (\ref{nabla field invariant}) holds for all $Z\in\Gamma(TM\oplus E)$ such that $f(Z)$ is constant: this is the same argument as in the proof of Theorem \ref{theorem section Rn} in Section \ref{section Rn}, just adding the term $\Gamma.$ The result then follows from Theorem \ref{thm main result}.
\end{proof}

\subsection{Hypersurfaces in a metric Lie group} 
We assume that $G$ is a simply connected $n$-dimensional metric Lie group, $M$ is a $p$-dimensional Riemannian  manifold,  $n=p+1,$ and $E$ is the trivial line bundle on $M$, oriented by a unit section $\nu\in \Gamma(E).$ We moreover suppose that $M$ is simply connected and that $h:TM\times TM\rightarrow \R$ is a given symmetric bilinear form, and that the hypotheses (1) and (2) of Section \ref{section main result} with $B=h\nu$ hold. According to Theorem \ref{thm main result}, an isometric immersion of $M$ into $G$ with second fundamental form $h$ is equivalent to a section $\varphi$ of $\Gamma(U\Sigma)$ solution of the Killing equation (\ref{killing equation}). Note that $Q_E\simeq M$ and the double covering $\tilde{Q}_E\rightarrow Q_{E}$ is trivial, since $M$ is assumed to be simply connected. Fixing a section $\tilde{s}_E$ of $\tilde{Q}_E$ we get an injective map
\begin{eqnarray*}
\tilde{Q}_M&\rightarrow &\tilde{Q}_M\times_M\tilde{Q}_E=:\tilde{Q}\\
\tilde{s}_M&\mapsto&(\tilde{s}_M,\tilde{s}_E).
\end{eqnarray*}
Using
$$Cl_p\simeq Cl^0_{p+1}\subset Cl_{p+1}$$
(induced by the Clifford map $\R^p\rightarrow Cl_{p+1},$ $X\mapsto X\cdot e_{p+1})$,  we deduce a bundle isomorphism
\begin{eqnarray}
\tilde{Q}_M\times_{\rho} Cl_p&\rightarrow& \tilde{Q}\times_{\rho} Cl^0_{p+1}\hspace{.3cm} \subset\Sigma\label{identif spineurs}\\
\psi&\mapsto&\psi^*.\nonumber
\end{eqnarray}
It satisfies the following properties: for all $X\in TM$ and $\psi\in \tilde{Q}_M\times_{\rho} Cl_p,$
\begin{equation}\label{properties spinors M G}(X\cdot\psi)^*=X\cdot\nu\cdot\psi^*
\hspace{.5cm}\mbox{and}\hspace{.5cm}\nabla_X(\psi^*)=(\nabla_X\psi)^*.
\end{equation}
To write down the Killing equation (\ref{killing equation}) in the bundle $\tilde{Q}_M\times_{\rho} Cl_p,$ we need to decompose the Clifford action of $\Gamma(X)$ into its tangent and its normal parts:
\begin{lem} \label{lemma Gamma Ti} Recall the notation introduced in Remark \ref{rmk cond frame}. Then, for all $X\in TM,$
\begin{equation}
\Gamma(X)=\sum_{i=1}^n\langle X,T_i\rangle\sum_{1\leq j<k\leq n}\Gamma_{ij}^k\left(\frac{1}{2}(T_j\cdot T_k-T_k\cdot T_j)+(f_kT_j-f_jT_k)\cdot\nu\right).
\end{equation}
\end{lem}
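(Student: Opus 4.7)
The argument is a direct computation in two stages: first express $\Gamma(X)$ in the orthonormal frame $\{\underline{e}_i\}$ of $TM\oplus E,$ then convert the resulting bivectors into the $\{T_i,\nu\}$ language using the relation $\underline{e}_i=T_i+f_i\nu.$

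First, since $\{\underline{e}_1,\dots,\underline{e}_n\}$ is orthonormal in $TM\oplus E$ and $X\in TM$ is orthogonal to $\nu,$ one has
$$X=\sum_{i=1}^n\langle X,\underline{e}_i\rangle\,\underline{e}_i=\sum_{i=1}^n\langle X,T_i\rangle\,\underline{e}_i.$$
By $\R$-linearity of $\Gamma,$ this reduces the proof to computing $\Gamma(\underline{e}_i)\in\Lambda^2(TM\oplus E).$ Applying the bundle isomorphism $f$ to the definition of the structure constants $\Gamma_{ij}^k$ and using (\ref{def Gamma TM+E}), we get $\Gamma(\underline{e}_i)(\underline{e}_j)=\sum_k\Gamma_{ij}^k\,\underline{e}_k;$ since the $\underline{e}_k$ form an orthonormal frame and $\Gamma(\underline{e}_i)$ is skew-symmetric, this is equivalent to the bivector identity
$$\Gamma(\underline{e}_i)=\sum_{1\leq j<k\leq n}\Gamma_{ij}^k\;\underline{e}_j\wedge\underline{e}_k\quad\in\ \Lambda^2(TM\oplus E).$$

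Second, I would identify $\Lambda^2(TM\oplus E)$ with its image in $Cl(TM\oplus E)$ via $a\wedge b\mapsto\frac{1}{2}(a\cdot b-b\cdot a)$ and expand $\underline{e}_j\cdot\underline{e}_k$ using $\underline{e}_j=T_j+f_j\nu.$ The key Clifford-algebra facts are $\nu\cdot\nu=-1$ and $T_j\cdot\nu=-\nu\cdot T_j$ (since $T_j\perp\nu$ in $TM\oplus E$). A short calculation then gives
\begin{align*}
\underline{e}_j\cdot\underline{e}_k-\underline{e}_k\cdot\underline{e}_j
&=(T_j\cdot T_k-T_k\cdot T_j)+2(f_k\,T_j-f_j\,T_k)\cdot\nu,
\end{align*}
the $f_jf_k\,\nu^2$ terms cancelling by symmetry in $(j,k).$ Dividing by $2,$
$$\underline{e}_j\wedge\underline{e}_k\;=\;\tfrac{1}{2}(T_j\cdot T_k-T_k\cdot T_j)+(f_k\,T_j-f_j\,T_k)\cdot\nu.$$

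Combining the two stages yields exactly the claimed formula. There is no real obstacle here beyond sign-bookkeeping in the Clifford algebra; the only point requiring a moment of care is that the identification of a $2$-form with a bivector in $Cl$ requires the factor $\tfrac{1}{2}$ when the vectors are orthonormal, which is why the tangential piece $\tfrac{1}{2}(T_j\cdot T_k-T_k\cdot T_j)$ carries a $\tfrac{1}{2}$ while the mixed piece $(f_kT_j-f_jT_k)\cdot\nu$ does not (the two terms $f_kT_j\nu$ and $-f_jT_k\nu$ both appear with the same sign in $\underline{e}_j\cdot\underline{e}_k$ and with opposite sign in $\underline{e}_k\cdot\underline{e}_j,$ so they add rather than average).
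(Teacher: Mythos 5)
Your proof is correct and follows essentially the same route as the paper's: expand $X$ in the frame $\underline{e}_i=T_i+f_i\nu,$ represent the skew-symmetric operator $\Gamma(\underline{e}_i)$ by the bivector $\frac12\sum_j\underline{e}_j\cdot\Gamma(\underline{e}_i)(\underline{e}_j)=\sum_{j<k}\Gamma_{ij}^k\,\underline{e}_j\wedge\underline{e}_k$ (using $\Gamma_{ij}^k=-\Gamma_{ik}^j$), and expand the Clifford products. The sign bookkeeping, including the cancellation of the $f_jf_k$ scalar terms and the absence of a $\frac12$ on the mixed $T_j\cdot\nu$ piece, matches the paper's computation.
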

\begin{proof}
We have
$$X=\sum_{i=1}^n\langle X,\underline{e}_i\rangle \underline{e}_i=\sum_{i=1}^n\langle X,T_i\rangle \underline{e}_i,$$
\begin{eqnarray*}
\Gamma(X)(\underline{e}_j)&=&\sum_{i=1}^n\langle X,T_i\rangle \Gamma(\underline{e}_i)(\underline{e}_j)\\
&=&\sum_{i=1}^n\langle X,T_i\rangle\sum_{k=1}^n\Gamma_{ij}^k\underline{e}_k\\
&=&\sum_{1\leq i,k\leq n}\Gamma_{ij}^k\langle X,T_i\rangle(T_k+f_k\nu),
\end{eqnarray*}
and thus
\begin{eqnarray*}
\Gamma(X)&=&\frac{1}{2}\sum_{j=1}^n\underline{e}_j\cdot\Gamma(X)(\underline{e}_j)\\
&=&\frac{1}{2}\sum_{j=1}^n(T_j+f_j\nu)\cdot\sum_{1\leq i,k\leq n}\Gamma_{ij}^k\langle X,T_i\rangle(T_k+f_k\nu)\\
&=&\frac{1}{2}\sum_{1\leq i,j,k\leq n}\Gamma_{ij}^k\langle X,T_i\rangle(T_j+f_j\nu)\cdot (T_k+f_k\nu).
\end{eqnarray*}
Now
$$(T_j+f_j\nu)\cdot (T_k+f_k\nu)=T_j\cdot T_k+f_kT_j\cdot\nu-f_jT_k\cdot\nu-f_jf_k,$$
and the result follows since $\Gamma_{ij}^k=-\Gamma_{ik}^j.$
\end{proof}
The section $\varphi\in \Gamma(U\Sigma)$ solution of (\ref{killing equation}) thus identifies to a section $\psi$ of $\tilde{Q}_M\times_{\rho} Cl_p$ solution of
\begin{eqnarray*}
\nabla_X\psi&=&-\frac{1}{2}\sum_{j=1}^ph(X,e_j)e_j\cdot\psi+\frac{1}{2}\tilde{\Gamma}(X)\cdot\psi\\
&=&-\frac{1}{2}S(X)\cdot\psi+\frac{1}{2}\tilde{\Gamma}(X)\cdot\psi
\end{eqnarray*}
for all $X\in TM,$ where 
\begin{equation}\label{def Gamma tilde}
\tilde{\Gamma}(X)=\sum_{i=1}^n\langle X,T_i\rangle\sum_{1\leq j<k\leq n}\Gamma_{ij}^k\left(\frac{1}{2}\left(T_j\cdot T_k-T_k\cdot T_j\right)+(f_kT_j-f_jT_k)\right)
\end{equation}
and $S:TM\rightarrow TM$ is the symmetric operator associated to $h.$ We deduce the following result:
\begin{thm}\label{thm hypersurfaces}
Let $S:TM\rightarrow TM$ be a symmetric operator. The following two statements are equivalent:
\begin{enumerate}
\item there exists an isometric immersion of $M$ into $G$ with shape operator $S;$
\item there exists a normalized spinor field $\psi\in \Gamma(\tilde Q_M\times_{\rho} Cl_p)$ solution of \begin{equation}\label{equation psi}
\nabla_X\psi=-\frac{1}{2}S(X)\cdot\psi+\frac{1}{2}\tilde{\Gamma}(X)\cdot\psi
\end{equation}
for all $X\in TM,$ where $\tilde{\Gamma}$ is defined in (\ref{def Gamma tilde}). 
\end{enumerate}
Here, a spinor field $\psi\in \Gamma(\tilde Q_M\times_{\rho} Cl_p)$ is said to be normalized if it is represented in some frame $\tilde{s}\in\tilde{Q}_M$ by an element $[\psi]\in Cl_p\simeq Cl_{p+1}^0$ belonging to $Spin(p+1).$
\end{thm}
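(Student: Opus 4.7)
The plan is to deduce the theorem directly from Theorem \ref{thm main result} by rewriting the Killing equation (\ref{killing equation}) in the smaller bundle $\tilde Q_M\times_\rho Cl_p$ via the bundle isomorphism $\psi\mapsto\psi^*$ of (\ref{identif spineurs}). Since $B=h\nu$ and $\mathcal{G}$ has dimension $n=p+1$, a section $\varphi\in\Gamma(U\Sigma)$ corresponds under this identification to a section $\psi$ whose pointwise representative lies in $Spin(p+1)\subset Cl^0_{p+1}\simeq Cl_p$, which is exactly the normalization condition in the statement. It therefore suffices to check that, under $\varphi=\psi^*$, equation (\ref{killing equation}) is equivalent to (\ref{equation psi}).

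The first step is to decompose the two Clifford terms on the right-hand side of (\ref{killing equation}). The second-fundamental-form term reduces to $-\tfrac12\sum_j h(X,e_j)\,e_j\cdot\nu$, a pure tangent-normal bivector. Lemma \ref{lemma Gamma Ti} splits $\Gamma(X)$ into a purely tangent bivector built from $\tfrac12(T_j\cdot T_k - T_k\cdot T_j)$ and a tangent-normal bivector built from $(f_kT_j-f_jT_k)\cdot\nu$.

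The second step is to translate these three types of terms using the two identities (\ref{properties spinors M G}). The connection identity $\nabla_X(\psi^*)=(\nabla_X\psi)^*$ handles the left-hand side. The Clifford identity $(Y\cdot\psi)^*=Y\cdot\nu\cdot\psi^*$ for $Y\in TM$ shows that acting on $\psi^*$ by a tangent-normal bivector of the form $Y\cdot\nu$ coincides with acting on $\psi$ by the vector $Y$ inside $Cl_p$. Hence the sum $-\tfrac12\sum_j h(X,e_j)\,e_j\cdot\nu$ becomes $-\tfrac12 S(X)$ acting on $\psi$, and the tangent-normal part of $\Gamma(X)$ becomes the vector $\sum_i\langle X,T_i\rangle\sum_{j<k}\Gamma_{ij}^k(f_kT_j-f_jT_k)\in Cl_p$. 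The purely tangent bivector $\tfrac12(T_j\cdot T_k - T_k\cdot T_j)$ passes through the embedding $e_i\mapsto e_i\cdot\nu$ unchanged, since a direct computation gives $(e_i\cdot\nu)(e_j\cdot\nu)=e_i\cdot e_j$. Assembling the three pieces gives precisely (\ref{equation psi}) with $\tilde\Gamma$ as defined in (\ref{def Gamma tilde}); the converse implication is the same dictionary read in the opposite direction.

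The only delicate point is the sign bookkeeping that arises when one pulls $\nu$ past tangent vectors using $\nu^2=-1$ in the embedding $Cl_p\hookrightarrow Cl^0_{p+1}$. This is what guarantees that the tangent-normal part of $\Gamma(X)$ translates into a pure vector in $Cl_p$ without a sign flip, matching (\ref{def Gamma tilde}) on the nose. Once this is verified on basis bivectors, the translation of (\ref{killing equation}) into (\ref{equation psi}) is routine.
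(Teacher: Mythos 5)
Your proposal is correct and follows essentially the same route as the paper: the paper likewise obtains Theorem \ref{thm hypersurfaces} by combining Theorem \ref{thm main result} with the identification (\ref{identif spineurs}), the properties (\ref{properties spinors M G}), and the decomposition of $\Gamma(X)$ in Lemma \ref{lemma Gamma Ti}, observing that the tangent--normal bivectors $Y\cdot\nu$ act on $\psi^*$ as the vectors $Y$ act on $\psi$ while the purely tangent bivectors pass through unchanged (since $(e_i\cdot\nu)\cdot(e_j\cdot\nu)=e_i\cdot e_j$). The sign check you flag is exactly the point the paper's computation relies on, and your handling of it is right.
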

We will see below explicit representation formulas in the cases of the dimensions 3 and 4.

\subsection{Surfaces in a 3-dimensional metric Lie group} 
Since $Cl_2\simeq\Sigma_2$ we have
$$\tilde{Q}_M\times_{\rho} Cl_2\simeq\Sigma M,$$
and $\varphi$ is equivalent to a spinor field $\psi\in\Gamma(\Sigma M)$ solution of (\ref{equation psi}) and such that $|\psi|=1.$ Moreover, the explicit representation formula $F=\int\xi$ may be written in terms of $\psi:$ it may be proved by a computation that
\begin{equation}\label{explicit representation dim 3}
\langle\langle X\cdot\varphi,\varphi\rangle\rangle=i2\mathcal{R}e\langle X\cdot\psi^+,\psi^-\rangle+j\left(\langle X\cdot\psi^+,\alpha(\psi^+)\rangle-\langle X\cdot\psi^-,\alpha(\psi^-)\rangle\right)
\end{equation}
where the brackets $\langle.,.\rangle$ stand here for the natural hermitian product on $\Sigma_2$ and $\alpha:\Sigma_2\rightarrow\Sigma_2$ is the natural quaternionic structure. If $G=\R^3,$ this is the explicit representation formula given in \cite{Fr} (see also \cite{BLR}).
\\

We also note that the expression (\ref{def Gamma tilde}) of $\tilde{\Gamma}$ simplifies if the Lie group is 3-dimensional:
\begin{lem}\label{lem Gamma dim 3}
If $j,k,$ $j\neq k,$ belong to $\{1,2,3\},$ let us denote by $l_{jk}\in \{1,2,3\}$ the number such that $(j,k,l_{jk})$ is a permutation of $\{1,2,3\}$ and by $\epsilon_{jk}=\pm 1$ the sign of this permutation. Then, for all $X\in TM,$
$$\tilde{\Gamma}(X)=\sum_{i=1}^3\langle X,T_i\rangle\sum_{1\leq j<k\leq 3}\Gamma_{ij}^k\epsilon_{jk}(f_{l_{jk}}-T_{l_{jk}})\cdot\omega$$
where $\omega\in Cl(TM)$ is the area element of $M.$
\end{lem}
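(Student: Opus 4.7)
The plan is to simplify the expression (\ref{def Gamma tilde}) by decomposing each summand into a bivector part $\frac{1}{2}(T_j\cdot T_k-T_k\cdot T_j)$ and a vector part $f_kT_j-f_jT_k$, and recognizing both parts as the tangent/normal components of a Hodge-dual relation available only because $M$ is $2$-dimensional. Using the Clifford relation $T_j\cdot T_k+T_k\cdot T_j=-2\langle T_j,T_k\rangle$, the bivector part equals $T_j\wedge T_k\in\Lambda^2 TM\subset Cl(TM)$, so everything lives in $Cl(TM)$.

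Fix a positively oriented orthonormal frame $(e_1,e_2)$ of $TM$, so $\omega=e_1\cdot e_2$, and write $T_i=\alpha_i e_1+\beta_i e_2$ with $\alpha_i=\langle T_i,e_1\rangle$, $\beta_i=\langle T_i,e_2\rangle$. A direct Clifford computation (with the convention $e_i^2=-1$ used in the paper) yields
$$T_j\wedge T_k=(\alpha_j\beta_k-\alpha_k\beta_j)\,\omega,\qquad T_i\cdot\omega=\beta_i e_1-\alpha_i e_2.$$

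The next step is the key observation. Since $f$ preserves the metric and sends $\underline{e}_i$ to an orthonormal basis of $\mathcal{G}$, the $3\times3$ matrix with rows $\vec{u}_i=(\alpha_i,\beta_i,f_i)$ is orthogonal; assuming the orientation of the basis $(e_1^o,e_2^o,e_3^o)$ of $\mathcal{G}$ is chosen compatibly with that of $(e_1,e_2,\nu)$, it lies in $SO(3)$. Hence its rows satisfy the cross-product identity $\vec{u}_j\times\vec{u}_k=\epsilon_{jk}\,\vec{u}_{l_{jk}}$ in $\mathbb{R}^3$. Reading off its three components gives
\begin{align*}
\alpha_j\beta_k-\alpha_k\beta_j &= \epsilon_{jk}\,f_{l_{jk}},\\
\beta_j f_k-f_j\beta_k &= \epsilon_{jk}\,\alpha_{l_{jk}},\\
f_j\alpha_k-\alpha_j f_k &= \epsilon_{jk}\,\beta_{l_{jk}}.
\end{align*}
The first identity turns Step 2's formula into $T_j\wedge T_k=\epsilon_{jk}\,f_{l_{jk}}\,\omega$, while the remaining two combine with $T_i\cdot\omega=\beta_i e_1-\alpha_i e_2$ to give $f_kT_j-f_jT_k=-\epsilon_{jk}\,T_{l_{jk}}\cdot\omega$.

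Adding these two contributions yields
$$\tfrac{1}{2}(T_j\cdot T_k-T_k\cdot T_j)+(f_kT_j-f_jT_k)=\epsilon_{jk}(f_{l_{jk}}-T_{l_{jk}})\cdot\omega,$$
and substituting into (\ref{def Gamma tilde}) gives the lemma. The main obstacle is purely one of bookkeeping: ensuring the orientations of $\mathcal{G}$ and $TM\oplus E$ are chosen so that the coordinate matrix really lies in $SO(3)$ (otherwise one picks up a global sign), and matching the signs produced by the Clifford relations $e_i^2=-1$ against those of the $\mathbb{R}^3$-cross product. A slicker, coordinate-free variant replaces Step 3 by the Clifford identity $\underline{e}_j\cdot\underline{e}_k=-\epsilon_{jk}\,\omega_3\cdot\underline{e}_{l_{jk}}$ in $Cl(TM\oplus E)\simeq Cl_3$ (where $\omega_3=e_1\cdot e_2\cdot\nu$), and separates this equality into its parts with and without $\nu$; either approach leads to the same conclusion.
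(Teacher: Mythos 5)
Your proof is correct, and it reaches the key identity
$$\tfrac{1}{2}(T_j\cdot T_k-T_k\cdot T_j)+(f_kT_j-f_jT_k)=\epsilon_{jk}(f_{l_{jk}}-T_{l_{jk}})\cdot\omega$$
by a route that differs in execution from the paper's. You work in coordinates: expanding $T_i=\alpha_ie_1+\beta_ie_2$, noting that (\ref{cond e_i orth}) makes the matrix with rows $(\alpha_i,\beta_i,f_i)$ orthogonal, and invoking the cross-product identity for the rows of an $SO(3)$ matrix. The paper instead stays in $Cl(TM\oplus E)\simeq Cl_3$ and uses the volume-element identity $\underline{e}_j\cdot\underline{e}_k=-\epsilon_{jk}\,\omega\cdot\nu\cdot\underline{e}_{l_{jk}}$, then separates the parts with and without $\nu$ — exactly the ``slicker, coordinate-free variant'' you mention at the end, so you have in effect identified both arguments. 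The two are the same fact in different clothing: both rest on $(\underline{e}_1,\underline{e}_2,\underline{e}_3)$ being a \emph{positively oriented} orthonormal frame of $TM\oplus E$, and the orientation caveat you flag is not a gap specific to your argument — the paper makes the identical implicit assumption when it writes $\underline{e}_j\cdot\underline{e}_k\cdot\underline{e}_{l_{jk}}=\epsilon_{jk}\,\omega\cdot\nu$. Your coordinate computations check out (with $e_i^2=-1$ one indeed gets $\tfrac12(T_j\cdot T_k-T_k\cdot T_j)=(\alpha_j\beta_k-\alpha_k\beta_j)\omega$ and $T_i\cdot\omega=\beta_ie_1-\alpha_ie_2$, and the three components of $\vec u_j\times\vec u_k=\epsilon_{jk}\vec u_{l_{jk}}$ give precisely the scalars needed). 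What the coordinate version buys is transparency about where the orthogonality condition (\ref{cond e_i orth}) enters; what the Clifford version buys is brevity and immediate generalization of the bookkeeping to the identification $(\tilde\Gamma(X)\cdot\psi)^*=\Gamma(X)\cdot\psi^*$ used elsewhere in the section.
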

\begin{proof}
Keeping the notation introduced above, we note that
$$\underline{e}_j\cdot \underline{e}_k\cdot \underline{e}_{l_{jk}}=\epsilon_{jk}\ \omega\cdot\nu,$$
which yields
$$\underline{e}_j\cdot \underline{e}_k=-\epsilon_{jk}\ \omega\cdot\nu\cdot \underline{e}_{l_{jk}}.$$
Thus
\begin{eqnarray*}
T_j\cdot T_k+(f_kT_j-f_jT_k)\cdot\nu-f_jf_k&=&-\epsilon_{jk}\ \omega\cdot\nu\cdot(T_{l_{jk}}+f_{l_{jk}}\nu)\\
&=&\epsilon_{jk}(f_{l_{jk}}-T_{l_{jk}}\cdot\nu)\cdot\omega
\end{eqnarray*}
since $T_{l_{jk}}\cdot\nu=-\nu\cdot T_{l_{jk}},$ $T_{l_{jk}}\cdot\omega=-\omega\cdot T_{l_{jk}}$ and $\omega\cdot\nu=\nu\cdot\omega.$ Switching the indices $j$ and $k$ we also get
\begin{eqnarray*}
T_k\cdot T_j+(f_jT_k-f_kT_j)\cdot\nu-f_kf_j&=&\epsilon_{kj}(f_{l_{kj}}-T_{l_{kj}}\cdot\nu)\cdot\omega\\
&=&-\epsilon_{jk}(f_{l_{jk}}-T_{l_{jk}}\cdot\nu)\cdot\omega
\end{eqnarray*}
since  $\epsilon_{kj}=-\epsilon_{jk}$ and $l_{kj}=l_{jk}.$ We deduce that
$$\frac{1}{2}\left(T_j\cdot T_k-T_k\cdot T_j\right)+(f_kT_j-f_jT_k)\cdot\nu=\epsilon_{jk}(f_{l_{jk}}-T_{l_{jk}}\cdot\nu)\cdot\omega.$$
The result is then a consequence of Lemma \ref{lemma Gamma Ti} together with the relation 
$$\left(\tilde{\Gamma}(X)\cdot \psi\right)^* =\Gamma(X)\cdot \psi^*$$ 
and the first property in (\ref{properties spinors M G}). 
\end{proof}
\subsubsection{The metric Lie group $S^3$}
A spinor representation of a surface immersed in $S^3$ was already given in \cite{Mo} (see also \cite{BLR,BLR2}). We give here a spinor representation relying on the group structure; it appears that it coincides with the result in \cite{Mo}.
\\

We regard the sphere $S^3$ as the set of the unit quaternions, with its natural group structure. The Lie algebra of $S^3$ identifies to $\R^3,$ with the bracket $[X,Y]=2X\times Y$ for all $X,Y\in \R^3$ ($\times$ is the usual cross product). By the Koszul formula (\ref{koszul formula}), for all $X,Y\in\R^3,$
$$\Gamma(X)(Y)=X\times Y.$$
As a bivector, for all $X=X_1e_1^o+X_2e_2^o+X_3e_3^o\in\R^3,$
\begin{eqnarray*}
\Gamma(X)&=&\frac{1}{2}\left(e_1^o\cdot \Gamma(X)(e_1^o)+e_2^o\cdot \Gamma(X)(e_2^o)+e_3^o\cdot \Gamma(X)(e_3^o)\right)\\
&=&X_1e_2^o\cdot e_3^o+X_2e_3^o\cdot e_1^o+X_3e_1^o\cdot e_2^o\\
&=&-X\cdot(e_1^o\cdot e_2^o\cdot e_3^o).
\end{eqnarray*}
Thus, if $\varphi\in \tilde{Q}\times_{\rho} Cl_3^0$ represents an immersion of an oriented surface $M$ in $S^3$ and if $\psi\in\Gamma(\Sigma M)$ is such that $\varphi=\psi^*,$ then, for all $X\in TM,$
\begin{eqnarray*}
\Gamma(X)\cdot\varphi&=&-X\cdot(e_1^o\cdot e_2^o\cdot e_3^o)\cdot\varphi\\
&=&-X\cdot\omega\cdot\nu\cdot\varphi\\
&=&(X\cdot\nu)\cdot\omega\cdot\varphi\\
&=&\left(X\cdot\omega\cdot\psi\right)^*
\end{eqnarray*}
where $\omega$ is the area form of $M$, and $\nu$ is the vector normal to $M$ in $S^3.$
Since $\varphi\in\Gamma(U\Sigma)$ is a solution of (\ref{killing equation}), $\psi\in\Gamma(\Sigma M)$ is a solution of
$$\nabla_X\psi=-\frac{1}{2}S(X)\cdot\psi+\frac{1}{2}X\cdot\omega\cdot\psi$$
and satisfies $|\psi|=1.$ Taking the trace, we get
\begin{eqnarray*}
D\psi&=&e_1\cdot\nabla_{e_1}\psi+e_2\cdot\nabla_{e_2}\psi\\
&=&H\psi-\omega\cdot\psi
\end{eqnarray*}
where $(e_1,e_2)$ is a positively oriented and orthonormal basis of $TM.$ Now, setting $\overline{\psi}=\psi^+-\psi^-$ and since $\omega\cdot\psi=-i\overline{\psi}$ (recall that $i\omega$ acts as the identity on $\Sigma^+M$ and as -identity on $\Sigma^-M$),  we get 
$$D\psi=H\psi-i\overline{\psi},$$
which is also the spinor characterization given by Morel in \cite{Mo}.

\subsubsection{Surfaces in the 3-dimensional metric Lie groups $E(\kappa,\tau),$ $\tau\neq 0$}
We recover here a spinor characterization of immersions in the 3-dimensional homogeneous spaces $E(\kappa,\tau);$ this result was obtained by the second author in \cite{Ro}, using a characterization of immersions in these spaces by Daniel \cite{Da}. We give here an independent proof, and rather obtain the result of Daniel as a corollary.
\\

The metric Lie group $E(\kappa,\tau),$ $\tau\neq 0,$ is defined as follows: its Lie algebra is $\mathcal{G}=\R^3,$ with the bracket defined on the vectors $e_1^o,e_2^o,e_3^o$ of the canonical basis by
$$[e_1^o,e_2^o]=2\tau e_3^o,\hspace{1cm} [e_2^o,e_3^o]=\sigma e_1^o,\hspace{1cm} [e_3^o,e_1^o]=\sigma e_2^o$$
where $\sigma=\frac{\kappa}{2\tau}.$ The metric on $\mathcal{G}$ is the canonical metric, ie the metric such that the basis $(e_1^o,e_2^o,e_3^o)$ is orthonormal. The Levi-Civita connection is then given by
\begin{equation}\label{Gamma E k t}
\Gamma(X)(Y)=\left\{\tau(X-\langle X,e_3^o\rangle e_3^o)+(\sigma-\tau)\langle X,e_3^o\rangle e_3^o\right\}\times Y
\end{equation}
for $X,Y\in\mathcal{G};$ see e.g. \cite{Da}.
\\

Let $S:TM\rightarrow TM$ be a symmetric operator. We assume that a vector field $T\in\Gamma(TM)$ and a function $f\in C^{\infty}(M,\R)$ are given such that
\begin{equation}\label{eqn T f E k t}
|T|^2+f^2=1,
\end{equation}
\begin{equation}\label{eqn T E k t}
\nabla_X T=f(S(X)-\tau JX)
\end{equation}
and
\begin{equation}\label{eqn f E k t}
df(X)=-\langle S(X)-\tau JX,T\rangle
\end{equation}
for all $X\in TM,$ where $J:TM\rightarrow TM$ denotes the rotation of angle $+\pi/2$ in the tangent planes.

\begin{thm}\label{thm E k t}\cite{Ro}
If $M$ is simply connected, the following two statements are equivalent:
\begin{enumerate}
\item There exists $\psi\in\Gamma(\Sigma M)$ such that $|\psi|=1$ and
\begin{equation}\label{eqn spinor E k t}
\nabla_X\psi=-\frac{1}{2}S(X)\cdot\psi+\frac{1}{2}\left\{(2\tau-\sigma)\langle X,T\rangle\left(T-f\right)-\tau X\right\}\cdot\omega\cdot\psi
\end{equation}
for all $X\in TM.$
\item There exists an isometric immersion of $M$ into $E(\kappa,\tau),$ with shape operator $S.$
\end{enumerate}
\end{thm}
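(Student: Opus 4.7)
The plan is to reduce this theorem to Theorem \ref{thm hypersurfaces} by computing explicitly the quantity $\tilde{\Gamma}$ defined in (\ref{def Gamma tilde}) in the case $G = E(\kappa, \tau)$. The crucial observation is that $\tilde{\Gamma}(X)$ depends only on $T = T_3$ and $f = f_3$, not on the auxiliary data $T_1, T_2, f_1, f_2$; this reflects the $SO(2)$-symmetry of $E(\kappa, \tau)$ around the $e_3^o$-axis.

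From (\ref{Gamma E k t}), the nonzero Christoffel symbols (up to the antisymmetry $\Gamma_{ij}^k = -\Gamma_{ik}^j$) are
$$\Gamma_{12}^3 = \tau, \quad \Gamma_{13}^2 = -\tau, \quad \Gamma_{23}^1 = \tau, \quad \Gamma_{31}^2 = \sigma - \tau.$$
Substituting into Lemma \ref{lem Gamma dim 3}, and using the identities
$$\sum_{i=1}^3 \langle X, T_i\rangle T_i = X, \qquad \sum_{i=1}^3 \langle X, T_i\rangle f_i = 0$$
(both consequences of the orthonormality of $\{\underline{e}_i = T_i + f_i\nu\}$ together with $X \in TM$), a short calculation yields
$$\tilde{\Gamma}(X) = \bigl((2\tau - \sigma)\langle X, T\rangle(T - f) - \tau X\bigr) \cdot \omega,$$
which is precisely the twist factor appearing in (\ref{eqn spinor E k t}).

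For the direction $(2) \Rightarrow (1)$, the immersion produces the full frame $\{\underline{e}_i\}$ by restriction of the left-invariant frame, so the hypotheses of Theorem \ref{thm hypersurfaces} hold automatically. The spinor $\psi$ delivered by that theorem satisfies (\ref{equation psi}), which by the computation above is (\ref{eqn spinor E k t}). The conditions (\ref{eqn T E k t}) and (\ref{eqn f E k t}) on $T, f$ are the specialization of (\ref{eqn 1 trad}), (\ref{eqn 2 trad}) to $i = 3$; this requires the identity $\langle X, T_2\rangle T_1 - \langle X, T_1\rangle T_2 = -fJX$, which I would derive by expanding $T_i = a_i e_1 + b_i e_2$ in a positive orthonormal frame of $TM$ and invoking the Cauchy-Binet formula together with the orientation-preservation of the bundle isomorphism $f$.

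The main obstacle is the converse $(1) \Rightarrow (2)$: one must manufacture compatible $(T_1, T_2, f_1, f_2)$ from the given data $(T, f, \psi)$ alone in order to verify hypothesis (2) of Section \ref{section main result}. I would fix $x_0 \in M$, choose any orthonormal completion of $(T + f\nu)(x_0)$ in $(TM \oplus E)|_{x_0}$, and extend it to $M$ by imposing (\ref{eqn 1 trad})--(\ref{eqn 2 trad}) for $i = 1, 2$. Preservation of orthonormality along this propagation is automatic since the right-hand side of (\ref{nabla field invariant}) is skew with respect to the metric, and global solvability on the simply connected $M$ reduces to the vanishing of a holonomy, itself equivalent to the Gauss-Codazzi-Ricci equations, which hold by Section \ref{section fundamental theorem} thanks to the existence of $\psi$. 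With the extension in place, Theorem \ref{thm hypersurfaces} yields the immersion.
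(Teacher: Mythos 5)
Your computational core is sound: the nonzero $\Gamma_{ij}^k$ are as you list them, and feeding them into Lemma \ref{lem Gamma dim 3} together with the identities $\sum_i\langle X,T_i\rangle T_i=X$ and $\sum_i\langle X,T_i\rangle f_i=0$ (valid for $X\in TM$ by orthonormality of the $\underline{e}_i$) does give $\tilde{\Gamma}(X)=\left\{(2\tau-\sigma)\langle X,T\rangle(T-f)-\tau X\right\}\cdot\omega$, so the auxiliary data $T_1,T_2,f_1,f_2$ indeed drop out; the direction $(2)\Rightarrow(1)$ then goes through, including your determinant identity $\langle X,T_2\rangle T_1-\langle X,T_1\rangle T_2=-fJX$ (which holds precisely when the frame is positively oriented). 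The genuine gap is in $(1)\Rightarrow(2)$, and your proposed fix is circular. You justify the integrability of the propagation system for $(T_1,T_2,f_1,f_2)$ by the Gauss--Codazzi--Ricci equations, ``which hold by Section \ref{section fundamental theorem} thanks to the existence of $\psi$.'' But the proposition of Section \ref{section fundamental theorem} is established only under the standing hypotheses (1)--(2) of Section \ref{section main result}: in particular Lemma \ref{lemma formula nabla gamma}, which computes $\nabla\Gamma$ and is essential for identifying the curvature of the modified connection with the Gauss--Codazzi--Ricci obstructions, is proved by evaluating on a full frame of left-invariant sections satisfying (\ref{nabla field invariant}) --- exactly the frame you are trying to manufacture. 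Even granting flatness, you would still have to arrange $f(\underline{e}_3)=e_3^o$ and orientation-preservation in order to get the compatibility (\ref{def Gamma TM+E}) between the two $\Gamma$'s.

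The paper avoids the construction of an auxiliary frame altogether: with $\Gamma$ on $TM\oplus E$ defined intrinsically by the cross-product formula (\ref{def Gamma TM+E E k t}) (which needs only $\underline{e}_3=T+f\nu$), the spinor $\varphi=\psi^*$ solves (\ref{killing equation}) (Lemmas \ref{lemma Gamma E k t} and \ref{prop phi equiv psi}), and the bundle isomorphism is taken to be $f(Z):=\langle\langle Z\cdot\varphi,\varphi\rangle\rangle$. The computation (\ref{d xi computation}) combined with (\ref{eqn e3 invariant}) shows $f(\underline{e}_3)$ is constant; after replacing $\varphi$ by $\varphi\cdot a$ one gets $f(\underline{e}_3)=e_3^o$, whence (\ref{def Gamma TM+E}) holds because both $\Gamma$'s are built from the cross product and the distinguished third vector, and (\ref{nabla field invariant}) for $f$-constant sections follows as in Section \ref{section Rn}. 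If you wish to rescue your propagation idea non-circularly, observe instead that $\varphi$ is a parallel section of the principal bundle $U\Sigma$ for the connection (\ref{def nabla prime}), so that connection is flat and hence so is the induced vector connection on $TM\oplus E$; but the resulting parallel trivialization is precisely $Z\mapsto\langle\langle Z\cdot\varphi,\varphi\rangle\rangle$, i.e. you are led back to the paper's construction.
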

\begin{proof}
We consider the trivial line bundle $E=\R\nu,$ where $\nu$ is a unit section. The bundle $TM\oplus E$ is of rank 3, and is assumed to be oriented by the orientation of $TM$ and by $\nu.$ We suppose that it is endowed with the natural product metric. Let us denote by $\times$ the natural cross product in the fibers. We set 
$$\underline{e}_3=T+f\nu,$$
and, for all $X,Y\in TM\oplus E,$
\begin{equation}\label{def Gamma TM+E E k t}
\Gamma(X)(Y)=\left\{\tau(X-\langle X,\underline{e}_3\rangle \underline{e}_3)+(\sigma-\tau)\langle X,\underline{e}_3\rangle \underline{e}_3\right\}\times Y.
\end{equation}
Defining $B:TM\times TM\rightarrow E$ and its adjoint $B^*:TM\times E\rightarrow TM$ by
\begin{equation}\label{def B E k t}
B(X,Y)=\langle S(X),Y\rangle \nu\hspace{1cm}\mbox{and}\hspace{1cm}B^*(X,\nu)=S(X)
\end{equation}
for all $X,Y\in TM,$ the equations (\ref{eqn T E k t}) and (\ref{eqn f E k t}) are equivalent to the single equation 
\begin{equation}\label{eqn e3 invariant}
\nabla_X\underline{e}_3=\Gamma(X)(\underline{e}_3)-B(X,\underline{e}_3^T)+B^*(X,\underline{e}_3^N)
\end{equation}
for all $X\in TM,$ where $\nabla$ is the sum of the Levi-Civita connection on $TM$ and the trivial connection on $E.$ This is (\ref{nabla field invariant}) for $Z=\underline{e}_3.$ We will need the following expression for $\Gamma:$
\begin{lem}\label{lemma Gamma E k t}
For all $X\in TM,$ the linear map $\Gamma(X):TM\oplus E\rightarrow TM\oplus E$ defined by (\ref{def Gamma TM+E E k t})  is represented by the bivector
$$\Gamma(X)=\left\{\left(2\tau-\sigma\right)\langle X,T\rangle\left(T\cdot\nu-f\right)-\tau X\cdot\nu\right\}\cdot\omega.$$
\end{lem}
\begin{proof}
The linear map $\Gamma(X)$ is represented by the bivector
$$\Gamma(X)=\frac{1}{2}\left(\underline{e}_1\cdot\Gamma(X)(\underline{e}_1)+\underline{e}_2\cdot\Gamma(X)(\underline{e}_2)+\underline{e}_3\cdot\Gamma(X)(\underline{e}_3)\right)$$
where $\underline{e}_1,\underline{e}_2$ are such that $\underline{e}_1,\underline{e}_2,\underline{e}_3$ is a positively oriented and orthonormal basis of $TM\oplus E$ (see Lemma \ref{lem1 ap1}); thus, a straightforward computation shows that $\Gamma(X)$ is represented by the bivector
\begin{equation}\label{lem Gamma E k t}
\Gamma(X)=-\tau(X\times \underline{e}_3)\cdot \underline{e}_3+(\sigma-\tau)\langle X,\underline{e}_3\rangle\ \underline{e}_1\cdot \underline{e}_2.
\end{equation}
The following formula may be checked by a direct computation: for $X,Y\in TM\oplus E,$
$$X\times Y=-\left( X\cdot Y+\langle X,Y\rangle\right)\underline{e}_1\cdot \underline{e}_2\cdot \underline{e}_3;$$
this gives
\begin{eqnarray*}
(X\times \underline{e}_3)\cdot \underline{e}_3&=&-\left(X\cdot \underline{e}_3+\langle X,\underline{e}_3\rangle\right)\underline{e}_1\cdot \underline{e}_2\cdot \underline{e}_3\cdot \underline{e}_3\\
&=&\left(X-\langle X,\underline{e}_3\rangle \underline{e}_3\right)\underline{e}_1\cdot \underline{e}_2\cdot \underline{e}_3\\
&=&\left(X-\langle X,T\rangle\left(T+f\nu\right)\right)\cdot\omega\cdot\nu\\
&=& \left(X\cdot\nu-\langle X,T\rangle\left(T\cdot\nu-f\right)\right)\cdot\omega.
\end{eqnarray*}
Moreover, 
\begin{eqnarray*}
\langle X,\underline{e}_3\rangle\ \underline{e}_1\cdot \underline{e}_2&=&\langle X,T\rangle\left(-\underline{e}_1\cdot \underline{e}_2\cdot \underline{e}_3\cdot \underline{e}_3\right)\\
&=&\langle X,T\rangle\left(-\omega\cdot\nu\cdot (T+f\nu)\right)\\
&=&-\langle X,T\rangle \left(T\cdot\nu-f\right)\cdot\omega.
\end{eqnarray*}
Plugging these two formulas in (\ref{lem Gamma E k t}) we get the result.
\end{proof}
\noindent We deduce the following key lemma:
\begin{lem}\label{prop phi equiv psi}
A spinor field $\varphi\in\Gamma(U\Sigma)$ solution of (\ref{killing equation}) is equivalent to a spinor field $\psi\in\Gamma(\Sigma M)$ solution of (\ref{eqn spinor E k t}).
\end{lem}
\begin{proof}
We use the identification $\psi\in\Gamma(\Sigma M)\mapsto \psi^*\in\Gamma(\Sigma)$ described at the beginning of the section; we recall that, for all $X\in TM,$
\begin{equation}\label{properties ident psi phi}
\left(\nabla_X\psi\right)^*=\nabla_X(\psi^*)\hspace{.5cm}\mbox{and}\hspace{.5cm}(X\cdot\psi)^*=X\cdot\nu\cdot(\psi^*).
\end{equation}
Thus, if $\varphi\in\Gamma(U\Sigma)$ is a solution of (\ref{killing equation}) and if $\psi\in\Gamma(\Sigma M)$ is such that $\psi^*=\varphi,$ using (\ref{properties ident psi phi}) together with the formula
$$\sum_{j=1}^pe_j\cdot B(X,e_j)=\sum_{j=1}^pe_j\cdot \langle S(X),e_j\rangle\nu=S(X)\cdot\nu$$
and Lemma \ref{lemma Gamma E k t}, we get:
\begin{eqnarray*}
(\nabla_X\psi)^*&=&\nabla_X\varphi\\
&=&-\frac{1}{2}S(X)\cdot\nu\cdot\varphi+\frac{1}{2}\left\{\left(2\tau-\sigma\right)\langle X,T\rangle\left(T\cdot\nu-f\right)-\tau X\cdot\nu\right\}\cdot\omega\cdot\varphi\\
&=&\left(-\frac{1}{2}S(X)\cdot\psi+\frac{1}{2}\left\{\left(2\tau-\sigma\right)\langle X,T\rangle\left(T-f\right)-\tau X\right\}\cdot\omega\cdot\psi\right)^*.
\end{eqnarray*}
This gives (\ref{eqn spinor E k t}). Reciprocally, if $\psi$ is a solution of (\ref{eqn spinor E k t}), the spinor field $\varphi=\psi^*$ satisfies (\ref{killing equation}). This proves the lemma.
\end{proof}
\noindent Instead of $\psi\in\Gamma(\Sigma M)$ solution of (\ref{eqn spinor E k t}) we may thus consider $\varphi\in\Gamma(U\Sigma)$ solution of (\ref{killing equation}). Theorem \ref{thm E k t} will thus be a consequence of Theorem \ref{thm main result} if we can define a bundle isomorphism $f:TM\oplus E\rightarrow M\times\mathcal{G}$ such that (\ref{def Gamma TM+E}) and (\ref{nabla field invariant}) hold. Let us set 
$$f(Z)=\langle\langle Z\cdot\varphi,\varphi\rangle\rangle.$$
We first observe that $f(\underline{e}_3)$ is constant: indeed, for all $X\in TM,$
\begin{equation*}
\partial_X(f(\underline{e}_3))=\langle\langle \nabla_X\underline{e}_3\cdot\varphi,\varphi\rangle\rangle+\langle\langle \underline{e}_3\cdot\nabla_X\varphi,\varphi\rangle\rangle+\langle\langle \underline{e}_3\cdot\varphi,\nabla_X\varphi\rangle\rangle=0
\end{equation*}
in view of (\ref{eqn e3 invariant}), (\ref{killing equation}) and the computation in (\ref{d xi computation}). Moreover, since $f$ preserves the norm of the vectors, $f(\underline{e}_3)$ is a unit vector. Replacing $\varphi$ by $\varphi\cdot a$ for some $a\in Spin(\mathcal{G})$ if necessary, we may thus assume that $f(\underline{e}_3)=e_3^o.$ We now check (\ref{def Gamma TM+E}): since the map $f$ is an orientation preserving isometry and using $f(\underline{e}_3)=e_3^o,$ we have, for all $X,Y\in TM,$
\begin{eqnarray*}
f(\Gamma(X)(Y))&=&f\left(\left\{\tau(X-\langle X,\underline{e}_3\rangle \underline{e}_3)+(\sigma-\tau)\langle X,\underline{e}_3\rangle \underline{e}_3\right\}\times Y\right)\\
&=&\left\{\tau(f(X)-\langle f(X),f(\underline{e}_3)\rangle f(\underline{e}_3))+(\sigma-\tau)\langle f(X),f(\underline{e}_3)\rangle f(\underline{e}_3)\right\}\times f(Y)\\
&=&\left\{\tau(f(X)-\langle f(X),e_3^o\rangle e_3^o)+(\sigma-\tau)\langle f(X),e_3^o\rangle e_3^o\right\}\times f(Y)\\
&=&\Gamma(f(X))(f(Y)).
\end{eqnarray*}
Finally, the proof of (\ref{nabla field invariant}) is very similar to the proof of this identity made in Section \ref{section Rn} for $G=\R^n:$ we only have to add the term involving $\Gamma$ which appears in the expression (\ref{killing equation}) of the covariant derivative of $\varphi$; we leave the details to the reader.
\end{proof}
\begin{rem}
We also get an explicit representation formula: the immersion is given by the Darboux integral of $\xi:$ $X\mapsto\langle\langle X\cdot\varphi,\varphi\rangle\rangle,$ which may be written in terms of $\psi$ by the formula (\ref{explicit representation dim 3}).
\end{rem}
We deduce the following result, first obtained by Daniel in \cite{Da} using the moving frame method:
\begin{cor}
If $S,$ $T,$ $f,$ $\kappa$ and $\tau$ satisfy (\ref{eqn T f E k t})-(\ref{eqn f E k t}), the Gauss equation 
\begin{equation}\label{Gauss equation E k t}
K=\det S+\tau^2+\left(\kappa-4\tau^2\right)f^2
\end{equation}
and the Codazzi equation
\begin{equation}\label{Codazzi equation E k t}
\nabla_X(SY)-\nabla_Y(SX)-S([X,Y])=(\kappa-4\tau^2)f(\langle Y,T\rangle X-\langle X,T\rangle Y),
\end{equation}
then there exists an isometric immersion of $M$ into $E(\kappa,\tau)$ with shape operator $S.$ Moreover the immersion is unique up to a global isometry of $E(\kappa,\tau)$ preserving the orientations.
\end{cor}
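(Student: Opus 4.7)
The plan is to reduce the corollary to the abstract Fundamental Theorem proved as the corollary of Section \ref{section fundamental theorem}, by building the ``abstract isometric immersion'' datum $(E,B,\Gamma,f)$ suggested by the proof of Theorem \ref{thm E k t}, and then matching the hypotheses of the present corollary to the integrability conditions of that general theorem.

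\textbf{Step 1 (abstract setup).} Take $E=\R\nu$ as the trivial oriented line bundle equipped with the trivial connection, set $B(X,Y)=\langle S(X),Y\rangle\nu$, put $\underline{e}_3:=T+f\nu$ (a unit section of $TM\oplus E$ by (\ref{eqn T f E k t})), and define $\Gamma:TM\oplus E\to\Lambda^2(TM\oplus E)$ by (\ref{def Gamma TM+E E k t}). With these definitions the hypotheses (\ref{eqn T E k t})--(\ref{eqn f E k t}) are exactly the statement that $\underline{e}_3$ satisfies the invariance equation (\ref{eqn e3 invariant}), i.e.\ (\ref{nabla field invariant}) for $Z=\underline{e}_3$.

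\textbf{Step 2 (matching Gauss, Codazzi, Ricci).} Using the formula (\ref{curvature nablaG}) applied to $\Gamma$ of (\ref{def Gamma TM+E E k t}), I would compute $R^G(X,Y)\in\Lambda^2(TM\oplus E)$ explicitly: the structure of (\ref{def Gamma TM+E E k t}) forces $R^G$ to split into a piece of constant sectional curvature $\tau^2$ coming from the isotropic part $\tau(\cdot-\langle\cdot,\underline{e}_3\rangle\underline{e}_3)$ and an anisotropy contribution proportional to $\sigma-\tau$ and directed by $\underline{e}_3$; after a straightforward substitution and using (\ref{eqn T f E k t}) this produces, for tangent $X,Y,Z$ and normal $\nu$, the constants $\kappa=2\sigma\tau$ and $\kappa-4\tau^2$ appearing in (\ref{Gauss equation E k t}) and (\ref{Codazzi equation E k t}). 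Hence the abstract Gauss equation (\ref{Gauss equation G}) reduces to (\ref{Gauss equation E k t}) and the abstract Codazzi equation (\ref{Codazzi equation G}) to (\ref{Codazzi equation E k t}). The abstract Ricci equation (\ref{Ricci equation G}) is automatic here since the normal bundle has rank one.

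\textbf{Step 3 (global trivialization).} To apply the abstract fundamental theorem I still need a bundle isomorphism $f:TM\oplus E\to M\times\mathcal{G}$ satisfying (\ref{def Gamma TM+E}) and with (\ref{nabla field invariant}) holding on every left-invariant section. Introduce the metric connection $\nabla'$ on $TM\oplus E$ defined by $\nabla'_X Z:=\nabla_X Z-\Gamma(X)Z+B(X,Z^T)-B^*(X,Z^N)$, so that (\ref{nabla field invariant}) for $Z$ is equivalent to $\nabla'_X Z=0$. Computing $R^{\nabla'}$ exactly as in Section \ref{section fundamental theorem}, one sees that $\nabla'$ is flat if and only if the abstract Gauss, Codazzi and Ricci equations hold; by Step 2 this is the case. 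Since $M$ is simply connected, $\underline{e}_3$ is already $\nabla'$-parallel, and $\nabla'$ is metric, one can extend $\underline{e}_3$ to a global positively oriented $\nabla'$-parallel orthonormal frame $(\underline{e}_1,\underline{e}_2,\underline{e}_3)$ and declare $f(\underline{e}_i)=e_i^o$; the compatibility (\ref{def Gamma TM+E}) is then automatic from (\ref{def Gamma TM+E E k t}) and (\ref{Gamma E k t}).

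\textbf{Step 4 (conclusion and uniqueness).} The datum $(E,B,\Gamma,f)$ now satisfies all the hypotheses of the corollary in Section \ref{section fundamental theorem}, which yields an isometric immersion $F:M\to E(\kappa,\tau)$ with second fundamental form $B$, i.e.\ shape operator $S$. Uniqueness up to a rigid motion (\ref{rigid motion}) is inherited from that corollary; it remains to identify such maps $L_b\circ\Phi_a$ with orientation-preserving isometries of $E(\kappa,\tau)$, which follows because $d(\Phi_a)_e=\mathrm{Ad}(a)$ is a Lie algebra automorphism of $\mathcal{G}$ lying in $SO(\mathcal{G})$, so that $\Phi_a$ preserves both the left-invariant metric and the orientation, and $L_b$ is an isometry by left-invariance.

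The main obstacle I anticipate is the calculation in Step 2: the explicit computation of $R^G$ from the anisotropic $\Gamma$ of (\ref{Gamma E k t}), and the bookkeeping that produces the constants $\kappa$ and $\kappa-4\tau^2$ in the right places. Everything else is essentially formal once the abstract framework of Sections \ref{section main result} and \ref{section fundamental theorem} is invoked.
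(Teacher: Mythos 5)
Your argument is correct, and it rests on the same mechanism as the paper's one-sentence proof: identify (\ref{Gauss equation E k t})--(\ref{Codazzi equation E k t}) with the abstract Gauss and Codazzi equations (\ref{Gauss equation G})--(\ref{Codazzi equation G}) for $B=\langle S(\cdot),\cdot\rangle\nu$ and the $\Gamma$ of (\ref{def Gamma TM+E E k t}), observe that Ricci is vacuous in codimension one, and invoke the integrability result of Section \ref{section fundamental theorem}. Where you genuinely diverge is in how the trivialization $f:TM\oplus E\rightarrow M\times\mathcal{G}$ required by Theorem \ref{thm main result} is produced. The paper gets a solution $\varphi$ of (\ref{killing equation}) first and then, inside the proof of Theorem \ref{thm E k t}, manufactures $f$ spinorially as $f(Z)=\langle\langle Z\cdot\varphi,\varphi\rangle\rangle$, normalizing so that $f(\underline{e}_3)=e_3^o$. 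Your Step 3 instead builds $f$ classically: the auxiliary connection $\nabla'=\nabla-\Gamma+B-B^*$ on $TM\oplus E$ is metric, its flatness is exactly Gauss--Codazzi--Ricci, and on a simply connected $M$ one completes the already $\nabla'$-parallel section $\underline{e}_3$ to a parallel positively oriented orthonormal frame; you then apply the abstract Fundamental Theorem corollary directly rather than passing through Theorem \ref{thm E k t}. Both routes hinge on the same curvature computation; yours makes hypotheses (1)--(2) of Section \ref{section main result} explicit and is closer in spirit to the moving-frame arguments of Daniel and Piccione--Tausk, at the cost of a second, vector-bundle-level flatness argument that the spinorial route absorbs into the Killing equation. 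The one computation you defer --- that the anisotropic $\Gamma$ of (\ref{def Gamma TM+E E k t}) yields precisely the constants $\tau^2$ and $\kappa-4\tau^2$ in (\ref{Gauss equation E k t})--(\ref{Codazzi equation E k t}) --- is equally left implicit in the paper, so nothing is missing relative to the paper's own level of detail.
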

\begin{proof}
The equations (\ref{Gauss equation E k t}) and (\ref{Codazzi equation E k t}) are equivalent to the Gauss and Codazzi equations (\ref{Gauss equation G}) and (\ref{Codazzi equation G}) where $B$ is defined by (\ref{def B E k t}). They are thus exactly the integrability conditions for (\ref{killing equation}), and consequently also for (\ref{eqn spinor E k t}).
\end{proof}

\subsubsection{Three-dimensional semi-direct products}
We consider here a semi-direct product $\R^2\rtimes_A\R$ with
$$A=\left(\begin{array}{cc}a&b\\c&d\end{array}\right);$$
if $(e_1^o,e_2^o,e_3^o)$ stands for the canonical basis of $\mathcal{G}=\R^2\times\R,$ the Lie bracket is given by
$$[e_1^o,e_2^o]=0,\hspace{1cm}[e_3^o,e_1^o]=ae_1^o+ce_2^o,\hspace{1cm} [e_3^o,e_2^o]=be_1^o+de_2^o.$$
We equip $\R^2\rtimes_A\R$ with the left invariant metric such that $(e_1^o,e_2^o,e_3^o)$ is orthonormal. By the Koszul formula, we get
\begin{equation}\label{nabla semi-direct1}
\nabla_{e_1^o}e_1^o=a\ e_3^o,\hspace{.5cm}\nabla_{e_1^o}e_2^o=\frac{b+c}{2}\ e_3^o,\hspace{.5cm}\nabla_{e_1^o}e_3^o=-a\ e_1^o-\frac{b+c}{2}\ e_2^o,
\end{equation}
\begin{equation}\label{nabla semi-direct2}
\nabla_{e_2^o}e_1^o=\frac{b+c}{2}\ e_3^o,\hspace{.5cm}\nabla_{e_2^o}e_2^o=d\ e_3^o,\hspace{.5cm}\nabla_{e_2^o}e_3^o=-\frac{b+c}{2}\ e_1^o-d\ e_2^o
\end{equation}
and
\begin{equation}\label{nabla semi-direct3}
\nabla_{e_3^o}e_1^o=\frac{c-b}{2}\ e_2^o,\hspace{.5cm}\nabla_{e_3^o}e_2^o=\frac{b-c}{2}\ e_1^o,\hspace{.5cm}\nabla_{e_3^o}e_3^o=0,
\end{equation}
and deduce
$$\Gamma(X)=\left(aX_1+\frac{b+c}{2} X_2\right)e_1^o\cdot e_3^o+\left(\frac{b+c}{2}X_1+dX_2\right)e_2^o\cdot e_3^o+\frac{c-b}{2}X_3e_1^o\cdot e_2^o$$
for all $X\in\mathcal{G}.$ We first assume that $M$ is an oriented surface in $G=\R^2\rtimes_A\R.$ Recalling that
$$(T_j+f_j\nu)\cdot(T_k+f_k\nu)=\epsilon_{jk}(f_{l_{jk}}-T_{l_{jk}}\cdot\nu)\cdot\omega$$
(see the proof of Lemma \ref{lem Gamma dim 3}), we obtain 
\begin{eqnarray*}
\Gamma(X)&=&-\left(aX_1+\frac{b+c}{2} X_2\right)(f_2-T_2\cdot\nu)\cdot\omega\\
&&+\left(\frac{b+c}{2}X_1+dX_2\right)(f_1-T_1\cdot\nu)\cdot\omega+\frac{c-b}{2}X_3(f_3-T_3\cdot\nu)\cdot\omega
\end{eqnarray*}
and
\begin{eqnarray}
\tilde{\Gamma}(X)&=&-\left(aX_1+\frac{b+c}{2} X_2\right)(f_2-T_2)\cdot\omega\label{tilde Gamma semi-direct}\\
&&+\left(\frac{b+c}{2}X_1+dX_2\right)(f_1-T_1)\cdot\omega+\frac{c-b}{2}X_3(f_3-T_3)\cdot\omega.\nonumber
\end{eqnarray}
Conversely, we consider an oriented Riemannian surface $M,$ and a symmetric operator $S:TM\rightarrow TM.$ We suppose that there exist tangent vectors fields $T_i\in\Gamma(TM)$ and functions $f_i\in C^\infty(M)$ for $1\leqslant i\leqslant 3$ satisfying 
\begin{eqnarray}\label{sol1}
\langle T_i,T_j\rangle +f_if_j=\delta_i^j
\end{eqnarray}
for all $1\leq i,j\leq 3,$ and the equations \eqref{eqn 1 trad} and \eqref{eqn 2 trad} in Remark \ref{rmk cond frame}, with the coefficients $\Gamma_{ij}^k$ given by (\ref{nabla semi-direct1})-(\ref{nabla semi-direct3}). Theorem \ref{thm hypersurfaces} then yields the following result:
\begin{thm}\label{immersion semi-direct} If $M$ is simply connected, the following two statements are equivalent:
\begin{enumerate}
\item there exists an isometric immersion of $M$ into $\R^2\rtimes_A\R$ with shape operator $S$;
\item there exists $\psi\in \Gamma(\Sigma M)$ such that $\vert \psi \vert =1$ and 
\begin{equation}\label{killing equation sol3}
\nabla_X\psi= -\frac{1}{2}S(X)\cdot \psi+\frac{1}{2}\tilde{\Gamma}(X)\cdot\psi
\end{equation}
for all $X\in TM.$
\end{enumerate}
\end{thm}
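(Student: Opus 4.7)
The plan is to derive this theorem as a direct specialization of the general hypersurface statement in Theorem \ref{thm hypersurfaces}. Recall that Theorem \ref{thm hypersurfaces} establishes the desired equivalence between existence of an immersion with shape operator $S$ and existence of a unit spinor $\psi \in \Gamma(\Sigma M)$ satisfying $\nabla_X\psi = -\tfrac{1}{2}S(X)\cdot\psi + \tfrac{1}{2}\tilde{\Gamma}(X)\cdot\psi$, provided the hypotheses of Section \ref{section main result} hold, namely the existence of a metric-preserving bundle isomorphism $f:TM\oplus E \to M\times\mathcal{G}$ for which $\Gamma$ transfers correctly and the left invariant sections satisfy (\ref{nabla field invariant}). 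By Remark \ref{rmk cond frame}, this last piece is equivalent in the hypersurface setting to producing vector fields $T_i$ and functions $f_i$ satisfying (\ref{cond e_i orth}), (\ref{eqn 1 trad}) and (\ref{eqn 2 trad}); these data are exactly what the theorem's hypotheses provide, with the specific $\Gamma_{ij}^k$ read off from (\ref{nabla semi-direct1})--(\ref{nabla semi-direct3}).

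Thus the only thing left to verify is that the abstract bivector $\tilde{\Gamma}(X)$ defined by formula (\ref{def Gamma tilde}) in Theorem \ref{thm hypersurfaces}, when one plugs in the Christoffel coefficients coming from (\ref{nabla semi-direct1})--(\ref{nabla semi-direct3}), reduces to the explicit expression (\ref{tilde Gamma semi-direct}) appearing in the statement of Theorem \ref{immersion semi-direct}. This computation has in fact been carried out in the paragraphs preceding the theorem: starting from
\[
\Gamma(X) = \left(aX_1+\tfrac{b+c}{2}X_2\right)e_1^o\cdot e_3^o + \left(\tfrac{b+c}{2}X_1+dX_2\right)e_2^o\cdot e_3^o + \tfrac{c-b}{2}X_3\, e_1^o\cdot e_2^o
\]
on $\mathcal{G}$, one transports to $TM\oplus E$, then applies the Clifford identity
\[
(T_j+f_j\nu)\cdot(T_k+f_k\nu) = \epsilon_{jk}(f_{l_{jk}} - T_{l_{jk}}\cdot\nu)\cdot\omega
\]
(from the proof of Lemma \ref{lem Gamma dim 3}), and finally drops the $\cdot\nu$ factor under the identification (\ref{identif spineurs}) to obtain $\tilde{\Gamma}$, landing precisely on (\ref{tilde Gamma semi-direct}).

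With these pieces in place, the proof of Theorem \ref{immersion semi-direct} amounts to the single sentence: \emph{Under the stated hypotheses, Theorem \ref{thm hypersurfaces} applies with $\tilde{\Gamma}$ given by (\ref{tilde Gamma semi-direct}), and the equivalence is exactly the claim.} The main (mild) obstacle is bookkeeping: verifying that the three types of conditions in Remark \ref{rmk cond frame} --- the orthonormality relation (\ref{cond e_i orth}), and the two transport equations (\ref{eqn 1 trad}) and (\ref{eqn 2 trad}) with the particular $\Gamma_{ij}^k$ of the semi-direct product --- are indeed those packaged into the hypothesis of the theorem. Since the theorem explicitly assumes \eqref{sol1} together with the equations of Remark \ref{rmk cond frame} for the $\Gamma_{ij}^k$ read off from (\ref{nabla semi-direct1})--(\ref{nabla semi-direct3}), this matching is immediate and the conclusion follows.
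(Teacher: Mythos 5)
Your proposal is correct and matches the paper's own treatment: the paper states Theorem \ref{immersion semi-direct} as an immediate consequence of Theorem \ref{thm hypersurfaces}, after verifying via Remark \ref{rmk cond frame} that the data $(T_i,f_i)$ satisfying \eqref{sol1}, \eqref{eqn 1 trad} and \eqref{eqn 2 trad} supply the required bundle isomorphism $f$, and after computing that \eqref{def Gamma tilde} specializes to \eqref{tilde Gamma semi-direct} using the identity from the proof of Lemma \ref{lem Gamma dim 3}. No gaps.
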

\noindent
{\bf The metric Lie group $Sol_3$.}
Now, we describe the special case of a surface in $Sol_3:$ this achieves the spinor representation of immersions of surfaces into 3-dimensional Riemannian homogeneous spaces \cite{Ro}.

Let us recall that $Sol_3$ is the only metric Lie group whose isometry group is $3$-dimensional. It is defined as follows: its Lie algebra is $\mathcal{G}=\R^3,$ with the bracket defined on the canonical basis $(e_1^o,e_2^o,e_3^o)$ by 
$$[e_1^o,e_2^o]=0,\hspace{1cm} [e_2^o,e_3^o]=-e_2^o,\hspace{1cm} [e_3^o,e_1^o]=- e_1^o.$$
This is the semi-direct product $\R^2\rtimes_A\R$ with $a=-1,b=c=0,d=1.$ The metric on $\mathcal{G}$ is the canonical metric, i.e., the metric such that the basis $(e_1^o,e_2^o,e_3^o)$ is orthonormal. By the formulas (\ref{nabla semi-direct1})-(\ref{nabla semi-direct3}), the Levi-Civita connection is then such that
\begin{equation}\label{Gamma ijk sol3}
\Gamma_{11}^3=-\Gamma_{13}^1=-1,\hspace{1cm}\Gamma_{22}^3=-\Gamma_{23}^2=1
\end{equation}
and $\Gamma_{ij}^k=0$ for the other indices.  
\\

Let us consider an oriented Riemannian surface $M,$ and a symmetric operator $S:TM\rightarrow TM.$ We suppose that there exist tangent vectors fields $T_i\in\Gamma(TM)$ and functions $f_i\in C^\infty(M)$ for $1\leqslant i\leqslant 3$ satisfying 
\begin{eqnarray}\label{sol1}
\langle T_i,T_j\rangle +f_if_j=\delta_i^j
\end{eqnarray}
for all $1\leq i,j\leq 3,$ and, for all $X\in TM,$
\begin{eqnarray}\label{sol2}
\nabla_{X}T_i=(-1)^{i}\langle X,T_i\rangle T_3+f_iS(X),\\
df_i(X)=(-1)^{i}\langle X,T_i\rangle f_3-\langle SX,T_i\rangle \nonumber
\end{eqnarray}
for $1\leqslant i\leqslant 2,$
\begin{eqnarray}\label{sol3}
\nabla_{X}T_3=\sum_{j=1}^{2}(-1)^{j+1}\langle X,T_j\rangle T_j+f_3S(X),\\
df_3(X)=\sum_{j=1}^{2}(-1)^{j+1}\langle X,T_j\rangle f_j-\langle S(X),T_3\rangle.\nonumber
\end{eqnarray}
The equations \eqref{sol2} and \eqref{sol3} are the equations \eqref{eqn 1 trad} and \eqref{eqn 2 trad} in Remark \ref{rmk cond frame}, with the coefficients $\Gamma_{ij}^k$ given by (\ref{Gamma ijk sol3}). According to (\ref{tilde Gamma semi-direct}) with $a=-1,b=c=0$ and $d=1$ we set
\begin{equation}
\tilde{\Gamma}(X)=\left\{\langle X,T_1\rangle(f_2-T_2)+\langle X,T_2\rangle (f_1-T_1)\right\}\cdot \omega
\end{equation}
for all $X\in TM.$ Theorem \ref{immersion semi-direct} then gives a spinor characterization of an immersion in $Sol_3.$

As a corollary, we obtain a new proof of a result by Lodovici \cite{Lod} concerning existence and uniqueness of isometric immersions in $Sol_3,$ since equation (\ref{killing equation sol3}) is solvable if and only if the equations of Gauss and Codazzi hold (see Section \ref{section fundamental theorem}).\\ \\
{\bf $\HH^2\times\R$ as a metric Lie group.}
Finally, viewing $\HH^2\times\R$ as a metric Lie group, we obtain a new spinor characterization of an immersion in $\HH^2\times\R$ which differs from \cite{Ro} where the product point of view was used.\\
We recall that $\HH^2\times\R$ is the semi-direct product $\R^2\rtimes_A\R$ with $a=1,b=c=d=0.$ The metric on $\mathcal{G}$ is the canonical metric, i.e., the metric such that the basis $(e_1^o,e_2^o,e_3^o)$ is orthonormal. Lie bracket is given by
$$[e_1^o,e_2^o]=0,\hspace{1cm}[e_3^o,e_1^o]=e_1^o,\hspace{1cm} [e_3^o,e_2^o]=0.$$
By the formulas (\ref{nabla semi-direct1})-(\ref{nabla semi-direct3}), the Levi-Civita connection is then such that
\begin{equation}\label{Gamma ijk H2R}
\Gamma_{11}^3=-\Gamma_{13}^1=1
\end{equation}
and $\Gamma_{ij}^k=0$ for the other indices.  \\
Let us consider an oriented Riemannian surface $M,$ and a symmetric operator $S:TM\rightarrow TM.$ We suppose that there exist tangent vectors fields $T_i\in\Gamma(TM)$ and functions $f_i\in C^\infty(M)$ for $1\leqslant i\leqslant 3$ satisfying 
\begin{eqnarray}\label{H2R1}
\langle T_i,T_j\rangle +f_if_j=\delta_i^j
\end{eqnarray}
for all $1\leq i,j\leq 3,$ and, for all $X\in TM,$
\begin{eqnarray}\label{H2RT1}
\nabla_{X}T_1=\langle X,T_1\rangle T_3+f_1S(X),\\
df_1(X)=\langle X,T_1\rangle f_3-\langle SX,T_1\rangle, \nonumber
\end{eqnarray}
\begin{eqnarray}\label{H2RT2}
\nabla_{X}T_2=f_2S(X),\\
df_2(X)=-\langle SX,T_2\rangle, \nonumber
\end{eqnarray}
\begin{eqnarray}\label{H2RT3}
\nabla_{X}T_3=-\langle X,T_3\rangle T_1+f_3S(X),\\
df_1(X)=-\langle X,T_3\rangle f_1-\langle SX,T_3\rangle. \nonumber
\end{eqnarray}
With these identities and according to (\ref{tilde Gamma semi-direct}) with $a=1$,$b=c=d=0$, we set
\begin{equation}
\tilde{\Gamma}(X)=-\langle X,T_1\rangle(f_2-T_2)\cdot \omega
\end{equation}
for all $X\in TM.$ Theorem \ref{immersion semi-direct} then gives a spinor characterization of an immersion in $\HH^2\times\R$.

\subsection{CMC-surfaces in a 3-dimensional metric Lie group}
The aim here is to show that the representation formula for CMC-surfaces in a 3-dimensional metric Lie group by Meeks, Mira, Perez and Ros \cite[Theorem 3.12]{MP} may be obtained as a consequence of the general representation formula in Theorem \ref{thm main result}. For sake of brevity we assume that the group $G$ is unimodular and only give the principal arguments, without details. Under this hypothesis, there exists an orthonormal basis $(e_1^o,e_2^o,e_3^o)$ of the Lie algebra $\mathcal{G}$ and constants $\mu_1,\mu_2,\mu_3\in\R$ such that the Levi-Civita connection of $G$ is given by
$$\Gamma(X)(e_1^o):=\nabla_Xe_1^o=X_3\mu_3e_2^o-X_2\mu_2e_3^o,$$
$$\Gamma(X)(e_2^o):=\nabla_Xe_2^o=-X_3\mu_3e_1^o+X_1\mu_1e_3^o,$$
$$\Gamma(X)(e_3^o):=\nabla_Xe_3^o=X_2\mu_2e_1^o-X_1\mu_1e_2^o$$
(see e.g. \cite[Section 2.6]{MP}), i.e.
\begin{eqnarray}
\Gamma(X)&=&\frac{1}{2}\left(e_1^o\cdot\Gamma(X)(e_1^o)+e_2^o\cdot\Gamma(X)(e_2^o)+e_3^o\cdot\Gamma(X)(e_3^o)\right)\nonumber\\
&=&X_1\mu_1 e_2^o\cdot e_3^o+X_2\mu_2 e_3^o\cdot e_1^o+X_3\mu_3 e_1^o\cdot e_2^o\label{expr Gamma X eio}
\end{eqnarray}
for all $X\in\mathcal{G}.$ Following \cite{MP} we introduce the $H$-potential of the group $G$
\begin{equation}
R(g)=H\left(1+|g|^2\right)^2-\frac{i}{2}\left(\mu_1|1-g^2|^2+\mu_2|1+g^2|^2+4\mu_3|g|^4\right)
\end{equation}
for all $g\in\overline{\mathbb{C}}.$ The importance of this quantity appears in the following lemma, which will permit to express the right-hand side of the Dirac equation (\ref{dirac equation}):
\begin{lem}\label{Gamma CMC}
Let us consider a positively oriented and orthonormal basis $e_1,e_2,\nu$ of $\mathcal{G}$ and set, for $\nu=\nu_1e_1^o+\nu_2e_2^o+\nu_3e_3^o,$
\begin{equation}\label{def T nu}
T(\nu)=\mu_1\nu_1e_1^o+\mu_2\nu_2e_2^o+\mu_3\nu_3e_3^o,
\end{equation}
$A=\frac{1}{2}\langle e_2,T(\nu)\rangle$ and $B=-\frac{1}{2}\langle e_1,T(\nu)\rangle.$ Then, if
$$g=\frac{\nu_1+i\nu_2}{1+\nu_3}$$
is the stereographic projection of $\nu\in S^2$ with respect to the south pole $-e_3^o$ of $S^2$, we have
\begin{eqnarray*}
H\nu+\frac{1}{2}\left(e_1\cdot\Gamma(e_1)+e_2\cdot\Gamma(e_2)\right)&=&\frac{1}{(1+|g|^2)^2}\left(\Re e\ R(g)-\Im m\ R(g)\ e_1\cdot e_2\right)\cdot \nu\\
&&+Ae_1+Be_2.
\end{eqnarray*}
\end{lem}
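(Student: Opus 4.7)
The plan is to compute the LHS explicitly in $Cl(\mathcal{G})$ using Hodge--Clifford duality in dimension 3 and then match vector and trivector components of both sides.

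First, I will simplify $\Gamma$. In dimension $3$ the volume element $\omega_\mathcal{G} := e_1^o\cdot e_2^o\cdot e_3^o = e_1\cdot e_2\cdot\nu$ is central, and one checks the duality relations $-\omega_\mathcal{G}\cdot e_1^o = e_2^o\cdot e_3^o$, $-\omega_\mathcal{G}\cdot e_2^o = e_3^o\cdot e_1^o$, $-\omega_\mathcal{G}\cdot e_3^o = e_1^o\cdot e_2^o$. Hence the formula (\ref{expr Gamma X eio}) collapses to
$$\Gamma(X)=-\omega_\mathcal{G}\cdot\widetilde{T}(X),$$
where $\widetilde{T}:\mathcal{G}\to\mathcal{G}$ is the self-adjoint operator extending (\ref{def T nu}) by $\widetilde{T}(e_i^o)=\mu_i e_i^o$ (in particular $\widetilde{T}(\nu)=T(\nu)$).

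Next, using centrality of $\omega_\mathcal{G}$ and the Clifford identity $e_i\cdot\widetilde{T}(e_i)=-\langle e_i,\widetilde{T}(e_i)\rangle+e_i\wedge\widetilde{T}(e_i)$, I will split $\sum_{i=1,2} e_i\cdot\Gamma(e_i)=-\omega_\mathcal{G}\cdot\sum_{i=1,2} e_i\cdot\widetilde{T}(e_i)$ into a scalar part and a bivector part. The self-adjointness of $\widetilde{T}$ kills the $e_1\wedge e_2$ coefficient of the bivector, while the $e_j\wedge\nu$ coefficients produce $\langle e_j,T(\nu)\rangle$, i.e.\ $-2B$ and $2A$. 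The scalar part collects to $-\mathrm{tr}_t(\widetilde{T})$, where $\mathrm{tr}_t(\widetilde{T})=\langle\widetilde{T}(e_1),e_1\rangle+\langle\widetilde{T}(e_2),e_2\rangle=\mathrm{tr}(\widetilde{T})-\langle T(\nu),\nu\rangle=\sum_{j=1}^{3}\mu_j(1-\nu_j^2)$. Applying $-\omega_\mathcal{G}$ and using $\omega_\mathcal{G}\cdot(e_1\wedge\nu)=e_2$, $\omega_\mathcal{G}\cdot(e_2\wedge\nu)=-e_1$ (direct in the adapted frame), I obtain
$$\tfrac{1}{2}\sum_{i=1}^{2} e_i\cdot\Gamma(e_i)=\tfrac{1}{2}\,\mathrm{tr}_t(\widetilde{T})\,\omega_\mathcal{G}+A e_1+B e_2.$$

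Finally, adding $H\nu$ and rewriting $\omega_\mathcal{G}=e_1\cdot e_2\cdot\nu$, the claim reduces to matching the coefficients of $\nu$ and of $e_1\cdot e_2\cdot\nu$, i.e.\ to the two scalar identities $\Re R(g)=H(1+|g|^2)^2$ (immediate from the definition of $R$) and $-\Im R(g)=\tfrac{1}{2}(1+|g|^2)^2\,\mathrm{tr}_t(\widetilde{T})$. The second splits into three polynomial identities in $g$, one per $\mu_j$, which I will check by expanding $(1+|g|^2)^2(1-\nu_j^2)$ using the inverse stereographic formulas $\nu_1=2\Re g/(1+|g|^2)$, $\nu_2=2\Im g/(1+|g|^2)$, $\nu_3=(1-|g|^2)/(1+|g|^2)$; for instance $(1+|g|^2)^2(1-\nu_1^2)=|1-g^2|^2$ follows from $|1-g^2|^2=(1-x^2+y^2)^2+4x^2y^2$ with $g=x+iy$.

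The step I expect to be the main source of trouble is the sign bookkeeping in the bivector/volume-element computation---getting the correct signs for $\omega_\mathcal{G}\cdot(e_j\wedge\nu)$ in the moving frame $(e_1,e_2,\nu)$ so that the constants defined in the statement come out as $+Ae_1+Be_2$ rather than with mixed signs---together with the stereographic polynomial identifications, which must be done carefully to align with the Meeks--Perez normalisation of the $H$-potential.
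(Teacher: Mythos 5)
Your proof is correct, and it takes a more structured route than the paper's. The paper's proof first rewrites $e_1\cdot\Gamma(e_1)+e_2\cdot\Gamma(e_2)=\mu_1\,p(e_1^o)\cdot e_2^o\cdot e_3^o+\mu_2\,p(e_2^o)\cdot e_3^o\cdot e_1^o+\mu_3\,p(e_3^o)\cdot e_1^o\cdot e_2^o$, where $p$ is the orthogonal projection onto the tangent plane, and then declares the rest ``a direct and long computation'' in the fixed basis $(e_1^o,e_2^o,e_3^o)$ using the stereographic formulas. Your organization --- dualizing $\Gamma(X)=-\omega_{\mathcal{G}}\cdot\widetilde{T}(X)$ through the central volume element, splitting $\sum_{i=1,2}e_i\cdot\widetilde{T}(e_i)$ by grade in the moving frame $(e_1,e_2,\nu)$, and invoking self-adjointness of $\widetilde{T}$ to kill the $e_1\wedge e_2$ component --- makes the tangential part $Ae_1+Be_2$ drop out for free and reduces the whole lemma to the two scalar identities $\Re e\, R(g)=H(1+|g|^2)^2$ and $-\Im m\, R(g)=\tfrac12(1+|g|^2)^2\sum_j\mu_j(1-\nu_j^2)$. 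The sign checks you flag as delicate do come out as you state: with the convention $v\cdot v=-|v|^2$ one has $\langle e_1,T(\nu)\rangle=-2B$, $\langle e_2,T(\nu)\rangle=2A$, $\omega_{\mathcal{G}}\cdot(e_1\cdot\nu)=e_2$ and $\omega_{\mathcal{G}}\cdot(e_2\cdot\nu)=-e_1$, which indeed produce $+Ae_1+Be_2$.

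One substantive remark your method surfaces: the third stereographic identity is $(1+|g|^2)^2(1-\nu_3^2)=(1+|g|^2)^2-(1-|g|^2)^2=4|g|^2$, not $4|g|^4$. So the lemma holds with the $H$-potential $R(g)=H(1+|g|^2)^2-\frac{i}{2}\left(\mu_1|1-g^2|^2+\mu_2|1+g^2|^2+4\mu_3|g|^2\right)$, which is the normalisation of Meeks--Mira--P\'erez--Ros; the exponent $4$ in the paper's displayed definition of $R$ appears to be a typo. With that correction your argument is complete.
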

\begin{proof}
For $i\in\{1,2,3\},$ let us denote by
$$p(e_i^o):=\langle e_i^o,e_1\rangle e_1+\langle e_i^o,e_2\rangle e_2$$
the orthogonal projection of the vector $e_i^o$ onto the plane generated by $e_1$ and $e_2.$ By (\ref{expr Gamma X eio}) we have
\begin{eqnarray*}
e_1\cdot\Gamma(e_1)+e_2\cdot\Gamma(e_2)&=&\mu_1\ p(e_1^o)\cdot e_2^o\cdot e_3^o+\mu_2\ p(e_2^o)\cdot e_3^o\cdot e_1^o+\mu_3\ p(e_3^o)\cdot e_1^o\cdot e_2^o.
\end{eqnarray*}
The proof is then a direct and long computation using that $p(e_i^o)=e_i^o-\langle e_i^o,\nu\rangle\nu$ together with the formulas
\begin{equation}\label{formulas nu g}
\nu_1=\frac{2\ \Re e\ g}{1+|g|^2},\hspace{1cm}\nu_2=\frac{2\ \Im m\ g}{1+|g|^2},\hspace{1cm}\nu_3=\frac{1-|g|^2}{1+|g|^2}.
\end{equation}
\end{proof}
We consider the Clifford map
\begin{eqnarray}
\mathcal{G}&\rightarrow&\HH(2)\label{app Clifford G}\\
x_1e_1^o+x_2e_2^o+x_3e_3^o&\mapsto&\left(\begin{array}{cc}ix_3+ j(x_1-ix_2)&0\\0&-ix_3-j(x_1-ix_2)\end{array}\right)\nonumber
\end{eqnarray}
which identifies $\mathcal{G}$ to the imaginary quaternions so that
\begin{equation}\label{ident basis}
e_1^o\simeq\left(\begin{array}{cc}
j&0\\
0&-j
\end{array}\right)\simeq j,\hspace{.5cm}
e_2^o\simeq\left(\begin{array}{cc}
-ji&0\\
0&ji
\end{array}\right)\simeq -ji,\hspace{.5cm}
e_3^o\simeq\left(\begin{array}{cc}
i&0\\
0&-i
\end{array}\right)\simeq i.
\end{equation}
It identifies $Cl(\mathcal{G})$ to the set of matrices
\begin{equation}\label{id ClG}
\left\{\left(\begin{array}{cc}
a&0\\
0&b
\end{array}\right),\hspace{.5cm} a,b\in\HH\right\}
\end{equation}
and $Spin(\mathcal{G})$ to the group of unit quaternions
$$\left\{\left(\begin{array}{cc}
a&0\\
0&a
\end{array}\right),\hspace{.5cm} a\in\HH,\ |a|=1\right\}\simeq \{a\in\HH,\ |a|=1\}.$$ 
We choose a conformal parameter $z=x+iy$ of the surface, and denote by $\mu$ the real function such that the metric is $\mu^2(dx^2+dy^2).$ In a spinorial frame above the orthonormal frame $e_1=\frac{1}{\mu}\partial_x,$ $e_2=\frac{1}{\mu}\partial_y,$ the spinor field $\varphi$ is represented by $[\varphi]=z_1+jz_2$ where $z_1,z_2\in\C$ are such that $|z_1|^2+|z_2|^2=1.$ 
\begin{lem}
The Dirac equation (\ref{dirac equation}) is equivalent to the system
\begin{eqnarray}
\frac{1}{\sqrt{\mu}}\partial_{\overline{z}}\left(\sqrt{\mu}\ \overline{z_1}\right)&=&i\frac{\mu}{2}\frac{\overline{R(g)}}{(1+|g|^2)^2}\overline{z_2}+\frac{\mu}{2}(A+iB)\overline{z_1}\label{eqn dirac coor 1}\\
\frac{1}{\sqrt{\mu}}\partial_{\overline{z}}\left(\sqrt{\mu}\ z_2\right)&=&-i\frac{\mu}{2}\frac{\overline{R(g)}}{(1+|g|^2)^2}z_1+\frac{\mu}{2}(A+iB)z_2.\label{eqn dirac coor 2}
\end{eqnarray}
Moreover, the $\mathcal{G}-$valued 1-form $\xi$ in Theorem \ref{thm main result} is
\begin{eqnarray}
\xi(X)&=&i\left\{2x_1\ \Im m(z_1\overline{z_2})-2x_2\ \Re e(z_1\overline{z_2})+x_3\left(|z_1|^2-|z_2|^2\right)\right\}\label{expr xi coord}\\
&&+j\left\{x_1(z_1^2+z_2^2)-ix_2(z_1^2-z_2^2)-2ix_3z_1z_2\right\}\nonumber
\end{eqnarray}
for all $X=x_1e_1+x_2e_2+x_3\nu\in TM\oplus E.$
\end{lem}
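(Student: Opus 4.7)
The proof is by direct computation in a local spinorial frame $\tilde s$ above $(e_1,e_2,\nu)$, translating both the Dirac equation and the explicit formula for $\xi$ into quaternionic identities via (\ref{app Clifford G})--(\ref{ident basis}). The first step is to record the Clifford action on $[\varphi]=z_1+jz_2\in\HH$: since $e_1\simeq j$, $e_2\simeq k=ij$, $\nu\simeq i$ and $e_1\cdot e_2=jk=i$, the key rule $jw=\bar w\,j$ for $w\in\C$ gives the explicit $\C$-linear maps
\begin{equation*}
jq=-z_2+jz_1,\qquad kq=-iz_2+j(-iz_1),\qquad iq=iz_1+j(-iz_2).
\end{equation*}
The spin connection on $\Sigma$ reduces to $\nabla_{e_i}\varphi=\mu^{-1}\partial_{x_i}[\varphi]+\frac{1}{2\mu}\omega_{12}(\partial_{x_i})\,e_1\cdot e_2\cdot[\varphi]$, with $\omega_{12}(\partial_x)=-\partial_y\log\mu$ and $\omega_{12}(\partial_y)=\partial_x\log\mu$.

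For the Dirac equation I compute $D\varphi=e_1\cdot\nabla_{e_1}\varphi+e_2\cdot\nabla_{e_2}\varphi$ component by component. Grouping the eight resulting terms and recognising the Wirtinger operators $2\partial_z=\partial_x-i\partial_y$, $2\partial_{\bar z}=\partial_x+i\partial_y$ yields the compact identities
\begin{equation*}
(D\varphi)_1=-\frac{2}{\mu^{3/2}}\partial_{\bar z}\bigl(\sqrt\mu\,z_2\bigr),\qquad (D\varphi)_2=\frac{2}{\mu^{3/2}}\partial_z\bigl(\sqrt\mu\,z_1\bigr),
\end{equation*}
where $[\varphi]=(D\varphi)_1+j(D\varphi)_2$ is the decomposition in $\HH=\C\oplus j\C$. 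For the right-hand side I apply Lemma \ref{Gamma CMC} to the Dirac equation (\ref{dirac equation}); since $e_1\cdot e_2\simeq i$, the bivector $\Re e\,R(g)-\Im m\,R(g)\cdot e_1\cdot e_2$ becomes the complex scalar $\overline{R(g)}$ acting by left multiplication. Expanding $\bigl((1+|g|^2)^{-2}\overline{R(g)}\,\nu+Ae_1+Be_2\bigr)\cdot q$ in components via the formulas of the first step and equating with the two expressions above gives (\ref{eqn dirac coor 2}) directly, while taking the complex conjugate of the identity arising from $(D\varphi)_2$ produces (\ref{eqn dirac coor 1}).

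For the formula (\ref{expr xi coord}), I use $\xi(X)=\tau[\varphi]\cdot[X]\cdot[\varphi]=\bar q\cdot[X]\cdot q$, where $\bar q=\bar z_1-jz_2$ and, by (\ref{app Clifford G}), $[X]=ix_3+j(x_1-ix_2)$. Setting $W:=x_1-ix_2$, a two-step quaternionic expansion using only $jw=\bar w\,j$ gives first
\begin{equation*}
[X]\cdot q=(ix_3 z_1-\bar W\,z_2)+j\,(Wz_1-ix_3\,z_2),
\end{equation*}
and then $\bar q\cdot[X]\cdot q=(\bar z_1 P+\bar z_2 Q)+j\,(z_1 Q-z_2 P)$ with $P,Q$ the two components just computed. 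The $i$-part yields the announced scalar component after recognising $W\bar z_2 z_1-\bar W\bar z_1 z_2=2i\,\Im m(Wz_1\bar z_2)$, and the $j$-part matches upon expanding $W$ and $\bar W$ in terms of $x_1,x_2$.

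The main technical obstacle throughout is the interplay between the complex structure (used for $\partial_z,\partial_{\bar z}$) and the quaternionic non-commutativity: the rule $jw=\bar w\,j$ means that $j$ and $\partial_{\bar z}$ fail to commute, so one must track with care on which side of $j$ each complex scalar sits, in particular the $i$ coming from $\partial_y$ in the Wirtinger operators, and the $i$ arising from $e_1\cdot e_2\simeq i$ in the spin connection and in the bivector part of $\vec H+\gamma$. Once this bookkeeping is done uniformly, both claims reduce to the matching of real and complex parts of each component.
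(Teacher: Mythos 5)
Your proof is correct and is, in substance, the same direct quaternionic computation as the paper's: both sides of (\ref{dirac equation}) are expanded in a conformal coordinate using Lemma \ref{Gamma CMC} for the right-hand side, and (\ref{expr xi coord}) comes from expanding $\tau[\varphi]\cdot[X]\cdot[\varphi]$. The one organizational difference is that you stay in $Cl(\mathcal{G})\simeq\HH$ and act directly on $[\varphi]=z_1+jz_2$, whereas the paper first transfers the equation to $\Sigma M$ via $\psi\mapsto\psi^*$, identifies $Cl_2\simeq\HH$ by (\ref{app Clifford Cl2}), and must then work out that $[\psi]=z_1-jiz_2$; your route avoids that extra identification and is, if anything, cleaner. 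Your intermediate formulas check out: $(D\varphi)_1=-2\mu^{-3/2}\partial_{\bar z}(\sqrt{\mu}\,z_2)$ and $(D\varphi)_2=2\mu^{-3/2}\partial_{z}(\sqrt{\mu}\,z_1)$, matched against the right-hand side $i\overline{R(g)}(1+|g|^2)^{-2}q+Ajq+Bkq$ (note $(\Re e\,R-\Im m\,R\ e_1\cdot e_2)\cdot\nu\simeq i\overline{R}$), give (\ref{eqn dirac coor 2}) directly and (\ref{eqn dirac coor 1}) after conjugation, exactly as you describe. One further point in your favour: your $\tau[\varphi]=\overline{z_1}-jz_2$ is the correct quaternionic conjugate (as one sees from $\tau[\varphi]\,[\varphi]=1$), whereas the paper's displayed $\overline{z_1}-j\overline{z_2}$ is a typo; with the correct conjugate your expansion $(\overline{z_1}P+\overline{z_2}Q)+j(z_1Q-z_2P)$ does reproduce (\ref{expr xi coord}).
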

\begin{proof}
We use here the identification $\psi\in\Gamma(\Sigma M)\mapsto \psi^*\in\Gamma(\Sigma)$ satisfying the properties (\ref{properties ident psi phi}): according to Lemma \ref{Gamma CMC}, the spinor field $\psi\in\Gamma(\Sigma M)$ such that $\psi^*=\varphi$ is solution of
\begin{equation}\label{dirac equation psi MP}
D\psi=\frac{1}{(1+|g|^2)^2}\left(\Re e\ R(g)-\Im m\ R(g)\ e_1\cdot e_2\right)\cdot\psi+(Ae_1+Be_2)\cdot\psi.
\end{equation}
We identify $Cl_2$ to $\HH$ using the Clifford map
\begin{eqnarray}
\R^2&\rightarrow&\HH\label{app Clifford Cl2}\\
(x_1,x_2)&\mapsto& j(x_1-ix_2)\nonumber
\end{eqnarray}
so that, in the fixed spinorial frame above $e_1=\frac{1}{\mu}\partial_x,$ $e_2=\frac{1}{\mu}\partial_y,$
$$[e_1]=j,\hspace{1cm}[e_2]=-ji,\hspace{1cm}[e_1\cdot e_2]=i.$$ 
Using moreover that
$$[\nabla_{\partial x}\psi]=\partial_x[\psi]-\frac{i}{2\mu}\partial_y\mu\ [\psi]\hspace{1cm}[\nabla_{\partial y}\psi]=\partial_y[\psi]+\frac{i}{2\mu}\partial_x\mu\ [\psi]$$
(by (\ref{nabla phi rho}), and the computation of the Christoffel symbols), the left-hand side of (\ref{dirac equation psi MP}) is
$$[D\psi]=\frac{1}{\mu}j\left\{\partial_{x}[\psi]-\frac{i}{2\mu}\partial_y\mu\ [\psi]\right\}-\frac{1}{\mu}ji\left\{\partial_{y}[\psi]+\frac{i}{2\mu}\partial_x\mu\ [\psi]\right\}$$
whereas the right-hand side is
$$\left(\frac{\overline{R(g)}}{(1+|g|^2)^2}+j(A-iB)\right)[\psi].$$
We finally need to precise the identification $\psi\mapsto\psi^*:$  in spinorial frames above $e_1,e_2$ and $e_1,e_2,\nu,$ since the second property in (\ref{properties ident psi phi}) is required and using the Clifford maps (\ref{app Clifford G}) and (\ref{app Clifford Cl2}), it is not difficult to see that the map $\psi\mapsto\psi^*$ corresponds to the map 
$$u+jv\mapsto \left(\begin{array}{cc}u+jiv&0\\0&u+jiv\end{array}\right);$$ 
$\psi$ is thus represented by the quaternion $[\psi]=z_1-jiz_2.$ Direct computations then give the system (\ref{eqn dirac coor 1})-(\ref{eqn dirac coor 2}).

Expression (\ref{expr xi coord}) also follows from a direct computation: we have, in $Cl_3,$ 
\begin{eqnarray*}
\xi(X)&=&\tau[\varphi][X][\varphi]\\
&\simeq&(\overline{z_1}-j\overline{z_2})(ix_3+j(x_1-ix_2))(z_1+jz_2),
\end{eqnarray*}
which easily gives the result.
\end{proof}
We set
\begin{equation}
g=i\frac{\overline{z_2}}{z_1},\hspace{1cm}f=-2\mu z_1^2.
\end{equation}
The function $g$ is the left invariant Gauss map of the surface, stereografically projected with respect to the south pole of $S^2,$ since 
$$\nu=i\left(|z_1|^2-|z_2|^2\right)-2jiz_1z_2$$
is a unit vector normal to the immersion $(x_1=x_2=0$ and $x_3=1$ in (\ref{expr xi coord})) and
$$\frac{\nu_1+i\nu_2}{1+\nu_3}=\frac{2i\ \overline{z_1}\ \overline{z_2}}{1+|z_1|^2-|z_2|^2}=\frac{2i\ \overline{z_1}\ \overline{z_2}}{2|z_1|^2}=i\frac{\overline{z_2}}{z_1}.$$
Direct computations then show that equations (\ref{eqn dirac coor 1})-(\ref{eqn dirac coor 2}) are equivalent to
\begin{equation}\label{equiv dirac 1}
f=4\frac{\partial_zg}{R(g)}
\end{equation}
and
\begin{equation}\label{equiv dirac 2}
\frac{\partial_{\overline{z}}f}{f}=-\frac{2}{1+|g|^2}\partial_{\overline{z}}\overline{g}\ g+\mu (A+iB),
\end{equation}
and that (\ref{expr xi coord}) reads
\begin{equation}\label{xi f g}
\xi=\Re e\left(\ \frac{1}{2}f(\overline{g}^2-1)dz,\ \frac{i}{2}f(\overline{g}^2+1)dz,\ f\overline{g}dz\right)
\end{equation}
in $(e_1^o,e_2^o,e_3^o)$ (recall (\ref{ident basis})). This last formula is the Weierstrass-type representation given in \cite[Theorem 3.15]{MP}. Using that
$$A=\left\langle \xi\left(\frac{\partial_y}{\mu}\right),T(\nu)\right\rangle\hspace{.5cm}\mbox{and}\hspace{.5cm}B=-\left\langle \xi\left(\frac{\partial_x}{\mu}\right),T(\nu)\right\rangle$$
(Lemma \ref{Gamma CMC}) together with (\ref{xi f g}) and (\ref{def T nu}) we get that
\begin{equation}\label{expr A+iB}
A+iB=-\frac{i}{4\mu}\overline{f}\left(\mu_1\nu_1(g^2-1)-i\mu_2\nu_2(g^2+1)+2\mu_3\nu_3g\right).
\end{equation}
Differentiating (\ref{equiv dirac 1}) with respect to $\overline{z}$ and using (\ref{equiv dirac 2}) together with (\ref{expr A+iB}) and (\ref{formulas nu g}) we see by a further computation that $g$ satisfies
\begin{equation}\label{eqn g}
g_{z\overline{z}}=\frac{R_g}{R}g_zg_{\overline{z}}+\left(\frac{R_{\overline{g}}}{R}-\frac{\overline{R_g}}{\overline{R}}\right)|g_z|^2,
\end{equation}
which is the structure equation for the left invariant Gauss map in \cite[Theorem 3.15]{MP}.
\appendix

\section{Skew-symmetric operators and bivectors}
We consider $\R^n$ endowed with its canonical scalar product. A skew-symmetric operator $u:\R^n\rightarrow\R^n$ naturally identifies to a bivector $\underline{u}\in\Lambda^2\R^n,$ which may in turn be regarded as belonging to the Clifford algebra $Cl_n(\R).$ We precise here the relations between the Clifford product in $Cl_n(\R)$ and the composition of endomorphisms.  If $a$ and $b$ belong to the Clifford algebra $Cl_n(\R),$ we set 
$$[a,b]=\frac{1}{2}\left(a\cdot b-b\cdot a\right),$$ 
where the dot $\cdot$ is the Clifford product. We denote by $(e_1,\ldots,e_n)$ the canonical basis of $\R^n.$
\begin{lem} \label{lem1 ap1}
Let $u:\R^n\rightarrow\R^n$ be a skew-symmetric operator. Then the bivector 
\begin{equation}\label{biv rep u}
\underline{u}=\frac{1}{2}\sum_{j=1}^ne_j\cdot u(e_j)\hspace{.3cm}\in\ \Lambda^2\R^n\subset Cl_n(\R)
\end{equation}
represents $u,$ and, for all $\xi\in\R^n,$
$$[\underline{u},\xi]=u(\xi).$$
In the paper, and for sake of simplicity, we will use the same letter $u$ to denote $\underline{u}.$
\end{lem}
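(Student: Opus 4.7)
The proof plan rests on a single Clifford-algebra commutator identity: for $a,b,x\in\R^n\subset Cl_n(\R)$,
\begin{equation*}
[a\cdot b,x]\ =\ \langle a,x\rangle\, b-\langle b,x\rangle\, a.
\end{equation*}
I would establish this first by twice applying the defining relation $y\cdot z+z\cdot y=-2\langle y,z\rangle$ to move $x$ past $a$ and then past $b$:
\begin{equation*}
x\cdot a\cdot b\ =\ -a\cdot x\cdot b-2\langle a,x\rangle b\ =\ a\cdot b\cdot x+2\langle b,x\rangle a-2\langle a,x\rangle b,
\end{equation*}
whence $\tfrac{1}{2}(a\cdot b\cdot x-x\cdot a\cdot b)=\langle a,x\rangle b-\langle b,x\rangle a$.

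Next, I would verify that $\underline{u}$ actually lies in $\Lambda^2\R^n$. Expanding $u(e_j)=\sum_i u_{ij}e_i$ with $u_{ij}=-u_{ji}$ gives $\underline{u}=\tfrac{1}{2}\sum_{i,j}u_{ij}\,e_j\cdot e_i$; the diagonal terms drop out because $u_{ii}=0$, and each off-diagonal $e_j\cdot e_i$ is a bivector. Pairing $(i,j)$ with $(j,i)$ and using $e_j\cdot e_i=-e_i\cdot e_j$ for $i\neq j$ yields the standard expression $\underline{u}=\sum_{i<j}u_{ij}\,e_j\cdot e_i$ in $\Lambda^2\R^n$.

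Finally, I would apply the commutator identity termwise to $\xi\in\R^n$ (by linearity it is enough to treat $\xi=e_k$, but it reads cleanly for arbitrary $\xi$):
\begin{equation*}
[\underline{u},\xi]\ =\ \tfrac{1}{2}\sum_{j=1}^n\bigl[e_j\cdot u(e_j),\xi\bigr]\ =\ \tfrac{1}{2}\sum_{j=1}^n\Bigl(\langle e_j,\xi\rangle\,u(e_j)-\langle u(e_j),\xi\rangle\,e_j\Bigr).
\end{equation*}
The first sum equals $\tfrac{1}{2}u(\xi)$ by linear expansion of $\xi=\sum_j\langle e_j,\xi\rangle e_j$. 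For the second sum, the skew-symmetry of $u$ gives $\langle u(e_j),\xi\rangle=-\langle e_j,u(\xi)\rangle$, so $-\tfrac{1}{2}\sum_j\langle u(e_j),\xi\rangle e_j=\tfrac{1}{2}\sum_j\langle e_j,u(\xi)\rangle e_j=\tfrac{1}{2}u(\xi)$. Adding these two halves yields $[\underline{u},\xi]=u(\xi)$, which also confirms that the bivector $\underline{u}$ represents $u$ under the standard identification $\Lambda^2\R^n\simeq\mathfrak{so}(n)$ via the commutator action. There is no real obstacle here beyond bookkeeping signs in the Clifford relation; the whole argument is a two-line calculation once the commutator identity is in hand.
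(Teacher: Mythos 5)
Your proof is correct. It differs from the paper's in how linearity is exploited: the paper linearizes in $u$, reducing to the elementary skew-symmetric maps corresponding to $e_i\wedge e_j$ and checking $[\underline{u},e_k]=u(e_k)$ on basis vectors, whereas you keep $u$ general, prove the vector-level commutator identity $[a\cdot b,x]=\langle a,x\rangle b-\langle b,x\rangle a$, and then sum over $j$, invoking the skew-symmetry of $u$ explicitly to combine the two halves into $u(\xi)$. Both routes are elementary and of comparable length. Yours has the advantage that the commutator identity is reusable (it is essentially what is needed again in Lemma \ref{lem3 ap1}, where the operator is no longer skew-symmetric on all of $\R^n$ and the two halves of your computation no longer coincide), and your explicit check that $\underline{u}$ lands in $\Lambda^2\R^n$ is a detail the paper leaves implicit; the paper's reduction, on the other hand, makes it immediate that the identification is the standard one $e_i\wedge e_j\leftrightarrow e_i\cdot e_j$.
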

\begin{proof}
For $i<j,$ we consider  the linear map 
$$u:\hspace{.5cm}e_i\mapsto e_j,\hspace{.5cm}e_j\mapsto -e_i,\hspace{.5cm} e_k\mapsto 0\hspace{.3cm}\mbox{if}\hspace{.3cm} k\neq i,j;$$
it is skew-symmetric and corresponds to the bivector $e_i\wedge e_j\in\Lambda^2\R^n;$ it is thus naturally represented by  $\underline{u}=e_i\cdot e_j=\frac{1}{2}\left(e_i\cdot e_j-e_j\cdot e_i\right),$ which is (\ref{biv rep u}). We then compute, for $k=1,\ldots,n,$
$$[\underline{u},e_k]=\frac{1}{2}\left(e_i\cdot e_j\cdot e_k-e_k\cdot e_i\cdot e_j\right)$$
and easily get
$$[\underline{u},e_k]=e_j\hspace{.3cm}\mbox{if}\hspace{.3cm}k=i,\hspace{.3cm}-e_i\hspace{.3cm}\mbox{if}\hspace{.3cm}k=j,\hspace{.3cm}0\hspace{.3cm}\mbox{if}\hspace{.3cm}k\neq i,j.$$
The result follows by linearity.
\end{proof}
\begin{lem}\label{lem2 ap1}
Let $u:\R^n\rightarrow\R^n$ and $v:\R^n\rightarrow\R^n$ be two skew-symmetric operators, represented in $Cl_n(\R)$ by 
$$u=\frac{1}{2}\sum_{j=1}^ne_j\cdot u(e_j)\hspace{.5cm}\mbox{and}\hspace{.5cm}v=\frac{1}{2}\sum_{j=1}^ne_j\cdot v(e_j)$$
respectively. Then $[u,v]\in\Lambda^2\R^n\subset Cl_n(\R)$ represents $u\circ v-v\circ u.$
\end{lem}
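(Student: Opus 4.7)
The plan is to combine the representation formula of Lemma \ref{lem1 ap1} with the Jacobi identity for the bracket $[\cdot,\cdot]$ defined on $Cl_n(\R)$. The key step will be an application of Jacobi showing that the bivector $[u,v]$ acts on vectors exactly as the operator commutator $u\circ v - v\circ u$ does, after which Lemma \ref{lem1 ap1} finishes the identification.

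First, I would verify that the bracket $[a,b]=\frac{1}{2}(a\cdot b-b\cdot a)$ satisfies
$$[[a,b],c] = [a,[b,c]] - [b,[a,c]]$$
for all $a,b,c\in Cl_n(\R)$; this is a direct expansion from the definition, as the six cubic Clifford monomials appearing on each side match pairwise.

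Second, I would check that the Clifford bracket of two bivectors is again a bivector, i.e.\ $[u,v]\in\Lambda^2\R^n$. By linearity one reduces to the case $u=e_i\cdot e_j$ and $v=e_k\cdot e_l$ with $i<j$, $k<l$: a short case analysis on $|\{i,j\}\cap\{k,l\}|\in\{0,1,2\}$ shows that $[u,v]$ is either $0$ or a single basis bivector. Equivalently, the product of two bivectors in $Cl_n(\R)$ lies a priori in $\Lambda^0\oplus\Lambda^2\oplus\Lambda^4\R^n$, and the $\Lambda^0$ and $\Lambda^4$ components of $u\cdot v$ are symmetric in $u$ and $v$, hence killed by the antisymmetric combination.

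With these two ingredients the result follows in one line: applying Jacobi with $c=\xi\in\R^n$ and then using Lemma \ref{lem1 ap1} twice,
$$[[u,v],\xi]=[u,[v,\xi]]-[v,[u,\xi]]=[u,v(\xi)]-[v,u(\xi)]=u(v(\xi))-v(u(\xi))=(u\circ v-v\circ u)(\xi).$$
Since $[u,v]\in\Lambda^2\R^n$ by the second step, Lemma \ref{lem1 ap1} applied to $[u,v]$ identifies it with the skew-symmetric operator $\xi\mapsto[[u,v],\xi]$, which by the display above is $u\circ v-v\circ u$. The only minor obstacle is the stability of $\Lambda^2\R^n$ under the Clifford bracket (the second step); everything else is a formal consequence of Jacobi and the preceding lemma.
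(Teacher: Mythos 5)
Your proof is correct and follows essentially the same route as the paper: the Jacobi identity for the bracket $[a,b]=\frac{1}{2}(a\cdot b-b\cdot a)$ combined with a double application of Lemma \ref{lem1 ap1}. Your extra verification that $[u,v]$ indeed lies in $\Lambda^2\R^n$ (via the symmetry of the $\Lambda^0$ and $\Lambda^4$ components of $u\cdot v$) is a point the paper leaves implicit, and is a welcome addition.
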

\begin{proof}
For $\xi\in\R^n,$ the Jacobi equation yields
$$[[u,v],\xi]=[u,[v,\xi]]-[v,[u,\xi]].$$
Thus, using Lemma \ref{lem1 ap1} repeatedly, $[u,v]$ represents the map 
\begin{eqnarray*}
\xi&\mapsto&[[u,v],\xi]
\begin{array}[t]{cc}=&[u,[v,\xi]]-[v,[u,\xi]]\\
=&[u,v(\xi)]-[v,u(\xi)]\\
=&(u\circ v-v\circ u)(\xi),
\end{array}
\end{eqnarray*}
and the result follows.
\end{proof}

We now assume that $\R^n=\R^p\oplus\R^q,$ $p+q=n.$
\begin{lem}\label{lem3 ap1}
Let us consider a linear map $u:\R^p\rightarrow\R^q$ and its adjoint $u^*:\R^q\rightarrow\R^p.$ Then the bivector
$$\underline{u}=\sum_{j=1}^pe_j\cdot u(e_j)\hspace{.3cm}\in\ \Lambda^2\R^n\subset Cl_n(\R)$$
represents 
$$\left(\begin{array}{cc}0&-u^*\\u&0\end{array}\right):\hspace{.5cm}\R^p\oplus\R^q\rightarrow\R^p\oplus\R^q,$$
we have
\begin{equation}\label{biv u u*}
\underline{u}=\frac{1}{2}\left(\sum_{j=1}^pe_j\cdot u(e_j)+\sum_{j=p+1}^ne_j\cdot (-u^*(e_j))\right)
\end{equation}
and, for all $\xi=\xi_p+\xi_q\in\R^n,$
$$[\underline{u},\xi]=u(\xi_p)-u^*(\xi_q).$$
As above, we will simply denote $\underline{u}$ by $u.$
\end{lem}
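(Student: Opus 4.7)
My plan is to reduce the statement to Lemma \ref{lem1 ap1} applied to the block operator $U = \begin{pmatrix} 0 & -u^{*}\\ u & 0\end{pmatrix}$ on $\R^{p}\oplus\R^{q}$. First I would check that $U$ is skew-symmetric: in the block decomposition this is immediate, since the adjoint of $U$ is $\begin{pmatrix} 0 & u^{*}\\ -u & 0\end{pmatrix} = -U$. Hence Lemma \ref{lem1 ap1} guarantees that $U$ is represented by the bivector
$$\underline{U} \;=\; \frac{1}{2}\sum_{k=1}^{n} e_{k}\cdot U(e_{k}),$$
and that $[\underline{U},\xi] = U(\xi)$ for all $\xi\in\R^{n}$. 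Splitting the sum according to whether $k\leq p$ or $k>p$ and using $U(e_{j})=u(e_{j})$ for $j\leq p$ and $U(e_{k})=-u^{*}(e_{k})$ for $k>p$, this immediately yields formula (\ref{biv u u*}).

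Next I would reconcile this symmetric-looking expression with the asymmetric form $\underline{u} = \sum_{j=1}^{p} e_{j}\cdot u(e_{j})$ given in the statement. The key identity to verify is
$$\sum_{j=1}^{p} e_{j}\cdot u(e_{j}) \;=\; -\sum_{k=p+1}^{n} e_{k}\cdot u^{*}(e_{k}).$$
Writing $u_{jk} = \langle u(e_{j}),e_{k}\rangle$ for $j\leq p<k$, we have $u(e_{j}) = \sum_{k>p} u_{jk}e_{k}$ and $u^{*}(e_{k}) = \sum_{j\leq p} u_{jk}e_{j}$. Substituting, the left-hand side becomes $\sum_{j\leq p,\,k>p} u_{jk}\,e_{j}\cdot e_{k}$, while the right-hand side becomes $-\sum_{j\leq p,\,k>p} u_{jk}\,e_{k}\cdot e_{j}$; since $j\neq k$, the Clifford anticommutation $e_{k}\cdot e_{j} = -e_{j}\cdot e_{k}$ makes the two expressions agree. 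This shows $\sum_{j=1}^{p} e_{j}\cdot u(e_{j}) = \underline{U}$, so the single-sum formula represents the same bivector.

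Finally, the commutator statement is a direct transcription: applying $[\underline{U},\cdot]$ to $\xi = \xi_{p}+\xi_{q}$ and using $[\underline{U},\xi] = U(\xi)$ from Lemma \ref{lem1 ap1}, I get $U(\xi_{p}+\xi_{q}) = u(\xi_{p}) - u^{*}(\xi_{q})$, which is the claimed identity. I expect no real obstacle here: the only nontrivial step is the bookkeeping identification of the two expressions for $\underline{u}$ via anticommutation, which is routine once the block structure of $U$ is made explicit.
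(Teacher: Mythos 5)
Your proof is correct, and it is organized differently from the paper's. The paper works ``from the bivector to the operator'': it takes $\underline{u}=\sum_{j\le p}e_j\cdot u(e_j)$ as given and directly computes $[\underline{u},\xi]$ by Clifford anticommutation, separately for $\xi\in\R^p$ (obtaining $u(\xi)$) and for $\xi\in\R^q$ (obtaining $-u^*(\xi)$), and only at the end rewrites $\underline{u}$ in the symmetric form (\ref{biv u u*}) by the same matrix-entry manipulation you use. You instead work ``from the operator to the bivector'': you first check that the block operator $U=\left(\begin{smallmatrix}0&-u^*\\u&0\end{smallmatrix}\right)$ is skew-symmetric, invoke Lemma \ref{lem1 ap1} to produce its canonical bivector $\frac{1}{2}\sum_k e_k\cdot U(e_k)$ together with the commutator identity, and then reduce everything to the single identity $\sum_{j\le p}e_j\cdot u(e_j)=-\sum_{k>p}e_k\cdot u^*(e_k)$, which you verify correctly via $e_k\cdot e_j=-e_j\cdot e_k$. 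Your route buys economy: the two Clifford commutator computations of the paper are replaced by one application of Lemma \ref{lem1 ap1} plus one bookkeeping identity, and the symmetric formula (\ref{biv u u*}) comes out automatically rather than as an afterthought. The paper's route has the mild advantage of being self-contained at the level of explicit anticommutation relations, which it reuses in the same style elsewhere in the appendix. Both arguments are complete; there is no gap in yours.
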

\begin{proof} In view of Lemma \ref{lem1 ap1}, $\underline{u}$ represents the linear map $\xi\mapsto[\underline{u},\xi].$ We compute, for $\xi\in\R^p,$
\begin{eqnarray*}
[\underline{u},\xi]&=&\frac{1}{2}\left(\sum_{j=1}^pe_j\cdot u(e_j)\cdot\xi-\xi\cdot\sum_{j=1}^pe_j\cdot u(e_j)\right)\\
&=&-\frac{1}{2}\sum_{j=1}^p(e_j\cdot\xi+\xi\cdot e_j)\cdot u(e_j)\\
&=&\sum_{j=1}^p\langle \xi,e_j\rangle\ u(e_j)\\
&=&u(\xi),
\end{eqnarray*}
and, for $\xi\in\R^q,$
\begin{eqnarray*}
[\underline{u},\xi]&=&\frac{1}{2}\left(\sum_{j=1}^pe_j\cdot u(e_j)\cdot\xi-\xi\cdot\sum_{j=1}^pe_j\cdot u(e_j)\right)\\
&=&\frac{1}{2}\sum_{j=1}^pe_j\cdot\left(u(e_j)\cdot\xi+\xi\cdot u(e_j)\right)\\
&=&-\sum_{j=1}^pe_j\ \langle u(e_j),\xi\rangle\\
&=&-\sum_{j=1}^pe_j\ \langle e_j,u^*(\xi)\rangle\\
&=&-u^*(\xi).
\end{eqnarray*}
Finally, 
$$\underline{u}=\sum_{j=1}^pe_j\cdot u(e_j)=\frac{1}{2}\left(\sum_{j=1}^pe_j\cdot u(e_j)+\sum_{j=1}^p-u(e_j)\cdot e_j\right)$$
with
\begin{eqnarray*}
\sum_{j=1}^p-u(e_j)\cdot e_j&=&-\sum_{i=p+1}^{p+q}\sum_{j=1}^p\langle u(e_j),e_i\rangle\ e_i\cdot e_j\\
&=&\sum_{i=p+1}^{p+q}e_i\cdot\left(-\sum_{j=1}^p\langle e_j,u^*(e_i)\rangle\ e_j\right)\\
&=&\sum_{i=p+1}^{p+q}e_i\cdot (-u^*(e_i)),
\end{eqnarray*}
which gives (\ref{biv u u*}).
\end{proof}
\begin{lem}\label{lem4 ap1}
Let us consider two linear maps $u:\R^p\rightarrow\R^q$ and $v:\R^n\rightarrow\R^n,$ with $v$ skew-symmetric, and the associated bivectors
$$u=\sum_{j=1}^pe_j\cdot u(e_j),\hspace{1cm}v= \frac{1}{2}\sum_{j=1}^ne_j\cdot v(e_j).$$ 
Then $[u,v]\in\Lambda^2\R^n$ represents the map
$$\xi=\xi_p+\xi_q\mapsto -u^*(v(\xi)_q)+v(u^*(\xi_q))+u(v(\xi)_p)-v(u(\xi_p)),$$
where the sub-indices $p$ and $q$ mean that we take the components of the vectors in $\R^p$ and $\R^q$ respectively. In view of Lemma \ref{lem1 ap1}, this may also be written in the form
$$\left[[u,v],\xi\right]=-u^*(v(\xi)_q)+v(u^*(\xi_q))+u(v(\xi)_p)-v(u(\xi_p))$$
for all $\xi\in\R^n.$ 
\end{lem}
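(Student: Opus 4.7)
My plan is to derive the formula from the Jacobi identity for the Clifford commutator, using Lemmas \ref{lem1 ap1} and \ref{lem3 ap1} to interpret the inner brackets as the claimed geometric operations.

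First I would note that $[u,v]\in\Lambda^2\R^n$: the commutator of two bivectors in the Clifford algebra is again a bivector (this is the standard fact that $\Lambda^2\R^n$ is a Lie subalgebra of $(Cl_n(\R),[.,.])$ isomorphic to $\mathfrak{so}(n)$). Therefore, by Lemma \ref{lem1 ap1}, to identify the endomorphism that $[u,v]$ represents it suffices to compute $[[u,v],\xi]$ for an arbitrary $\xi\in\R^n$. The bracket $[a,b]=\frac{1}{2}(a\cdot b-b\cdot a)$ is just the usual associative commutator rescaled by $\frac{1}{2}$, so it satisfies the Jacobi identity; in particular
$$[[u,v],\xi]=[u,[v,\xi]]-[v,[u,\xi]].$$

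Next I would unfold each of the two terms on the right using the previous lemmas. By Lemma \ref{lem1 ap1} applied to the skew-symmetric operator $v$, we have $[v,\xi]=v(\xi)\in\R^n$; writing $v(\xi)=v(\xi)_p+v(\xi)_q$ and applying Lemma \ref{lem3 ap1} to the bivector $u$, we obtain
$$[u,[v,\xi]]\ =\ u(v(\xi)_p)-u^*(v(\xi)_q).$$
Similarly, splitting $\xi=\xi_p+\xi_q$ and using Lemma \ref{lem3 ap1} first gives $[u,\xi]=u(\xi_p)-u^*(\xi_q)\in\R^n$; then applying Lemma \ref{lem1 ap1} to $v$ yields
$$[v,[u,\xi]]\ =\ v(u(\xi_p))-v(u^*(\xi_q)).$$
Subtracting these two identities produces exactly
$$[[u,v],\xi]\ =\ -u^*(v(\xi)_q)+v(u^*(\xi_q))+u(v(\xi)_p)-v(u(\xi_p)),$$
which is the formula in the statement.

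There is essentially no obstacle here: the proof is a one-line application of the Jacobi identity, with the previous two lemmas providing the dictionary between the Clifford bracket and the linear-algebraic operations $u$, $u^*$, $v$. The only point that deserves a sentence of explanation is the need to split $\xi$ and $v(\xi)$ into their $\R^p$ and $\R^q$ components when invoking Lemma \ref{lem3 ap1}, since the action of the bivector $u$ on a vector has different forms on the two factors.
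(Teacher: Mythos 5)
Your proof is correct and is essentially the paper's argument: the paper routes the same computation through Lemma \ref{lem2 ap1} (whose proof is precisely the Jacobi identity you invoke), viewing $u$ as the skew-symmetric block operator $\left(\begin{smallmatrix}0&-u^*\\u&0\end{smallmatrix}\right)$ via Lemma \ref{lem3 ap1} and then forming the commutator of endomorphisms, whereas you unfold the Jacobi identity directly and evaluate the inner brackets with Lemmas \ref{lem1 ap1} and \ref{lem3 ap1}. The two computations are identical in substance, and your handling of the $\R^p$/$\R^q$ splitting is exactly what is needed.
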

\begin{proof}
From Lemmas \ref{lem2 ap1} and \ref{lem3 ap1}, the bivector $[u,v]\in\Lambda^2\R^n$ represents 
$$\left(\begin{array}{cc}0&-u^*\\u&0\end{array}\right)\circ v-v\circ \left(\begin{array}{cc}0&-u^*\\u&0\end{array}\right),$$
that is the map
\begin{eqnarray*}\xi&\mapsto& \left(\begin{array}{cc}0&-u^*\\u&0\end{array}\right)\left(\begin{array}{c}v(\xi)_p\\v(\xi)_q\end{array}\right)-v\left(\begin{array}{cc}0&-u^*\\u&0\end{array}\right)\left(\begin{array}{c}\xi_p\\\xi_q\end{array}\right)\\
&&=\left(\begin{array}{c}-u^*(v(\xi)_q)+v(u^*(\xi_q))\\u(v(\xi)_p)-v(u(\xi_p))\end{array}\right),
\end{eqnarray*}
which gives the result.
\end{proof}
\textbf{Acknowledgments:} P. Bayard was supported by the project PAPIIT-UNAM IA105116. Section \ref{section fundamental theorem} and the study of $Sol_3$ are part of B. Zavala Jimenez's PhD; she thanks CONACYT for support.

\end{document}